\author{Zhiyuan Chen}
\address{Department of Mathematics, Princeton University, Princeton, NJ, 08544-1000, USA}
\email{zc5426@princeton.edu}
\title{Stable degeneration of families of klt singularities with constant local volume}
\theoremstyle{plain}
\newtheorem{thm}{Theorem}[section]
\newtheorem*{thm*}{Theorem}
\newtheorem{lem}[thm]{Lemma}
\newtheorem{cor}[thm]{Corollary}
\theoremstyle{definition}
\newtheorem{defn}[thm]{Definition}
\newtheorem{exmp}[thm]{Example}
\newtheorem{prg}[thm]{}
\newtheorem{rem}[thm]{Remark}
\let\originalleft\left
\let\originalright\right
\renewcommand{\left}{\mathopen{}\mathclose\bgroup\originalleft}
\renewcommand{\right}{\aftergroup\egroup\originalright}
\renewcommand{\subset}{\subseteq}
\begin{document}

\begin{abstract}
    For a klt singularity, C. Xu and Z. Zhuang \cite{XZ_stable_deg} proved the associated graded algebra of a minimizing valuation of the normalized volume function is finitely generated, finishing the proof of the stable degeneration conjecture proposed by C. Li and C. Xu. We prove a family version of the stable degeneration: for a locally stable family of klt singularities with constant local volume, the ideal sequences of the minimizing valuations for the normalized volume function form families of ideals with flat cosupport, which induce a degeneration to a locally stable family of K-semistable log Fano cone singularities. In the proof, we give a method to construct families of Koll\'ar models, which are a crucial tool introduced by Xu--Zhuang to prove finite generation for valuations of higher rational rank. 
\end{abstract}

\maketitle

\tableofcontents

\section{Introduction}

Along with recent developments of the algebraic theory of K-stability of Fano varieties, an analogous K-stability theory for klt singularities---the local counterparts of Fano varieties that are fundamental in the MMP (minimal model program)---is studied, through the normalized volume function introduced in \cite{Li_vol}. For a klt singularity $x \in (X, \Delta)$, the normalized volume of a real valuation $v \in \mathrm{Val}_{X,x}$ centered at $x$ is 
\begin{equation*}
    \widehat{\mathrm{vol}}_{X, \Delta}(v) \coloneqq A_{X,\Delta}(v)^{\dim X} \cdot \mathrm{vol}_X(v), 
\end{equation*}
where $A_{X,\Delta}(v)$ denotes the log discrepancy, and $\mathrm{vol}_X(v)$ denotes the volume introduced in \cite{ELS}. The infimum $\widehat{\mathrm{vol}}(x; X, \Delta) \coloneqq \inf_v \widehat{\mathrm{vol}}_{X, \Delta}(v)$ is called the local volume of the klt singularity $x \in (X, \Delta)$. 

The local K-stability theory is particularly interested in the minimizer of $\widehat{\mathrm{vol}}$, that is, a real valuation $v^{\mathrm{m}} \in \mathrm{Val}_{X,x}$ such that $\widehat{\mathrm{vol}}(x; X, \Delta) = \widehat{\mathrm{vol}}_{X, \Delta}(v^{\mathrm{m}})$, as highlighted in the \emph{Stable Degeneration Conjecture} proposed in \cite[Conj.\ 7.1]{Li_vol} and \cite[Conj.\ 1.2]{LX_stability}: There exists a minimizer $v^{\mathrm{m}}$ of $\widehat{\mathrm{vol}}_{X, \Delta}$, which is unique up to scaling and is quasi-monomial; the associated graded algebra $R_0 = \mathrm{gr}_{v^{\mathrm{m}}}(\mathscr{O}_{X,x})$ is finitely generated, and induces a klt degeneration $(X_0 = \mathop{\mathrm{Spec}}(R_0), \Delta_0)$, which is a K-semistable log Fano cone singularity under the natural torus action, and is the unique such K-semistable degeneration. 

The Stable Degeneration Conjecture has been proved is a series of works: The existence, uniqueness, and quasi-monomial property of a minimizer are proved in \cite{Blu_exist}, \cite{XZ_unique}, \cite{Xu_qm}; the K-semistability of the degeneration is proved in \cite{LX_stability} assuming finite generation; and finally, the finite generation is proved in \cite{XZ_stable_deg}. See \cite{LLX_guide} and \cite{Zhuang_survey} for surveys and more comprehensive references. 

The minimizing valuation $v^{\mathrm{m}}$, or the K-semistable degeneration, is associated with a klt singularity canonically. Then we ask whether the K-semistable degenerations for a family of klt singularities form a family. We consider locally stable families in the sense of \cite{Kol_fam}. As the local volume function $\widehat{\mathrm{vol}}$ is constructible and lower semi-continuous by \cite{Xu_qm} and \cite{BL_lsc}, we consider families of klt singularities with constant local volume. 

Instead of the minimizing valuation $v^{\mathrm{m}} \in \mathrm{Val}_{X,x}$ itself, we consider the ideal sequence $\mathfrak{a}_{\bullet} = \{\mathfrak{a}_{\lambda}\}_{\lambda \in \mathbb{R}_{\geq 0}}$ associated with it, where $\mathfrak{a}_{\lambda} \coloneqq \{ f : v^{\mathrm{m}}(f) \geq \lambda \} \subset \mathscr{O}_{X,x}$. Note that the ideal sequence is used to define the volume and the associated graded algebra. We will prove the following: 

\begin{thm}[Theorem \ref{K-semistable_degen_over_semi-normal}] \label{main_thm_1}
    Let $S$ be a semi-normal scheme essentially of finite type over a field of characteristic zero. Let $\pi \colon (X, \Delta) \to S$ be a locally stable family of klt pairs, with a section $x \colon S \to X$ of $\pi$, such that
    \begin{equation*}
        s \mapsto \widehat{\mathrm{vol}}(x_{s}; X_{s}, \Delta_{s})
    \end{equation*}
    is a locally constant function on $S$, where $(X_s, \Delta_s)$ is the fiber over a point $s \in S$, and $x_s = x(s) \in X_s$.\footnote{We will say $\pi \colon (X, \Delta) \to S$ with $x \in X(S)$, meaning an $S$-point of the $S$-scheme $X$, is a locally stable family of klt singularities. See Section \ref{sec: preliminaries} for our notations.} Suppose $v^{\mathrm{m}}_s \in \mathrm{Val}_{X_s, x_s}$ is a minimizer of the normalized volume function for $x_s \in (X_s, \Delta_s)$, scaled such that $A_{X_s, \Delta_s}(v^{\mathrm{m}}_s) = 1$ for all $s \in S$. Then there is an ideal sequence $\mathfrak{a}_{\bullet} \subset \mathscr{O}_X$ cosupported at $x(S) \subset X$ such that the following hold: 
    \begin{enumerate}[label=\emph{(\arabic*)}, nosep]
        \item $\mathfrak{a}_{\lambda}/\mathfrak{a}_{>\lambda}$ is flat over $S$ for all $\lambda \geq 0$. 
        \item For every $s \in S$, $\mathfrak{a}_{s, \bullet} \coloneqq \{ \mathfrak{a}_{\lambda} \mathscr{O}_{X_s} \}_{\lambda}$ is the ideal sequence on $X_s$ associated with $v_s^{\mathrm{m}}$. 
        \item Let $X_0 = \mathop{\mathrm{Spec}_S}(\mathrm{gr}_{\mathfrak{a}}(\mathscr{O}_X))$, where 
        \begin{equation*}
            \mathrm{gr}_{\mathfrak{a}}(\mathscr{O}_X) \coloneqq \bigoplus_{\lambda \geq 0} \mathfrak{a}_{\lambda}/\mathfrak{a}_{>\lambda}, 
        \end{equation*}
        then the canonical morphism $\pi_0 \colon X_0 \to S$ is flat, of finite type, with normal and geometrically integral fibers. The grading induces an action of a torus $\mathbb{T} \simeq (\mathbb{G}_{\mathrm{m},S})^r$ on $X_0$, whose fixed locus is the image of a section $x_0 \colon S \to X$. 
        \item There exists a $\mathbb{T}$-invariant effective $\mathbb{Q}$-divisor $\Delta_0$ on $X_0$ such that $\pi_0 \colon (X_0, \Delta_0) \to S$ is a locally stable family of K-semistable log Fano cone singularities, and $(X_{0,s}, \Delta_{0,s})$ is the degeneration of $x_s \in (X_s, \Delta_s)$ induced by $v_s^{\mathrm{m}}$ for every $s \in S$. 
    \end{enumerate}
\end{thm}

Our proof uses the theory of \emph{Koll\'ar models} developed in \cite{XZ_stable_deg}, which is a higher rank analogue of Koll\'ar components introduced in \cite{Xu_KP}. In \cite{XZ_stable_deg}, it is shown that the minimizing valuation $v^{\mathrm{m}}$ occurs on a Koll\'ar model, and every such quasi-monomial valuation has a finitely generated associated graded algebra which induces a klt degeneration. We will generalize these results to families of klt singularities. The key is to construct a family of Koll\'ar models that accommodates the minimizing valuation on each fiber, with the same coordinates when the dual complexes of different fibers are identified: 

\begin{thm}[Theorem \ref{construct_Kol_model_of_minimizer_over_DVR}] \label{main_thm_2}
    Let $S = \mathop{\mathrm{Spec}}(A)$, where $A$ is a DVR essentially of finite type over a field of characteristic zero, with the generic point $\eta \in S$ and the closed point $s \in S$. Let $\pi \colon (X, \Delta) \to S$ with $x \in X(S)$ be a locally stable family of klt singularities such that
    \begin{equation*}
        \widehat{\mathrm{vol}}(x_{\eta}; X_{\eta}, \Delta_{\eta}) = \widehat{\mathrm{vol}}(x_{s}; X_{s}, \Delta_{s}). 
    \end{equation*}
    Suppose $v^{\mathrm{m}}_{\eta} \in \mathrm{Val}_{X_{\eta}, x_{\eta}}$ and $v^{\mathrm{m}}_s \in \mathrm{Val}_{X_s, x_s}$ are minimizers of the normalized volume for $x_{\eta} \in (X_{\eta}, \Delta_{\eta})$ and $x_s \in (X_s, \Delta_s)$, respectively, scaled such that $A_{X_{\eta}, \Delta_{\eta}}(v^{\mathrm{m}}_{\eta}) = A_{X_s, \Delta_s}(v^{\mathrm{m}}_s)$. Then there exists a locally stable family of Koll\'ar models $f \colon (Y, E) \to (X, \Delta)$ at $x$ over $S$ such that 
    \begin{equation*}
        v^{\mathrm{m}}_{\eta} \in \mathrm{QM}(Y_{\eta}, E_{\eta}) \quad \text{and} \quad v^{\mathrm{m}}_{s} \in \mathrm{QM}(Y_{s}, E_{s}), 
    \end{equation*}
    and they are identified under the canonical isomorphism $\mathrm{QM}(Y_{\eta}, E_{\eta}) \simeq \mathrm{QM}(Y_{s}, E_{s})$ in \ref{quasi-monomial_deg_val}. 
\end{thm}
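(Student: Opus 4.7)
The plan is to build the desired family of Koll\'ar models by starting from a Koll\'ar model on the generic fiber, spreading it out over $S$ so that the dual complexes of the two fibers become canonically identified through matching strata, and then using the equal local volume and equal log discrepancy hypotheses to force the specialization of $v^{\mathrm{m}}_\eta$ to coincide with $v^{\mathrm{m}}_s$ on the special fiber. A final relative MMP step will turn this birational model into a genuine family of Koll\'ar models.

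First, apply the pointwise existence result of \cite{XZ_stable_deg} to $(X_\eta, \Delta_\eta)$ to obtain a Koll\'ar model $f_\eta \colon (Y_\eta, E_\eta) \to (X_\eta, \Delta_\eta)$ with $v^{\mathrm{m}}_\eta \in \mathrm{QM}(Y_\eta, E_\eta)$. Then spread $f_\eta$ out to a proper birational model $f \colon Y \to X$ over $S$ by taking Zariski closures of the components of $E_\eta$, and further log-resolve so that $(Y, E + f^{-1}_* \Delta)$ is a locally stable family of log smooth pairs over $S$. Since $A$ is a DVR, the strata of $E$ specialize bijectively from $E_\eta$ to $E_s$, giving a canonical piecewise-linear isomorphism $\mathrm{QM}(Y_\eta, E_\eta) \simeq \mathrm{QM}(Y_s, E_s)$ under which log discrepancies of corresponding QM valuations agree.

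Let $v_0 \in \mathrm{QM}(Y_s, E_s)$ be the valuation corresponding to $v^{\mathrm{m}}_\eta$ under this identification. By construction $A_{X_s, \Delta_s}(v_0) = A_{X_\eta, \Delta_\eta}(v^{\mathrm{m}}_\eta) = A_{X_s, \Delta_s}(v^{\mathrm{m}}_s)$. The crucial step is to prove $\widehat{\mathrm{vol}}_{X_s, \Delta_s}(v_0) = \widehat{\mathrm{vol}}_{X_\eta, \Delta_\eta}(v^{\mathrm{m}}_\eta)$; the plan is to compare the ideal sequence $\mathfrak{a}_\lambda(v_0)$ on $X_s$ with the restriction of $\mathfrak{a}_\lambda(v^{\mathrm{m}}_\eta)$ across the family, using flatness and local stability of $\pi$ together with the constant local volume. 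Combined with the chain of equalities $\widehat{\mathrm{vol}}_{X_s, \Delta_s}(v^{\mathrm{m}}_s) = \widehat{\mathrm{vol}}(x_s; X_s, \Delta_s) = \widehat{\mathrm{vol}}(x_\eta; X_\eta, \Delta_\eta) = \widehat{\mathrm{vol}}_{X_\eta, \Delta_\eta}(v^{\mathrm{m}}_\eta)$, this shows $v_0$ is a minimizer on $(X_s, \Delta_s)$, and uniqueness up to scaling (combined with the common log discrepancy normalization) forces $v_0 = v^{\mathrm{m}}_s$.

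Finally, run a relative MMP over $X/S$ to contract the components of $E$ not needed to host $v^{\mathrm{m}}_\eta = v^{\mathrm{m}}_s$, producing a locally stable family of Koll\'ar models. The main obstacle will be the volume comparison above — showing that the QM valuation specialized by matched weights remains a minimizer on the special fiber — since this requires controlling upper semi-continuity of fiberwise colengths of $\mathfrak{a}_\lambda$ against the assumption of constant $\widehat{\mathrm{vol}}$, and cannot be reduced to a purely pointwise application of \cite{XZ_stable_deg}. A secondary difficulty is ensuring that the relative MMP terminates with a model whose both fibers are Koll\'ar, which should follow from a family version of the Xu-Zhuang finite generation argument together with the preservation of the (now identified) minimizer through the MMP.
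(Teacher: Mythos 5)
Your overall strategy — spread a model hosting $v^{\mathrm{m}}_\eta$ out over $S$, degenerate the quasi-monomial weights to the special fiber, argue by volume that the degeneration is $v^{\mathrm{m}}_s$, and then massage the model — does overlap with the paper's outline in broad strokes, but there are two significant problems, and you have the relative hardness of the steps backwards.

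First, the volume comparison you flag as ``the main obstacle'' is in fact the easier part, and the paper handles it cleanly. Set $I_\bullet = j_*\mathfrak{a}_\bullet(v^{\mathrm{m}}_\eta)\cap\mathscr{O}_X$, then $\mathscr{O}_X/I_\lambda$ is automatically flat over the DVR (torsion-free), so $\mathrm{e}(I_{\eta,\bullet})=\mathrm{e}(I_{s,\bullet})$. Combining lower semi-continuity of the lc threshold with the constant-volume hypothesis shows $I_{s,\bullet}$ computes $\widehat{\mathrm{vol}}(x_s;X_s,\Delta_s)$, and Lemma~\ref{minimizing_ideal_seq}(3) (the BLQ uniqueness-up-to-saturation statement) forces $\widetilde{I}_{s,\bullet}=\mathfrak{a}_\bullet(v^{\mathrm{m}}_s)$. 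Alternatively, in your setup one can use Lemma~\ref{degenerate_monomial_val} to get the containment $I_{s,\lambda}\subset\mathfrak{a}_\lambda(v_0)$ and conclude $\mathrm{vol}(v_0)\le\mathrm{vol}(v^{\mathrm{m}}_\eta)$ directly. Either way this step is controlled. But do note your identification $\mathrm{QM}(Y_\eta,E_\eta)\simeq\mathrm{QM}(Y_s,E_s)$ is not canonical a priori: after spreading out and log-resolving, the intersection stratum $Z_s=Z\cap Y_s$ can be reducible, and the degeneration of a monomial valuation from Definition~\ref{quasi-monomial_deg_val} depends on a choice of generic point $z$ of $Z_s$. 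In the paper this choice only becomes canonical a posteriori, via uniqueness of the minimizer.

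Second, and this is the genuine gap: the step you label a ``secondary difficulty'' — getting from a family of models with lc special fiber to an actual locally stable family of Koll\'ar models (i.e., with qdlt fibers) — is precisely the heart of the theorem, and running a relative MMP over $X/S$ does not by itself produce it. An MMP can contract extraneous divisors and give an anti-canonically ample model, but it will not automatically kill the extra lc centers on $(Y_s,E_s)$ that do not contain $z$; the special fiber can remain lc and not qdlt. The paper's mechanism for this is the perturbation argument of Lemmas~\ref{special_val_equiv_conditions} and~\ref{construct_family_of_Kol_model}: because $v^{\mathrm{m}}_\eta$ and $v^{\mathrm{m}}_s$ are \emph{special} valuations, one can perturb $\Delta$ to $\Delta+\epsilon D$ and still have them compute the lct of $I_\bullet$; viewing $v^{\mathrm{m}}_s$ as a limit of weighted blow-up valuations of $v^{\mathrm{m}}_\eta$ and the discrete valuation of $S$ (as the $S$-weight tends to $\infty$), one constructs a special $\mathbb{Q}$-complement making the whole relevant simplicial cone $\Sigma\subset\mathrm{QM}_z(Y,E+f^*\pi^*H)$ consist of monomial lc places, and then Lemma~\ref{special_cone_give_qdlt_Fano_type_model} produces a qdlt anti-canonical model. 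Without this special-complement machinery your proposal does not reach a family of Koll\'ar models, and I do not see how the ``family version of the Xu-Zhuang finite generation argument'' you invoke would substitute for it.

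In short: the volume argument you worry about works, but you are missing the key construction (perturbation of the boundary plus the special $\mathbb{Q}$-complement for the cone $\Sigma$) that forces the qdlt condition on the special fiber, and without it the proof does not close.
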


Theorem \ref{main_thm_1} implies a representability for semi-normal schemes: 

\begin{thm}[Corollary \ref{representability}]
    Let $S$ be a reduced scheme that is essentially of finite type over a field of characteristic zero, and $\pi \colon (X, \Delta) \to S$ with $x \in X(S)$ be a locally stable family of klt singularities. Then there exists a locally closed stratification $\bigsqcup_{i} S_{i} \to S$ satisfying the following condition: If $g \colon T \to S$ is a morphism of schemes where $T$ is semi-normal, then the base change $\pi_T \colon (X_T, \Delta_T) \to T$ with $x_T \in X_T(T)$ admits a degeneration to a locally stable family of K-semistable log Fano cone singularities over $T$ if and only if $g$ factors through some  $S^{\mathit{sn}}_i \to S$, where $S^{\mathit{sn}}_i$ is the semi-normalization of $S_i$. 
\end{thm}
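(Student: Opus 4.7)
The plan is to take the stratification by the constructibility of the local volume function, and prove the two directions by invoking Theorem \ref{main_thm_1} together with the fiberwise uniqueness of K-semistable degenerations.

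By constructibility \cite{Xu_qm} and lower semi-continuity \cite{BL_lsc} of $\widehat{\mathrm{vol}}$, the distinct values $c_1, \ldots, c_N$ it takes on $S$ yield a finite locally closed stratification $S = \bigsqcup_i S_i$ with $S_i = \{ s \in S : \widehat{\mathrm{vol}}(x_s; X_s, \Delta_s) = c_i \}$, each endowed with the reduced induced structure. These are the strata I propose.

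For the ``if'' direction, suppose $g \colon T \to S$ factors through $S_i^{\mathit{sn}} \to S$ with $T$ semi-normal. Then the base change $\pi_T$ is a locally stable family of klt singularities with constant local volume $c_i$ on $T$, and Theorem \ref{main_thm_1} directly produces the desired equivariant degeneration to a locally stable family of K-semistable log Fano cone singularities.

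For the ``only if'' direction, I would first show that the local volume function is locally constant on $T$. Fiberwise, the hypothesis gives at each $t \in T$ an equivariant degeneration of $x_{T,t} \in (X_{T,t}, \Delta_{T,t})$ to a K-semistable log Fano cone $x_{T,0,t} \in (\mathcal{X}_{T,0,t}, \Delta_{T,0,t})$; by the uniqueness part of the Stable Degeneration Conjecture, this must coincide with the canonical K-semistable degeneration of the fiber, whence $\widehat{\mathrm{vol}}(x_{T,t}) = \widehat{\mathrm{vol}}(x_{T,0,t})$. For the locally stable family $\mathcal{X}_{T,0} \to T$ of K-semistable log Fano cones with its fixed compatible torus action, the volume is locally constant on $T$, since it is computed by the K-semistable Reeb vector in the fixed cocharacter lattice via the explicit Reeb-field formula, and the torus-graded Hilbert series is flat along the family. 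Hence $g^*\widehat{\mathrm{vol}}$ is locally constant on $T$, and on each connected component $g$ lands set-theoretically in a single $S_i$. Since $T$ is reduced, $g$ factors scheme-theoretically through $S_i$; and since the pullback $T \times_{S_i} S_i^{\mathit{sn}} \to T$ is a finite universal homeomorphism inducing isomorphisms on residue fields, the semi-normality of $T$ forces it to be an isomorphism, producing the desired factorization through $S_i^{\mathit{sn}}$.

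The main technical obstacle will be the local constancy of $\widehat{\mathrm{vol}}$ on the central family $\mathcal{X}_{T,0} \to T$: one must combine the preservation of volume under the fiberwise K-semistable degeneration (which is standard from the Stable Degeneration Conjecture) with a family version of the fact that volume is constant in a locally stable family of K-semistable log Fano cones with a fixed torus action. Once this is in hand, everything else is a clean consequence of Theorem \ref{main_thm_1} and the universal property of semi-normalization.
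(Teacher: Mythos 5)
Your proposal matches the paper's proof: both stratify $S$ by the level sets of the constructible, lower semicontinuous function $s \mapsto \widehat{\mathrm{vol}}(x_s;X_s,\Delta_s)$, deduce the ``if'' direction from Corollary \ref{K-semistable_degen_over_semi-normal}, and obtain the ``only if'' direction from the observation that the existence of a K-semistable log Fano cone degeneration over $T$ forces $\widehat{\mathrm{vol}}$ to be locally constant (plus the universal property of semi-normalization). The paper's own proof is terser, but your fuller sketch of the ``only if'' direction via fiberwise uniqueness of the K-semistable degeneration and flatness of the torus-graded Hilbert series is the intended argument.
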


The local volume is a measurement of the singularity (see, for example, \cite{LX_cubic}). We can compare the condition of constant local volume with the classical theory of equisingular plane curves: 

\begin{exmp}
    Let $\Bbbk$ be an algebraically closed field of characteristic $0$. For a plane curve $C \subset \mathbb{A}^2_{\Bbbk}$ that is (singular and) unibranch at the origin $0 \in \mathbb{A}^2$, one can associate a tuple $(a, b, \ldots)$ of positive integers, called the \emph{Puiseux characteristic}. Two such curves are \emph{equisingular} (at $0$) if and only if they have the same Puiseux characteristics (see \cite{Zar_curve}, \cite{Zar_equisingular_I}, \cite{Zar_equisingular_III}). 

    For $0 \leq \lambda < \frac{1}{a} + \frac{1}{b}$, the pair $(\mathbb{A}^2, \lambda C)$ is klt at $0 \in \mathbb{A}^2$. Moreover, in Lemma \ref{local_vol_of_unibranch_curve} we show that
    \begin{equation*}
        \widehat{\mathrm{vol}}(0; \mathbb{A}^2, \lambda C) = \left\{ \begin{array}{ll}
            ab\left(\frac{1}{a} + \frac{1}{b} - \lambda\right)^2 & \text{if}\ \frac{1}{a} - \frac{1}{b} \leq \lambda < \frac{1}{a} + \frac{1}{b},  \\
            4(1-\lambda a) & \text{if}\ 0 \leq \lambda < \frac{1}{a} - \frac{1}{b}. 
        \end{array} \right. 
    \end{equation*}
    In particular, the local volume of $0 \in (\mathbb{A}^2, \lambda C)$ only depends on $\lambda$ and the first two terms of the Puiseux characteristic of $C \subset \mathbb{A}^2$ at $0$. Therefore, Theorem \ref{main_thm_1} applies to families of equisingular unibranch plane curves; see Corollary \ref{family_of_equisingular_curve_branch}. 
\end{exmp}

\subsection*{Outline of the proof} We sketch the proof of Theorem \ref{main_thm_2}. 

The base $S$ is the spectrum of a DVR, so we can degenerate the ideal sequence $\mathfrak{a}_{\bullet}(v^{\mathrm{m}}_{\eta})$ on the generic fiber $X_{\eta}$ to an ideal sequence $I_{\bullet}$ on $X$ with flat cosupport over $S$. Then $I_{\bullet}$ has constant multiplicity over $S$; by the lower semi-continuity of the lc thresholds, the closed fiber $I_{s,\bullet}$ computes $\widehat{\mathrm{vol}}(x_s; X_{s}, \Delta_{s})$. By a stronger uniqueness of minimizer implicitly in \cite{BLQ_filt}, we conclude that $\mathfrak{a}_{s,\bullet}$ is the saturation of $I_{s,\bullet}$; see Lemma \ref{minimizing_ideal_seq}. 

Note that $v_{\eta}^{\mathrm{m}}$ computes the lc threshold of $I_{\eta, \bullet} = \mathfrak{a}_{\bullet}(v^{\mathrm{m}}_{\eta})$ on $(X_{\eta}, \Delta_{\eta})$. Then we can construct a model $f \colon (Y, E) \to (X, \Delta)$ relatively of Fano type over $X$ such that $v_{\eta}^{\mathrm{m}} \in \mathrm{QM}(Y_{\eta}, E_{\eta})$, by a similar argument that has been used in \cite[Thm.\ 4.2]{LX_stability}, \cite[Thm.\ 4.48]{Xu_K-Stability_Book}, and \cite[Lem.\ 3.2]{XZ_stable_deg}; see Lemma \ref{specialize_val_computes_constant_lct}. Since $I_{\bullet}$ has constant lc threshold, $\pi \circ f \colon (Y, f^{-1}_{*}\Delta + E) \to S$ is a locally stable family, so we can find a real valuation $v_s \in \mathrm{Val}_{X_s, x_s}$ that is monomial on $(Y_s, E_s)$, corresponding to $v_{\eta}^{\mathrm{m}}$ as in \ref{quasi-monomial_deg_val}. Since $v_s$ computes the lc thresholds of $I_{s, \bullet}$ on $(X_s, \Delta_s)$, so we can conclude $v_s = v^{\mathrm{m}}_s$ by the uniqueness. 

Now the closed fiber $(Y_s, E_s)$ is slc (semi-log canonical) by the construction, whereas we would like it to be qdlt so that we can get a family of Koll\'ar models. The key is that $v^{\mathrm{m}}_{\eta}$ and $v^{\mathrm{m}}_{s}$ are \emph{special valuations}, namely, monomial lc places of special $\mathbb{Q}$-complements, so they compute the lc threshold of $I_{\bullet}$ after any small perturbation of $\Delta$ to $\Delta + \epsilon D$; see Lemma \ref{special_val_equiv_conditions}. A heuristic is that such perturbations exclude any lc center on $(Y_s, E_s)$ that does not contain the center of $v^{\mathrm{m}}_s$. This method was used in the proof of \cite[Prop.\ 4.6]{XZ_stable_deg}. In our situation, however, the model $(Y, E)$ constructed above may change when we perturb the boundary. 

Instead, note that $v^{\mathrm{m}}_s$ can be viewed as the limit of the weighted blow-up valuations of $v^{\mathrm{m}}_{\eta}$ and the discrete valuation $v_0$ induced by $X_s$, when the weight of $v_0$ goes to $+\infty$. Then the perturbation allows us to find a special $\mathbb{Q}$-complement for which all these weighted blow-ups are monomial lc places. Hence we get a model of qdlt Fano type model accommodating them by \cite[Thm.\ 3.14]{XZ_stable_deg}, whose anti-canonical model is a locally stable family of Koll\'ar models as desired. 

\subsection*{Acknowledgment} I would like to thank my advisor Chenyang Xu for suggesting me this topic, and for constant support and guidance on this research. I would also like to thank Junyao Peng and Lu Qi for inspiring discussions. 

\section{Preliminaries} \label{sec: preliminaries}

\subsection{Birational geometry and singularities}

All schemes that we consider will be quasi-compact, separated and essentially of finite type over a field of characteristic zero. 

A \emph{pair} $(X, \Delta)$ consists of a reduced, equidimensional scheme $X$ with a Weil $\mathbb{Q}$-divisor $\Delta$. We further assume that $X$ satisfies Serre's condition $(\mathrm{S}_2)$, satisfies $(\mathrm{G}_1)$ (namely, Gorenstein in codimension $1$), and is regular at the generic point of every component $\Delta$; see \cite[Def.\ 1.5]{Kollar_singularity}. Let $K_X$ denote a canonical divisor on $X$. The pair $(X, \Delta)$ is said to be \emph{local} (resp., \emph{semi-local}) if $X$ is the spectrum of a local (resp., semi-local) ring. 

A \emph{singularity} $x \in (X, \Delta)$ consists of a pair $(X, \Delta)$ that is essentially of finite type over a field $k$ of characteristic $0$, and a $k$-point $x \in X(k)$.\footnote{It is possible to consider arbitrary closed points, and on more general excellent schemes. But many theorems for the normalized volume are only proved in this case in the literature, sometimes even assuming $k$ is algebraically closed. Also, requiring $x \in X(k)$ is compatible with our definition that a family of singularities is given by a section.} Since we are interested in the properties near the point $x$ for a singularity $x \in (X, \Delta)$, we always assume $X$ is affine, and sometimes localize at $x$ if necessary. 

We follow the standard terminology about singularities of pairs in \cite{Kollar_singularity} when $\Delta \geq 0$, but add the prefix ``sub-'' when $\Delta$ is not necessarily effective, e.g., \emph{sub-lc}, \emph{sub-klt}. We also need the notion of \emph{slc} pairs (short for \emph{semi-log canonical}, see \cite[\S 5]{Kollar_singularity}), which is a non-normal variant of lc pairs. 

\subsubsection{Qdlt pairs} We recall the concept of \emph{qdlt} pairs introduced in \cite{dFKX}. 

\begin{defn}[cf.\ {\cite[Def.\ 35]{dFKX}}, {\cite[Def.\ 2.1, 2.2]{XZ_stable_deg}}] \label{def_qdlt}
    A pair $(X, \Delta)$ is said to be \emph{simple-toroidal} if $\Delta$ is reduced and every $x \in X$ has an open neighborhood $U \subset X$ such that $(U, \Delta|_U) \simeq (V, D)/G$, where $(V, D)$ is an snc pair, and $G$ is a finite abelian group acting on $V$ that preserves each component of $D$ and acts freely on $V \smallsetminus D$. 
    
    A pair $(X, \Delta)$ is said to be \emph{qdlt} if it is lc, and $(\mathop{\mathrm{Spec}} \mathscr{O}_{X,z}, \Delta|_{\mathop{\mathrm{Spec}} \mathscr{O}_{X,z}})$ is simple-toroidal for every lc center $z \in (X, \Delta)$. 
\end{defn}

\begin{lem}[{\cite[Prop.\ 34]{dFKX}}] \label{simple_toroidal}
    Let $(X, \Delta)$ be a local slc pair of dimension $r$. Suppose $D_1, \ldots, D_r$ are reduced $\mathbb{Q}$-Cartier divisors on $X$ such that $D_1 + \cdots + D_r \leq \Delta$. Then $(X, \Delta)$ is simple-toroidal, and $\Delta = D_1 + \cdots + D_r$, where each $D_i$ is normal and irreducible. 
\end{lem}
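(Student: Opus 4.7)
Work locally at the closed point $x \in X$ and begin with a component count. Writing each reduced $D_i = \sum_j D_{i,j}$ in its prime components through $x$, the inequality $\sum_i D_i \leq \Delta$ combined with the slc bound that every coefficient of $\Delta$ is at most $1$ forces all the primes $D_{i,j}$ to be distinct. I then claim each $D_i$ is itself prime at $x$: otherwise $\sum_i D_i$ would have at least $r + 1$ distinct prime components of coefficient $1$ through $x$. I would rule this out by the standard log discrepancy estimate on the ordinary blow-up $\pi \colon \mathrm{Bl}_x X \to X$: for $(X, \Delta)$ slc the log discrepancy of the exceptional divisor along $(X, \Delta)$ is at most $r - N$, where $N$ counts prime components of $\Delta$ through $x$ of coefficient $1$, forcing $N \leq r$.

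Next, since each $D_i$ is now prime and $\mathbb{Q}$-Cartier, I take a Galois abelian cover $\sigma \colon \tilde X \to X$ that trivializes the torsion Cartier class of every $D_i$ (the composite of the index-one covers associated with the $D_i$), with group $G$ \'etale away from $\sum D_i$. Setting $\tilde D_i \coloneqq (\sigma^* D_i)_{\mathrm{red}}$, each $\tilde D_i$ is Cartier on $\tilde X$, and the log pullback identity $\sigma^*(K_X + \Delta) = K_{\tilde X} + \tilde \Delta$ keeps $(\tilde X, \tilde \Delta)$ slc with $\sum \tilde D_i \leq \tilde \Delta$. Choosing $\tilde x \in \sigma^{-1}(x)$ and local equations $\tilde f_i$ for $\tilde D_i$ at $\tilde x$, the same log discrepancy estimate on $\mathrm{Bl}_{\tilde x}\tilde X$ yields $\mathrm{mult}_{\tilde x}(\sum \tilde D_i) \leq r$, and since each $\mathrm{mult}_{\tilde x}\tilde f_i \geq 1$ we get $\mathrm{mult}_{\tilde x}\tilde f_i = 1$ for every $i$. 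The heart of the argument is to upgrade this to: $\tilde f_1, \ldots, \tilde f_r$ form a regular system of parameters on $\mathscr{O}_{\tilde X, \tilde x}$, so $\tilde X$ is regular at $\tilde x$ and $(\tilde X, \sum \tilde D_i)$ is snc there.

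Granted this, descent gives the simple-toroidal structure on $(X, \sum D_i)$: the group $G$ preserves each $\tilde D_i = \sigma^{-1}(D_i)_{\mathrm{red}}$ by construction and acts freely on $\tilde X \smallsetminus \sum \tilde D_i$ because $\sigma$ is \'etale there. The equality $\Delta = \sum D_i$ follows because any additional prime component of $\Delta$ through $x$ would produce an $(r+1)$-th coefficient-$1$ component, contradicting $N \leq r$; the normality and irreducibility of each $D_i$ are transferred from $\tilde D_i$ through the $G$-action. \textbf{Main obstacle:} The crux is the smoothness of $\tilde X$ at $\tilde x$---passing from the multiplicity-one numerics to the assertion that $(\tilde f_1, \ldots, \tilde f_r) = \mathfrak{m}_{\tilde x}$. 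One expects this via an induction on $r$ using inversion of adjunction along $\tilde D_1$ (which reduces to the analogous statement on the slc pair $(\tilde D_1^\nu, \mathrm{Diff})$ with $r-1$ reduced Cartier divisors), but care is needed to ensure the restricted divisors remain reduced and the hypotheses of the induction are satisfied; this is where the slc condition on $(X, \Delta)$ is used most sharply.
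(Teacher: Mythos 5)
The paper itself gives no proof of this lemma; it is imported verbatim from \cite[Prop.~34]{dFKX}, so there is no in-paper argument to compare against, and your proposal must be judged against the reference. There you run into a genuine circularity: the ``log discrepancy estimate on the ordinary blow-up'' that you invoke twice (to get $N \le r$, and, after the index-one cover, to get $\operatorname{mult}_{\tilde x}(\sum_i \tilde D_i) \le r$) rests on the identity $A_X(E) = r$ for the exceptional divisor $E$ of $\operatorname{Bl}_x X$. That identity holds only when $X$ is regular at $x$ --- which is precisely part of the conclusion of the lemma. For a singular slc $X$ the discrepancy of the ordinary blow-up is not $r-1$, and there is no a priori upper bound on $A_{X}(E)$, so the inequality $A_{X,\Delta}(E) \le r - N$ is simply unavailable at the outset. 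Both of the ``soft'' numerical steps you treat as routine therefore already assume the smoothness you are trying to establish. Separately, as you acknowledge, the step from multiplicity-one to ``$(\tilde f_1, \dots, \tilde f_r)$ generate $\mathfrak m_{\tilde x}$'' is not carried out, and that is the entire content of the lemma.

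The repair is to run the induction on $r$ that you gesture at \emph{from the very beginning}, with no blow-up step. Localize at $x$, pass to the composite index-one cover $\sigma \colon \tilde X \to X$ so each $\tilde D_i = V(\tilde f_i)$ is Cartier, and note $(\tilde X, \tilde\Delta)$ remains slc with $\sum \tilde D_i \le \tilde\Delta$. Adjunction for demi-normal pairs along $\tilde D_1$ produces an slc pair $(\tilde D_1^{\nu}, \operatorname{Diff}_{\tilde D_1^\nu}(\tilde \Delta - \tilde D_1))$ of dimension $r-1$ equipped with $r-1$ reduced $\mathbb{Q}$-Cartier divisors $\tilde D_2|_{\tilde D_1^\nu}, \dots, \tilde D_r|_{\tilde D_1^\nu}$ dominated by the $\operatorname{Diff}$; the reducedness and distinctness of the restricted supports must themselves be established inside the induction rather than by an independent blow-up calculation. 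The inductive hypothesis then makes $\tilde D_1^\nu$ regular at the point over $\tilde x$ and identifies it with $\tilde D_1$, so $\mathscr O_{\tilde X,\tilde x}/(\tilde f_1)$ is a regular local ring of dimension $r-1$; since $\dim \mathscr O_{\tilde X, \tilde x} = r$, the elementary fact that a Noetherian local ring $R$ is regular of dimension $d$ whenever $R/(f)$ is regular of dimension $d-1$ gives regularity of $\tilde X$ at $\tilde x$. Snc-ness of $(\tilde X, \sum \tilde D_i)$, the equality $\Delta = \sum D_i$, and in particular the bound $N \le r$ then fall out as \emph{consequences} of this induction rather than having to be supplied in advance --- which is exactly how the circularity is avoided.
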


\subsubsection{Models} 

\begin{defn}
    Let $(X, \Delta)$ be a pair. A \emph{birational model} (or simply, a \emph{model}) 
    \begin{equation*}
        f \colon (Y, E) \to (X, \Delta)
    \end{equation*}
    for $(X, \Delta)$ consists of a pair $(Y, E)$ where $Y$ is a normal scheme and $E$ is a reduced Weil divisor on $Y$, and a projective birational morphism $f \colon Y \to X$. It is called a model at a point $x \in X$ if all components of $E$ are centered at $x$. 

    A model $f \colon (Y, E) \to (X, \Delta)$ is said to be \emph{snc} (resp., \emph{simple-toroidal}) if $\mathrm{Ex}(f)$ has pure codimension one in $Y$, and $(Y, \mathrm{Supp}(f^{-1}_{*}\Delta + \mathrm{Ex}(f) + E))$ is snc (resp., simple-toroidal). It is called a \emph{log resolution}\footnote{This seems to be a non-standard use of terminology as we allow log resolutions to be only toroidal, rather than snc. However, the theory of resolution of singularities works as usual. For example, we can take a log resolution that is a local isomorphism over the simple-toroidal locus (see \cite{ATW_log_resolution}). Anyway, toroidal pairs are logarithmic regular in the sense of logarithmic geometry (see \cite[\S2.3]{AT_toroidal}). } if moreover $E$ contains all components of $f^{-1}_{*}\Delta$ and $\mathrm{Ex}(f)$.
\end{defn}

\begin{lem}[cf. {\cite[Cor.\ 1.4.3]{BCHM}}] \label{extract_div}
    Let $(X, \Delta)$ be a klt pair, and $\mathfrak{a} \subset \mathscr{O}_X$ be a coherent ideal with $\mathrm{lct}(X, \Delta; \mathfrak{a}^c) \geq 1$ for some $c > 0$. Suppose $E_1, \ldots, E_r$ are exceptional prime divisors over $X$ such that 
    \begin{equation*}
        A_{X, \Delta + \mathfrak{a}^c}(E_i) \coloneqq A_{X, \Delta}(E_i) - c \mathop{\mathrm{ord}_{E_i}}(\mathfrak{a}) < 1
    \end{equation*}
    for all $i = 1, \ldots, r$. Then there exists a model $f \colon (Y, E) \to (X, \Delta)$ where $E = \sum_{i=1}^{r} E_i$ is the sum of all $f$-exceptional divisors. Moreover, we may assume $Y$ is $\mathbb{Q}$-factorial. 
\end{lem}

\begin{proof}
    Let $g \colon (W, E) \to (X, \Delta)$ be an snc model, where $E = \sum_{i=1}^{r} E_i$ is the sum of the given divisors, such that $g^{-1}\mathfrak{a} \cdot \mathscr{O}_W = \mathscr{O}_W(-A)$ for an effective divisor $A$ and $g^{-1}\Delta + \mathrm{Ex}(g) + A$ has snc support. The conditions $A_{X, \Delta + \mathfrak{a}^c}(E_i) < 1$ still hold if we replace $c$ with $c - \epsilon$ for some $\epsilon \ll 1$. Thus we may assume that $\mathrm{lct}(X, \Delta; \mathfrak{a}^c) > 1$. Let $F = \sum_{j=1}^s F_j$ be the sum of all $g$-exceptional divisors other than $E_1, \ldots, E_r$, and choose $0 < \delta \ll 1$ such that 
    \begin{equation*}
        A_{X, \Delta + \mathfrak{a}^c}(F_j) = A_{X, \Delta}(F_j) - c \mathop{\mathrm{ord}_{F_j}}(\mathfrak{a}) > \delta
    \end{equation*}
    for all $j = 1, \ldots, s$. Let $F' = \sum_{j=1}^{s} (A_{X, \Delta + \mathfrak{a}^c}(F_j) - \delta) F_j$. Suppose we can run a relative $F'$-MMP over $X$ with scaling of a $g$-ample divisor $H$ on $W$, which terminates with a relative minimal model $f \colon Y \to X$ such that $Y$ is $\mathbb{Q}$-factorial. Then the MMP $\phi \colon W \dashrightarrow Y$ contracts all the $F_j$ and none of the $E_i$, hence $f \colon Y \to X$ is the desired model. It remains to show the such an MMP exists and terminates. 
    
    First assume that $X$ is affine. Write $K_W + \Delta_W = g^{*}(K_X + \Delta)$, then $\Delta_W + cA$ has snc support, and $(W, \Delta_W + cA)$ is sub-klt. Let $A' = g^{-1}_{*}g_{*}A$ be the part of $A$ that is not $g$-exceptional, and 
    \begin{equation*}
        \Delta_W' = \Delta_W + cA + F' = g^{-1}_{*}\Delta + cA' + \sum_{i=1}^{r} (1 - A_{X, \Delta + \mathfrak{a}^c}(E_i)) E_i + (1 - \delta) F. 
    \end{equation*}
    Then $(W, \Delta_W')$ is klt. Note that $\mathscr{O}_W(-A)$ is relatively globally generated over $X$, hence globally generated since $X$ is affine. By the Bertini theorem, we can choose a general effective $\mathbb{Q}$-divisor $G \in |{-A}|_{\mathbb{Q}}$ such that $(W, \Delta_W' + cG)$ is also klt. Then we can run the relative $(K_W + \Delta_W' + cG)$-MMP over $X$ with scaling of $H$, which terminates with a $\mathbb{Q}$-factorial relative minimal model by \cite{BCHM}. Since 
    \begin{equation*}
        K_W + \Delta_W' + cG = K_W + \Delta_W + c(A + G) + F' \sim_{X, \mathbb{R}} F', 
    \end{equation*}
    the $(K_W + \Delta_W' + cG)$-MMP is the same as the relative $F'$-MMP with scaling of $H$. 

    In general, the MMP with scaling descends from a Zariski open covering by \cite[Thm.\ 2.5]{VP}. 
\end{proof}

\subsubsection{Locally stable families} We use the notion of locally stable families of pairs following \cite{Kol_fam}. 

Let $S$ be a reduced scheme. A (well-defined) family of pairs $\pi \colon (X, \Delta) \to S$ consists of a morphism of schemes $\pi \colon X \to S$ that is flat and of finite type, with reduced, equidimensional, $(\mathrm{S}_2)$ fibers, and a Weil $\mathbb{Q}$-divisor $\Delta$ on $X$ that has well-defined pullbacks (see \cite[Thm-Def.\ 4.3]{Kol_fam}). If $S$ is normal and $\pi$ has normal fibers of pure dimension $n$, then it suffices to assume that $\mathrm{Supp}(\Delta) \to S$ has pure relative dimension $n-1$. In general, we always assume components of $\Delta$ are \emph{relative Mumford divisors} on $X$ (see \cite[Def.\ 4.68]{Kol_fam}).  

A family of pairs $\pi \colon (X, \Delta) \to S$ is said to be \emph{locally stable} if the fibers $(X_s, \Delta_s)$ are slc for all $s \in S$, and $K_{X/S} + \Delta$ is $\mathbb{Q}$-Cartier (see \cite[Def-Thm.\ 4.7]{Kol_fam}). If moreover $(X_s, \Delta_s)$ are klt (resp., qdlt, lc) for all $s \in S$, we say $\pi \colon (X, \Delta) \to S$ is a locally stable family of klt (resp., qdlt, lc) pairs. 

If $S$ is normal, then a locally stable family of klt pairs $\pi \colon (X, \Delta) \to S$ is also called a \emph{$\mathbb{Q}$-Gorenstein family} of klt pairs, since the essential condition is that $K_{X/S} + \Delta$ being $\mathbb{Q}$-Carter. 

\begin{defn} \label{notation_family_of_sing}
    Let $S$ be a reduced scheme. A \emph{family of singularities} over $S$ consists of a (well-defined) family of pairs $\pi \colon (X, \Delta) \to S$ where $\pi$ is affine, together with a section $x \colon S \to X$ of $\pi$; we denote the section by $x \in X(S)$, meaning an $S$-point of the $S$-scheme $X$. Note that $x \colon S \to X$ is a closed immersion. In the fiber $(X_s, \Delta_s)$ at a point $s \in S$, the section $x$ gives a $\kappa(s)$-point $x_s \in X_s(\kappa(s))$, where $\kappa(s)$ is the residue field of $s \in S$. 

    A \emph{locally stable family of klt singularities} over $S$ consists of a family of singularities $\pi \colon (X, \Delta) \to S$ with $x \in X(S)$, such that $\pi \colon (X, \Delta) \to S$ is a locally stable family of klt pairs. 
\end{defn}

\begin{defn}
    Let $S$ be a reduced scheme, $\pi \colon (X, \Delta) \to S$ be a family of pairs. A \emph{family of models} 
    \begin{equation*}
        f \colon (Y, E) \to (X, \Delta)
    \end{equation*}
    consists of a projective birational morphism $f \colon Y \to X$ such that $\pi \circ f$ is flat, and a relative Mumford divisor $E$ on $Y/S$ (see \cite[Def.\ 4.68]{Kol_fam}), such that every fiber $f_s \colon (Y_s, E_s) \to (X_s, \Delta_s)$ is a model. 

    Let $x \in X(S)$ be a section. Then $f \colon (Y, E) \to (X, \Delta)$ is called a \emph{family of models at $x$} if moreover every fiber $f_s \colon (Y_s, E_s) \to (X_s, \Delta_s)$ is a model at $x_s$, that is, all components of $E_s$ are centered at $x_s$. 
\end{defn}

\subsection{Valuations}

\subsubsection{Real valuations} All valuations that we consider will be real valuations. 

Let $X$ be an integral scheme with function field $K(X)$. A \emph{real valuation} $v$ on $K(X)$ gives a valuation ring $\mathcal{O}_v$ of $K(X)$ of rank $1$ with an order-preserving embedding of the valuation group $\Gamma_{v} \hookrightarrow \mathbb{R}$, which recovers the valuation $v \colon K(X)^{\times} \to \mathbb{R}$. By convention, we set $v(0) = +\infty$. 

The \emph{rational rank} of $v \in \mathrm{Val}_X$ is $\mathop{\mathrm{rat.rank}}(v) \coloneqq \dim_{\mathbb{Q}}(\Gamma_v \otimes_{\mathbb{Z}} \mathbb{Q})$. 

The set of all real valuations on $K$ centered on $X$ is denoted by $\mathrm{Val}_{X}$. For $x \in X$, let 
\begin{equation*}
    \mathrm{Val}_{X,x} \coloneqq \{ v \in \mathrm{Val}_X : \mathrm{center}_X(v) = x \}. 
\end{equation*}
More generally, if $X = \bigcup_{i} X_i$ is a reduced scheme, where $X_i$ are the irreducible components of $X$, then we write $\mathrm{Val}_X \coloneqq \bigsqcup_{i} \mathrm{Val}_{X_i}$. 

Suppose $f \colon Y \to X$ is a dominant morphism, where $Y$ is an integral scheme, then we have an induced map $\mathrm{Val}_Y \to \mathrm{Val}_X$ by restricting a valuation on $K(Y)$ to the subfield $K(X)$ via $f$. If $f$ is birational and proper, then this is a bijection, and we usually identify $\mathrm{Val}_Y = \mathrm{Val}_X$. 

A real valuation $v \in \mathrm{Val}_X$ can be evaluated at functions, coherent ideals, and Cartier divisors on $X$, as well as their $\mathbb{R}$-linear combinations (e.g., $\mathbb{R}$-Cartier $\mathbb{R}$-divisors). 

\subsubsection{Quasi-monomial valuations}

Let $(Y, E = \sum_i E_i)$ be an snc pair. Suppose $y \in Y$ is a generic point of a stratum $E_1 \cap \cdots \cap E_r \subset Y$, and $t_1, \ldots, t_r \in \mathscr{O}_{Y,y}$ is a regular system of parameters such that $E_i$ is defined by $t_i$ near $y$ for all $i = 1, \ldots, r$. Then there is an isomorphism $\hat{\mathscr{O}}_{Y,y} \simeq \kappa(y)[\![t_1, \ldots, t_r]\!]$. 

For $\alpha = (\alpha_1, \ldots, \alpha_r) \in \mathbb{R}_{\geq 0}^{r}$, there exists a unique real valuation $v_{\alpha} \in \mathrm{Val}_Y$ satisfying the following: if $f \in \mathscr{O}_{Y,y}$ is written as $f = \sum_{m \in \mathbb{N}^r} c_{m}t_1^{m_1} \cdots t_r^{m_r}$ in $\hat{\mathscr{O}}_{Y,y} \simeq \kappa(y)[\![t_1, \ldots, t_r]\!]$, then 
\begin{equation*}
    v_{\alpha}(f) = \inf \{ \langle \alpha, m \rangle : c_{m} \neq 0 \} \in \mathbb{R}_{\geq 0} \cup \{+\infty\}, \quad \text{where}\ \langle \alpha, m \rangle = \sum_{i=1}^{r} \alpha_im_i; 
\end{equation*}
see \cite[Prop.\ 3.1]{JM}. We call $v_{\alpha}$ a \emph{monomial valuation} at $y \in (Y, E)$, and denote by $\mathrm{QM}_y(Y, E)$ the set of all monomial valuations at $y$. Finally, we define $\mathrm{QM}(Y, E) \coloneqq \bigcup_y \mathrm{QM}_y(Y, E)$, where $y$ ranges over all generic points of all strata of $(Y, E)$. 

Following \cite{XZ_stable_deg}, we consider a generalization of monomial valuations to \emph{simple-toroidal} pairs. 

\begin{defn}
    Let $(Y, E = \sum_{i} E_i)$ be a simple-toroidal pair. Suppose $y \in Y$ is a generic point of a stratum $\bigcap_{i=1}^{r} E_i \subset Y$. Then locally at $y$ there is a finite abelian cover 
    \begin{equation*}
        \mu \colon (Y', E') \to (Y, E)
    \end{equation*}
    which is \'etale away from $E$, such that $(Y', E' = \sum_i E'_i)$ is snc, and $\mu^{*}E_i = m_iE_i'$ for some $m_i > 0$. Let $y' \in \mu^{-1}(y)$, then $y'$ is a generic point of $\bigcap_{i=1}^{r} E_i' \subset Y'$. For $\alpha = (\alpha_1, \ldots, \alpha_r) \in \mathbb{R}_{\geq 0}^r$, define $v_{\alpha} \in \mathrm{Val}_Y$ to be the restriction of $v'_{\alpha'} \in \mathrm{QM}_{y'}(Y', E')$ corresponding to $\alpha' = (\alpha_1/m_1, \ldots, \alpha_r/m_r)$. Thus 
    \begin{equation*}
        v_{\alpha}(E_i) = v'_{\alpha'}(\mu^{*}E_i) = m_iv'_{\alpha'}(E_i') = \alpha_i. 
    \end{equation*}
    Note that $v_{\alpha}$ does not depend on the choice of a finite abelian cover. 
    
    We call $v_{\alpha}$ a \emph{monomial valuation} at $y \in (Y, E)$, and denote by $\mathrm{QM}_y(Y, E) \subset \mathrm{Val}_Y$ the set of all such $v_{\alpha}$. The \emph{relative interior} $\mathrm{QM}_y^{\circ}(Y, E) \subset \mathrm{QM}_y(Y, E)$ consists of $v_{\alpha}$ for which $\alpha \in \mathbb{R}_{>0}^r$. By definition there is a canonical bijection
    \begin{equation*}
        \mathrm{QM}_y(Y, E) \simeq \mathbb{R}_{\geq 0}^{r}, \quad v \mapsto (v(E_1), \ldots, v(E_r)), 
    \end{equation*}
    which is a homeomorphism when $\mathrm{Val}_Y$ is equipped with the weakest topology such that the evaluation map $\mathrm{Val}_Y \to \mathbb{R}$, $v \mapsto v(f)$ is continuous for all $f \in K(Y)^{\times}$ (see \cite[Lem.\ 4.5]{JM}). 
    
    More generally, let $(Y, E)$ be a pair where $E = \sum_{i} E_i$ is reduced, such that $(\mathop{\mathrm{Spec}} \mathscr{O}_{Y,y}, E|_{\mathrm{Spec} \mathscr{O}_{Y,y}})$ is simple-toroidal for all generic points $y$ of all strata $\bigcap_{i \in I} E_i \subset Y$. Then we define 
    \begin{equation*}
        \mathrm{QM}(Y, E) \coloneqq \bigcup\nolimits_{y} \mathrm{QM}_{y}(Y, E) = \bigsqcup\nolimits_{y} \mathrm{QM}^{\circ}_y(Y, E). 
    \end{equation*}
\end{defn}

\begin{rem}
    Let $(Y, E)$ be a pair where $E$ is a reduced Weil divisor. Then $(Y, E)$ is simple-toroidal if and only if $Y \smallsetminus E \hookrightarrow Y$ is a toroidal embedding without self-intersection such that the rational convex polyhedral cones $\sigma^Z \subset N^Z_{\mathbb{R}}$ associated to all strata $Z \subset Y$ are simplicial (see \cite[II.\S1]{KKMSD_toroidal} for the relevant notions). 

    In fact, if $(Y, E)$ is Zariski locally, at a point $y \in E$, isomorphic to the quotient $(V, D)/G$ where $(V, D)$ is snc and $G$ is a finite abelian group, then the action of $G$ on $(V, D)$ is formally isomorphic to an action of $G'$ on $\mathbb{A}^n$ where $G \simeq G' \subset (\mathbb{G}_{\mathrm{m}})^n$, and $D$ corresponds to a union of coordinate hyperplanes $D' \subset \mathbb{A}^n$ (see \cite[{}3.17]{Kollar_singularity}). Then $(\mathbb{A}^n, D')/G'$ is an affine toric variety associated to a simplicial cone, and $(Y, E)$ is formally isomorphic to $(\mathbb{A}^n, D')/G'$ at $y \in E$. That is, $Y \smallsetminus E \hookrightarrow Y$ is a toroidal embedding, and the associated cones are simplicial; it has no self-intersections since every stratum is normal. 
    
    Conversely, assume $Y \smallsetminus E \hookrightarrow Y$ is such a toroidal embedding, then every component of $E$ is $\mathbb{Q}$-Cartier (see \cite[II.\S1, Lem.\ 1]{KKMSD_toroidal}). To see $(Y, E)$ is simple-toroidal in the sense of Definition \ref{def_qdlt}, we may take a composition of cyclic covers, and assume every component of $E$ is Cartier. In this case, the cone $\sigma^Z \subset N^Z_{\mathbb{R}}$ is spanned by a basis of the lattice $N^Z$, hence $(Y, E)$ is snc. 

    Let $Z \subset Y$ be a stratum, with the generic point $z \in Z$. Recall that $\sigma^Z \subset N^Z_{\mathbb{R}}$ is the dual cone of the cone of effective Cartier divisors supported on $E$ and passing through $Z$ (see \cite[II.\S1, Def.\ 3]{KKMSD_toroidal}). Then there is a canonical isomorphism 
    \begin{equation*}
        \mathrm{QM}_z(Y, E) \simeq \sigma^Z
    \end{equation*}
    and $\sigma^Z \cap N^Z$ corresponds to the lattice generated by valuations given by prime divisors over $Y$. Hence we can equip $\mathrm{QM}(Y, E)$ with an integral structure (see \cite[71]{KKMSD_toroidal}). 

    By the results in \cite[II.\S2]{KKMSD_toroidal}, if $f \colon (Y', E') \to (Y, E)$ is a toroidal resolution, then 
    \begin{equation*}
        \mathrm{QM}(Y', E') = \mathrm{QM}(Y, E). 
    \end{equation*}
    Conversely, if $\sigma \subset \mathrm{QM}_y(Y, E)$ is a rational simplicial cone, then there exists a toroidal proper birational morphism $f \colon (Y', E') \to (Y, E)$ with $(Y', E')$ simple-toroidal and $\mathrm{QM}_{y'}(Y', E') = \sigma$ for some $y' \in f^{-1}(y)$. Moreover, we can choose $(Y', E')$ such that every $f$-exceptional divisor corresponds to an edge of $\sigma$, and $\mathrm{Ex}(f)$ supports an $f$-ample divisor, by \cite[Lem.\ 3.16]{XZ_stable_deg}. 
\end{rem} 

\begin{lem} \label{degenerate_monomial_val}
    Let $(Y, E + F)$ be a local simple-toroidal pair, where $E = \sum_{i=1}^{r} E_i$ and $F = \sum_{j=1}^{s} F_j$ have no common components. Let $Y_0 = \bigcap_{j=1}^{s} F_j$, and $E_0 = E|_{Y_0}$. Then $(Y_0, E_0)$ is simple-toroidal, and there is a commutative diagram 
    \begin{equation*}
        \begin{tikzcd}
            \mathrm{QM}(Y, E) \ar[r, "i", hook] \ar[d, "\simeq"] & \mathrm{QM}(Y, E + F) \ar[r, "p"] \ar[d, "\simeq"] & \mathrm{QM}(Y_0, E_0) \ar[d, "\simeq"] \\ 
            \mathbb{R}_{\geq 0}^{r} \ar[r, "{\alpha \mapsto (\alpha, 0)}"] & \mathbb{R}_{\geq 0}^{r+s} \ar[r, "{(\alpha, \beta) \mapsto \alpha}"] & \mathbb{R}_{\geq 0}^{r}
        \end{tikzcd}
    \end{equation*}
    so that the composition in the first row is a canonical isomorphism 
    \begin{equation*}
        p \circ i \colon \mathrm{QM}(Y, E) \simeq \mathrm{QM}(Y_0, E_0). 
    \end{equation*}
    Moreover, if $v_{\alpha} \in \mathrm{QM}(Y, E)$ maps to $\bar{v}_{\alpha} \in \mathrm{QM}(Y_0, E_0)$, and $\beta = (\beta_j) \in \mathbb{R}_{\geq 0}^s$, then 
    \begin{equation*}
        v_{\alpha}(f) \leq v_{\alpha,\beta}(f) \leq \sup \{ v_{\alpha, \beta}(f) : \beta \in \mathbb{R}_{\geq 0}^s \} = \lim_{\beta \to \infty} v_{\alpha, \beta}(f) = \bar{v}_{\alpha}(\bar{f}), 
    \end{equation*}
    for every $f \in \mathscr{O}_Y$, where $\bar{f} \in \mathscr{O}_{Y_0}$ is the image of $f$, and $\beta \to \infty$ means each $\beta_j \to \infty$. 
\end{lem}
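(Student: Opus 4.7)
The lemma is a local assertion at a generic point $y$ of a stratum of $(Y, E + F)$ lying in $Y_0$, i.e., a point with $y \in F_j$ for every $j$. My plan is to pass to a finite abelian snc cover where monomial valuations admit explicit power-series descriptions, carry out the computation there, and descend.

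For the simple-toroidality of $(Y_0, E_0)$, I take a finite abelian cover $\mu \colon (Y', D') \to (Y, E + F)$ \'etale away from $E + F$ with $(Y', D')$ snc, $\mu^{*}E_i = m_iE'_i$, and $\mu^{*}F_j = n_jF'_j$. At $y' \in \mu^{-1}(y)$ there are regular parameters $t_1, \ldots, t_r, u_1, \ldots, u_s \in \mathscr{O}_{Y', y'}$ cutting out the $E'_i$'s and $F'_j$'s; their restrictions to $Y'_0 = \bigcap_j F'_j$ show that $(Y'_0, E'_0 \coloneqq E'|_{Y'_0})$ is snc at $y'$ with regular parameters $t_1, \ldots, t_r$ cutting out $E'_0$. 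Standard toric bookkeeping (restricting the Galois group to the subgroup acting freely on $Y'_0 \setminus E'_0$) shows that the induced cover $Y'_0 \to Y_0$ exhibits $(Y_0, E_0)$ as simple-toroidal near $y$.

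To verify the diagram and the inequalities, the maps $i$ and $p$ on parameter spaces are as indicated. For $i$, the valuation $v_{(\alpha, 0)} \in \mathrm{QM}_y(Y, E + F)$ coincides with $v_\alpha \in \mathrm{QM}_y(Y, E)$ tautologically by comparing their defining formulas on $\hat{\mathscr{O}}_{Y', y'}$. For $p$, I define $\bar{v}_\alpha \in \mathrm{QM}_y(Y_0, E_0)$ by the monomial recipe with weights $\alpha$ using the snc cover $(Y'_0, E'_0)$. Expanding $f \in \mathscr{O}_{Y, y}$ as $\sum_{m, n} c_{m, n} t^m u^n \in \hat{\mathscr{O}}_{Y', y'}$ with $c_{m, n} \in \kappa(y')$, and writing $\bar{f} = \sum_m c_{m, 0} t^m$ for its image in $\hat{\mathscr{O}}_{Y'_0, y'}$, the explicit formulas
\begin{equation*}
    v_{\alpha, \beta}(f) = \inf\{ \langle \alpha, m \rangle + \langle \beta, n \rangle : c_{m, n} \neq 0 \}, \quad \bar{v}_\alpha(\bar{f}) = \inf\{ \langle \alpha, m \rangle : c_{m, 0} \neq 0 \}
\end{equation*}
make everything transparent: $\beta \geq 0$ yields $v_\alpha(f) \leq v_{\alpha, \beta}(f)$; the function $\beta \mapsto v_{\alpha, \beta}(f)$ is non-decreasing in each $\beta_j$ and bounded above by $\bar{v}_\alpha(\bar{f})$; and terms with $n \neq 0$ contribute $\langle \alpha, m \rangle + \langle \beta, n \rangle \to +\infty$ componentwise as $\beta \to \infty$, so the supremum equals the limit and equals $\bar{v}_\alpha(\bar{f})$.

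The main subtlety is the cover bookkeeping in the first step, namely identifying the correct quotient group so that $(Y'_0, E'_0) \to (Y_0, E_0)$ witnesses simple-toroidality; once this is in place, the maps and the chain of inequalities reduce to a direct formal power series computation.
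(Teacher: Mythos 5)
Your proposal follows essentially the same route as the paper: localize, pass to a finite abelian cover to reduce to the snc case, and read off the inequalities from the explicit power-series formulas for monomial valuations on $\hat{\mathscr{O}}_{Y',y'}$. The only difference is that you spell out the Galois-group bookkeeping for simple-toroidality of $(Y_0, E_0)$ a bit more explicitly, whereas the paper folds this into the phrase ``by localization and passing to a finite abelian cover.''
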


\begin{proof}
    By localization and passing to a finite abelian cover, we may assume that $(Y, E + F)$ is snc, and $\bigcap_{i=1}^{r} E_i \cap \bigcap_{j=1}^{s} F_j = \{y\}$, where $y \in Y$ is the closed point. Write $R = \mathscr{O}_{Y,y}$, and take a regular system of parameters $t_1, \ldots, t_r, x_1, \ldots, x_s \in R$ such that $E_i = \mathrm{div}(t_i)$ and $F_j = \mathrm{div}(x_j)$. Then 
    \begin{equation*}
        \mathscr{O}_{Y_0, y} \simeq R_0 \coloneqq R/(x_1, \ldots, x_s)
    \end{equation*}
    is a regular local ring, and $\bar{t}_1, \ldots, \bar{t}_r \in R_0$ is a regular system of parameters defining $E_0$. Write 
    \begin{equation*}
        f = \sum_{m \in \mathbb{N}^r} \sum_{n \in \mathbb{N}^s} c_{m,n}t_1^{m_1} \cdots t_r^{m_r}x_1^{n_1} \cdots x_s^{n_s} \in \hat{R} \simeq \kappa(y)[\![t_1, \ldots, t_r, x_1, \ldots, x_s]\!], 
    \end{equation*}
    so that 
    \begin{equation*}
        \bar{f} = \sum_{m \in \mathbb{N}^r} c_{m,0}\bar{t}_1^{m_1} \cdots \bar{t}_r^{m_r} \in \hat{R}_0 \simeq \kappa(y)[\![\bar{t}_1, \ldots, \bar{t}_r]\!]. 
    \end{equation*}
    Hence 
    \begin{equation*}
        \begin{aligned}
            v_{\alpha}(f) = \inf \{ \langle \alpha, m \rangle : c_{m,n} \neq 0 \} &\leq \inf \{ \langle \alpha, m \rangle + \langle \beta, n \rangle : c_{m,n} \neq 0 \} \\
            &\leq \sup_{\beta \geq 0} \inf \{ \langle \alpha, m \rangle + \langle \beta, n \rangle : c_{m,n} \neq 0 \} \\
            &= \lim_{\beta \to \infty} \inf \{ \langle \alpha, m \rangle + \langle \beta, n \rangle : c_{m,n} \neq 0 \} \\
            &= \inf \{ \langle \alpha, m \rangle : c_{m,0} \neq 0 \} = \bar{v}_{\alpha}(\bar{f})
        \end{aligned}
    \end{equation*}
    since $\langle \alpha, m \rangle + \langle \beta, n \rangle = \sum_{i=1}^{r} \alpha_i m_i + \sum_{j=1}^{s} \beta_j n_j \geq \min_j \beta_j$ when $n \neq 0$. 
\end{proof}

\begin{defn}
    Let $X$ be an integral scheme. A real valuation $v \in \mathrm{Val}_X$ is said to be \emph{quasi-monomial} if $v \in \mathrm{QM}(Y, E)$ for some snc pair $(Y, E)$ with a birational morphism $f \colon Y \to X$ of finite type. The set of quasi-monomial valuations on $X$ is denoted by $\mathrm{Val}_X^{\mathrm{qm}} \subset \mathrm{Val}_X$. It is equivalent to require $(Y, E)$ to be simple-toroidal. 

    Note that divisorial valuations are exactly quasi-monomial valuations of rational rank $1$. The set of divisorial valuations on $X$ is denoted by $\mathrm{Val}_X^{\mathrm{div}} \subset \mathrm{Val}_X^{\mathrm{qm}}$. 
\end{defn}

\subsubsection{Log discrepancies}

Let $(X, \Delta)$ be a pair such that $K_X + \Delta$ is $\mathbb{Q}$-Cartier. Suppose $f \colon Y \to X$ is a birational morphism of finite type, and $F$ is a prime divisor on $Y$. Then the log discrepancy 
\begin{equation*}
    A_{X,\Delta}(F) \coloneqq 1 + \mathrm{ord}_F(K_Y - f^{*}(K_X + \Delta))
\end{equation*}
only depends on the valuation $\mathrm{ord}_F \in \mathrm{Val}_X$. The log discrepancy extends to a function 
\begin{equation*}
    A_{X,\Delta} \colon \mathrm{Val}_X \to \mathbb{R} \cup \{+ \infty\}
\end{equation*}
such that 
\begin{equation*}
    A_{X, \Delta}(v) \coloneqq \sup_{(Y, E)} \sum\nolimits_i v(E_i) \cdot A_{X, \Delta}(E_i)
\end{equation*}
where the supremum ranges over all log resolutions $f \colon (Y, E = \sum_{i} E_i) \to (X, \Delta)$; see \cite[\S 5]{JM}. 

\begin{lem}
    Let $(X, \Delta)$ be a pair such that $K_X + \Delta$ is $\mathbb{Q}$-Cartier. Suppose 
    \begin{equation*}
        f \colon (Y, E = E_1 + \cdots + E_r) \to (X, \Delta)
    \end{equation*}
    is a log resolution. Then 
    \begin{equation*}
        A_{X, \Delta}(v) \geq \sum_{i = 1}^{r} v(E_i) \cdot A_{X, \Delta}(E_i)
    \end{equation*}
    for all $v \in \mathrm{Val}_X$, and the equality holds if and only if $v \in \mathrm{QM}(Y, E)$. 
\end{lem}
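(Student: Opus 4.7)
The plan is to dispose of the inequality by definition, then prove each direction of the equality separately. The inequality $A_{X,\Delta}(v)\geq\sum_i v(E_i)\cdot A_{X,\Delta}(E_i)$ is immediate because the right-hand side is one of the terms in the supremum defining $A_{X,\Delta}(v)$.

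For the direction $v\in\mathrm{QM}(Y,E)\Rightarrow$ equality, I would use that every log resolution of $(X,\Delta)$ is dominated by a toroidal modification of $(Y,E)$, so by directedness it suffices to check that for any toroidal modification $\pi\colon(Y',E')\to(Y,E)$, the quantity $\sum_j v(E'_j)\cdot A_{X,\Delta}(E'_j)$ agrees with $\sum_i v(E_i)\cdot A_{X,\Delta}(E_i)$. Writing $\pi^{*}E_i=\widetilde{E}_i+\sum_k a_{ki}F_k$ with $F_k$ the $\pi$-exceptional divisors, and exploiting that $(Y,E)$ is log smooth so that $A_{Y,E}(F_k)=0$ for each such $F_k$, the transformation law for log discrepancies yields $A_{X,\Delta}(F_k)=\sum_i a_{ki}\cdot A_{X,\Delta}(E_i)$; meanwhile $v\in\mathrm{QM}(Y',E')$ forces $v(E_i)=v(\widetilde{E}_i)+\sum_k a_{ki}v(F_k)$. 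Substituting both identities collapses the two sums to the same total.

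For the converse, let $v\notin\mathrm{QM}(Y,E)$ and let $y$ be the generic point of the center of $v$ on $Y$, with local parameters $t_1,\ldots,t_r$ defining the components of $E$ through $y$. Define the monomial retraction $r(v)\in\mathrm{QM}_y(Y,E)$ by $r(v)(t_i)=v(t_i)$; expanding power series in $\hat{\mathscr{O}}_{Y,y}$ shows $r(v)\leq v$ as valuations, with strict inequality at some $f\in\mathscr{O}_{Y,y}$ because $v\ne r(v)$. I would then choose a monomial ideal witnessing the gap and perform a toroidal modification $\pi\colon(Y',E')\to(Y,E)$ extracting a prime divisor $F$ over $y$ for which $v(F)>r(v)(F)$. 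Combining with the previous direction applied to $r(v)$ (whose total is unchanged by the modification), the actual valuation $v$ contributes an additional increment of $(v(F)-r(v)(F))\cdot A_{X,\Delta}(F)$ on $(Y',E')$, so that $A_{X,\Delta}(v)\geq\sum_j v(E'_j)\cdot A_{X,\Delta}(E'_j)>\sum_i v(E_i)\cdot A_{X,\Delta}(E_i)$.

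The main obstacle is ensuring that this increment is strictly positive rather than zero or negative: without positivity of $A_{X,\Delta}(F)$ one cannot conclude from $v(F)>r(v)(F)$ alone. In the klt setting relevant to this paper one has $A_{X,\Delta}(F)>0$ automatically; in the general case I would argue by choosing $F$ corresponding to a monomial direction whose $v$-gap pairs with a divisor of positive log discrepancy, and, if necessary, by invoking the approximation $A_{X,\Delta}(v)=\sup_\pi A_{X,\Delta}(r_\pi(v))$ over the directed system of toroidal refinements $\pi$ of $(Y,E)$, as developed in \cite{JM}, so that the strict inequality follows from strict monotonicity of retractions under sufficient refinement.
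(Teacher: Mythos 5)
The inequality direction matches the paper's observation that the right-hand side is one of the terms in the defining supremum (after a preliminary reduction to an snc model that the paper makes and you do not need). But both of your equality directions have gaps.

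For the direction $v\in\mathrm{QM}(Y,E)\Rightarrow$ equality, your framing is wrong: it is \emph{not} true that every log resolution of $(X,\Delta)$ is dominated by a toroidal modification of $(Y,E)$. (Blow up a general point of $Y$; the exceptional divisor has $A_{Y,E}>0$, and no toroidal modification of $(Y,E)$ extracts it, nor does any model dominating that blowup remain toroidal.) Your computation collapses the sum only when all exceptional divisors $F_k$ satisfy $A_{Y,E}(F_k)=0$. For a general log resolution $(Y'',E'')$ dominating $(Y,E)$, a direct discrepancy bookkeeping gives
\begin{equation*}
\sum_j v(E''_j)\cdot A_{X,\Delta}(E''_j) \;=\; \sum_i v(E_i)\cdot A_{X,\Delta}(E_i) \;+\; \sum_k v(F_k)\cdot A_{Y,E}(F_k),
\end{equation*}
and the vanishing of the second term for $v\in\mathrm{QM}(Y,E)$ — i.e.\ that $v(F_k)>0$ forces $A_{Y,E}(F_k)=0$ — is exactly the content of \cite[Cor.\ 5.4]{JM}; it is not something you can assume or obtain by restricting to toroidal modifications.

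For the converse, you correctly identify that you need $A_{X,\Delta}(F)>0$, which fails in general since the lemma does not assume $(X,\Delta)$ klt (or even lc — $A_{X,\Delta}(E_i)$ can be negative). Your fallback is to invoke the approximation $A_{X,\Delta}(v)=\sup_\pi A_{X,\Delta}(r_\pi(v))$ from \cite{JM}, but at that point you are just re-deriving the same theorem you are leaning on. The paper sidesteps both problems cleanly: it writes $A_{X,\Delta}(v)=A_{Y,\Delta_Y}(v)=A_Y(v)-\sum_i v(E_i)(1-A_{X,\Delta}(E_i))$ using the crepant pullback $\Delta_Y$, reduces the hypothesized equality on $(X,\Delta)$ to $A_Y(v)=\sum_i v(E_i)$ on the regular scheme $Y$ with trivial boundary, and then cites \cite[Cor.\ 5.4]{JM} directly. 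The crepant pullback trick is the missing ingredient; you should adopt it rather than attempt to reprove the $\Delta=0$ case from scratch.
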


\begin{proof}
    Note that the value $\sum_{i = 1}^{r} v(E_i) \cdot A_{X, \Delta}(E_i)$ does not change if we replace $(Y, E)$ with a toroidal resolution, and the set $\mathrm{QM}(Y, E)$ also remains the same. Hence we may assume that $f \colon (Y, E) \to (X, \Delta)$ is an snc model, so the inequality follows by definition. 

    If the equality holds, then it is proved that $v \in \mathrm{QM}(Y, E)$ in \cite[Cor.\ 5.4]{JM} when $X$ is regular and $\Delta = 0$. In general, we can reduce it to the regular case as follows. Let $\Delta_Y$ be the crepant pullback of $\Delta$, so that 
    \begin{equation*}
        \Delta_Y = \sum_{i=1}^{r} (1 - A_{X, \Delta}(E_i)) \cdot E_i, 
    \end{equation*}
    and 
    \begin{equation*}
        A_{X, \Delta}(v) = A_{Y, \Delta_Y}(v) = A_{Y}(v) - \sum_{i=1}^{r} v(E_i) \cdot (1 - A_{X, \Delta}(E_i)). 
    \end{equation*}
    If $A_{X, \Delta}(v) = \sum_{i = 1}^{r} v(E_i) \cdot A_{X, \Delta}(E_i)$, then $A_Y(v) = \sum_{i=1}^{r} v(E_i)$, hence $v \in \mathrm{QM}(Y, E)$. 
\end{proof}

\begin{defn}
    Let $(X, \Delta)$ be an lc pair. A real valuation $v \in \mathrm{Val}_X$ is called an \emph{lc place} for $(X, \Delta)$ if $A_{X, \Delta}(v) = 0$. Let $\mathrm{LCP}(X, \Delta) \subset \mathrm{Val}_X$ denote the set of all lc places for $(X, \Delta)$. A point $z \in X$ is called an \emph{lc center} of $(X, \Delta)$ if $z = \mathrm{center}_X(v)$ for some $v \in \mathrm{LCP}(X, \Delta)$. Note that $\mathrm{LCP}(X, \Delta) \subset \mathrm{Val}_{X}^{\mathrm{qm}}$. 
\end{defn}

\subsection{Local volumes}

\subsubsection{Ideal sequences} We consider ideal sequences indexed by $\mathbb{R}_{\geq 0}$. They are called \emph{graded systems of ideals} by \cite{ELS}, and \emph{filtrations} by \cite{BLQ_filt}. 

\begin{defn}
    Let $X$ be a scheme. An \emph{ideal sequence} $\mathfrak{a}_{\bullet} = \{\mathfrak{a}_{\lambda}\}_{\lambda \geq 0}$ on $X$ consists of coherent ideals $\mathfrak{a}_{\lambda} \subset \mathscr{O}_X$ for $\lambda \in \mathbb{R}_{\geq 0}$, satisfying the following conditions: 
    \begin{enumerate}[label=(\arabic*), nosep]
        \item $\mathfrak{a}_{\lambda} \subset \mathfrak{a}_{\mu}$ if $\lambda \geq \mu$; 
        \item $\mathfrak{a}_{\lambda} \mathfrak{a}_{\mu} \subset \mathfrak{a}_{\lambda + \mu}$ for all $\lambda, \mu \geq 0$. 
        \item $\mathfrak{a}_0 = \mathscr{O}_X$, and $\mathfrak{a}_{\lambda} = \bigcap_{\lambda' < \lambda} \mathfrak{a}_{\lambda'}$ for all $\lambda > 0$. 
    \end{enumerate}
    Let $x \in X$ be a point, with the prime ideal $\mathfrak{p}_x \subset \mathscr{O}_X$. We say an ideal sequence $\mathfrak{a}_{\bullet}$ is centered at $x$ if $\mathfrak{a}_{\lambda}$ is $\mathfrak{p}_x$-primary for all $\lambda > 0$. In this case, $\mathfrak{a}_{\bullet}$ is uniquely determined by the corresponding ideal sequence $\mathfrak{a}_{\bullet}\mathscr{O}_{X,x} = \{\mathfrak{a}_{\lambda}\mathscr{O}_{X,x}\}_{\lambda}$ of $\mathscr{O}_{X,x}$; hence we may assume $x$ is a closed point by localization if necessary. 
    
    Suppose $X$ in integral and $v \in \mathrm{Val}_X$. The ideal sequence $\mathfrak{a}_{\bullet}(v)$ associated with $v$ is defined by 
    \begin{equation*}
        \mathfrak{a}_{\lambda}(v) \coloneqq \{ f \in \mathscr{O}_X : v(f) \geq \lambda \}. 
    \end{equation*}
    If $\mathrm{center}_X(v) = x$, then $\mathfrak{a}_{\bullet}(v)$ is an ideal sequence centered at $x$. 
    
    A real valuation $v$ on $X$ can be evaluated at an ideal sequence $\mathfrak{a}_{\bullet}$ by 
    \begin{equation*}
        v(\mathfrak{a}_{\bullet}) = \inf_{\lambda > 0} \frac{v(\mathfrak{a}_{\lambda})}{\lambda} = \lim_{\lambda \to \infty} \frac{v(\mathfrak{a}_{\lambda})}{\lambda} \in \mathbb{R}_{\geq 0} \cup \{+\infty\}. 
    \end{equation*}
    Note that $v(\mathfrak{a}_{\bullet}(v)) = 1$. 
\end{defn}

\begin{defn}
    Let $X$ be an integral scheme, $x \in X$ be a closed point, and $\mathfrak{a}_{\bullet} = \{\mathfrak{a}_{\lambda}\}_{\lambda \geq 0}$ be an ideal sequence centered at $x$. Let $n = \dim_x(X)$. The \emph{multiplicity} of $\mathfrak{a}_{\bullet}$ is 
    \begin{equation*}
        \mathrm{e}(\mathfrak{a}_{\bullet}) = \mathrm{e}_{X}(\mathfrak{a}_{\bullet}) \coloneqq \limsup_{\lambda \to \infty} \frac{\mathrm{length}_{\mathscr{O}_X}(\mathscr{O}_X/\mathfrak{a}_{\lambda})}{\lambda^n/n!}. 
    \end{equation*}
    The limit superior is actually a limit by \cite[Thm.\ 1.1]{Cut_limit} under our assumptions for schemes. 

    Suppose $v \in \mathrm{Val}_{X,x}$. The \emph{volume} of $v$ is 
    \begin{equation*}
        \mathrm{vol}(v) = \mathrm{vol}_{X}(v) \coloneqq \mathrm{e}_{X}(\mathfrak{a}_{\bullet}(v)), 
    \end{equation*}
    where $\mathfrak{a}_{\bullet}(v) \subset \mathscr{O}_X$ is the ideal sequence associated with $v$. The set of valuations with positive volume is denoted by $\mathrm{Val}_{X,x}^{+}$. Note that $\mathrm{Val}_{X,x}^{\mathrm{qm}} \subset \mathrm{Val}_{X,x}^{+}$ by Izumi's Theorem; see \cite[Prop.\ 2.7]{BLQ_filt}. 
\end{defn}

\begin{defn}[cf.\ {\cite[Def.\ 3.1]{BLQ_filt}}]
    Let $X$ be an integral scheme, $x \in X$ be a closed point, and $\mathfrak{a}_{\bullet}$ be an ideal sequence centered at $x$. The \emph{saturation} $\tilde{\mathfrak{a}}_{\bullet}$ of $\mathfrak{a}_{\bullet}$ is the ideal sequence defined by 
    \begin{equation*}
        \tilde{\mathfrak{a}}_{\lambda} = \left\{ f \in \mathscr{O}_X : v(f) \geq \lambda \cdot v(\mathfrak{a}_{\bullet}) \ \text{for all}\ v \in \mathrm{Val}_{X,x}^{\mathrm{div}} \right\}. 
    \end{equation*}
    It has following basic properties: 
    \begin{enumerate}[label=(\arabic*), nosep]
        \item \cite[Lem.\ 3.8]{BLQ_filt} $\mathfrak{a}_{\lambda} \subset \tilde{\mathfrak{a}}_{\lambda}$ for all $\lambda \geq 0$; 
        \item \cite[Cor.\ 3.16]{BLQ_filt} $\mathrm{e}_X(\mathfrak{a}_{\bullet}) = \mathrm{e}_X(\tilde{\mathfrak{a}}_{\bullet})$; 
        \item \cite[Prop.\ 3.12]{BLQ_filt} $v(\mathfrak{a}_{\bullet}) = v(\tilde{\mathfrak{a}}_{\bullet})$ for all $v \in \mathrm{Val}_{X,x}^{+}$. 
    \end{enumerate}
\end{defn}

\begin{lem} \label{lct_of_saturation}
    Let $x \in (X, \Delta)$ be a klt singularity, and $\mathfrak{a}_{\bullet}$ be an ideal sequence centered at $x$ with 
    \begin{equation*}
        \mathrm{lct}(X, \Delta; \mathfrak{a}_{\bullet}) < +\infty. 
    \end{equation*}
    Suppose $\tilde{\mathfrak{a}}_{\bullet}$ is the saturation of $\mathfrak{a}_{\bullet}$. Then 
    \begin{equation*}
        \mathrm{lct}(X, \Delta; \tilde{\mathfrak{a}}_{\bullet}) = \mathrm{lct}(X, \Delta; \mathfrak{a}_{\bullet}), 
    \end{equation*}
    and a valuation $v \in \mathrm{Val}_{X,x}$ computes the lc threshold of $\tilde{\mathfrak{a}}_{\bullet}$ if and only if it computes that of $\mathfrak{a}_{\bullet}$. 
\end{lem}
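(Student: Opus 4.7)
The plan is to combine the valuative formula for the lc threshold,
\[
\mathrm{lct}(X, \Delta; \mathfrak{a}_{\bullet}) \;=\; \inf_{v \in \mathrm{Val}_{X,x}} \frac{A_{X, \Delta}(v)}{v(\mathfrak{a}_{\bullet})},
\]
with the key property of the saturation already recorded in the excerpt as $v(\mathfrak{a}_\bullet) = v(\tilde{\mathfrak{a}}_\bullet)$ for every $v \in \mathrm{Val}_{X,x}^{+}$ (property (3), from \cite[Prop.\ 3.12]{BLQ_filt}), together with the Izumi inclusion $\mathrm{Val}_{X,x}^{\mathrm{qm}} \subset \mathrm{Val}_{X,x}^{+}$.

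First I would observe that the containment $\mathfrak{a}_\lambda \subset \tilde{\mathfrak{a}}_\lambda$ immediately gives $v(\mathfrak{a}_\bullet) \geq v(\tilde{\mathfrak{a}}_\bullet)$ for every valuation, yielding a pointwise inequality between the two ratios and hence the easy direction $\mathrm{lct}(X, \Delta; \mathfrak{a}_\bullet) \leq \mathrm{lct}(X, \Delta; \tilde{\mathfrak{a}}_\bullet)$. For the reverse inequality, I would appeal to the theorem that on a klt pair the lc threshold of a graded sequence of ideals is computed by some quasi-monomial valuation $v^*$ (the main result of \cite{Xu_qm}). Since $v^* \in \mathrm{Val}_{X,x}^{\mathrm{qm}} \subset \mathrm{Val}_{X,x}^{+}$, property (3) gives $v^*(\mathfrak{a}_\bullet) = v^*(\tilde{\mathfrak{a}}_\bullet)$, so
\[
\mathrm{lct}(X, \Delta; \mathfrak{a}_\bullet) = \frac{A_{X, \Delta}(v^*)}{v^*(\mathfrak{a}_\bullet)} = \frac{A_{X, \Delta}(v^*)}{v^*(\tilde{\mathfrak{a}}_\bullet)} \geq \mathrm{lct}(X, \Delta; \tilde{\mathfrak{a}}_\bullet),
\]
and the two thresholds coincide.

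With equality of the thresholds in hand, the equivalence of the valuations computing them will be purely formal. If $v$ computes $\mathrm{lct}(\tilde{\mathfrak{a}}_\bullet)$, the chain
\[
\mathrm{lct}(\mathfrak{a}_\bullet) \;\leq\; \frac{A_{X, \Delta}(v)}{v(\mathfrak{a}_\bullet)} \;\leq\; \frac{A_{X, \Delta}(v)}{v(\tilde{\mathfrak{a}}_\bullet)} = \mathrm{lct}(\tilde{\mathfrak{a}}_\bullet) = \mathrm{lct}(\mathfrak{a}_\bullet)
\]
will be forced to be equalities throughout, showing that $v$ also computes $\mathrm{lct}(\mathfrak{a}_\bullet)$ (and incidentally that $v(\mathfrak{a}_\bullet) = v(\tilde{\mathfrak{a}}_\bullet)$). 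Conversely, any $v$ computing $\mathrm{lct}(\mathfrak{a}_\bullet)$ necessarily has $A_{X,\Delta}(v) < +\infty$ and $v(\mathfrak{a}_\bullet) > 0$; on a klt pair the Izumi comparison (cf.\ the proof of \cite[Prop.\ 2.7]{BLQ_filt}) then places $v$ in $\mathrm{Val}_{X,x}^{+}$, so once again $v(\mathfrak{a}_\bullet) = v(\tilde{\mathfrak{a}}_\bullet)$ and the same chain of equalities shows that $v$ computes $\mathrm{lct}(\tilde{\mathfrak{a}}_\bullet)$.

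The hard part, and the only genuinely non-formal input, is the existence of a quasi-monomial minimizer for the lc threshold of a graded sequence on a klt pair; once this is granted, the remainder of the argument is just a direct manipulation of the valuative formula combined with the saturation identity $v(\mathfrak{a}_\bullet) = v(\tilde{\mathfrak{a}}_\bullet)$ on $\mathrm{Val}_{X,x}^{+}$.
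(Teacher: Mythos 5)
Your proof of the equality of log canonical thresholds is correct but invokes a heavier result than the paper uses: you appeal to the existence of a quasi\-monomial valuation computing the lc threshold (the Jonsson--Musta\c{t}\u{a} conjecture resolved in \cite{Xu_qm}), whereas the paper simply recomputes both sides as an infimum over \emph{divisorial} valuations $w$, and uses $w(\tilde{\mathfrak{a}}_\bullet)=w(\mathfrak{a}_\bullet)$ there, which needs only the easy fact $\mathrm{Val}_{X,x}^{\mathrm{div}}\subset \mathrm{Val}_{X,x}^{+}$. Both routes are valid; yours requires the deep existence statement where the paper gets by without it.

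The genuine gap is in the ``$\Rightarrow$'' direction of the valuation equivalence. You argue that any $v$ computing $\mathrm{lct}(\mathfrak{a}_\bullet)$ has $A_{X,\Delta}(v)<+\infty$ and $v(\mathfrak{a}_\bullet)>0$ and then assert that ``the Izumi comparison (cf.\ \cite[Prop.\ 2.7]{BLQ_filt}) then places $v$ in $\mathrm{Val}_{X,x}^{+}$.'' But \cite[Prop.\ 2.7]{BLQ_filt} asserts $\mathrm{Val}_{X,x}^{\mathrm{qm}}\subset \mathrm{Val}_{X,x}^{+}$; a valuation computing an lc threshold is not assumed, and is not a priori known, to be quasi-monomial, so this citation does not cover it. The conditions $A_{X,\Delta}(v)<+\infty$ and $v(\mathfrak{a}_\bullet)>0$ do not by themselves provide a comparison of $v$ with any specific divisorial or quasi-monomial valuation, which is what a positive-volume conclusion requires. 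The paper closes precisely this gap: it first invokes \cite[Thm.\ 7.8]{JM} to promote $v$ to a valuation computing $\mathrm{lct}(X,\Delta;\mathfrak{a}_\bullet(v))$, deduces that this lc threshold is finite, extracts a divisorial $w$ with $w(\mathfrak{a}_\bullet(v))>0$, rescales so that $w\ge v$ on $\mathscr{O}_{X,x}$, and concludes $\mathrm{vol}_X(v)\ge \mathrm{vol}_X(w)>0$. That inclusion is what licenses the use of \cite[Prop.\ 3.12]{BLQ_filt} to get $v(\tilde{\mathfrak{a}}_\bullet)=v(\mathfrak{a}_\bullet)$. You would either need to reproduce this detour, or supply a reference for a genuine Izumi-type bound that applies to arbitrary real valuations of finite log discrepancy; the one you cite does not.
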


\begin{proof}
    We have 
    \begin{equation*}
        \mathrm{lct}(X, \Delta; \tilde{\mathfrak{a}}_{\bullet}) = \inf_{w \in \mathrm{Val}_{X,x}^{\mathrm{div}}} \frac{A_{X,\Delta}(w)}{w(\tilde{\mathfrak{a}}_{\bullet})} = \inf_{w \in \mathrm{Val}_{X,x}^{\mathrm{div}}} \frac{A_{X,\Delta}(w)}{w(\mathfrak{a}_{\bullet})} = \mathrm{lct}(X, \Delta; \mathfrak{a}_{\bullet})
    \end{equation*}
    since $w(\tilde{\mathfrak{a}}_{\bullet}) = w(\mathfrak{a}_{\bullet})$ for every $w \in \mathrm{Val}_{X,x}^{\mathrm{div}}$. For a general real valuation $v \in \mathrm{Val}_{X,x}$, we have 
    \begin{equation*}
        \frac{A_{X,\Delta}(v)}{v(\tilde{\mathfrak{a}}_{\bullet})} \geq \frac{A_{X,\Delta}(v)}{v(\mathfrak{a}_{\bullet})}
    \end{equation*}
    since $\mathfrak{a}_{\bullet} \subset \tilde{\mathfrak{a}}_{\bullet}$. Thus, if $v$ computes the lc threshold of $\tilde{\mathfrak{a}}_{\bullet}$, then it also computes that of $\mathfrak{a}_{\bullet}$. Conversely, assume $v$ computes the lc threshold of $\mathfrak{a}_{\bullet}$. Then $v$ also computes the lc threshold of the ideal sequence $\mathfrak{a}_{\bullet}(v)$ associated with itself by \cite[Thm.\ 7.8]{JM}. Hence 
    \begin{equation*}
        \inf_{w \in \mathrm{Val}_{X,x}^{\mathrm{div}}} \frac{A_{X,\Delta}(w)}{w(\mathfrak{a}_{\bullet}(v))} = \mathrm{lct}(X, \Delta; \mathfrak{a}_{\bullet}(v)) = \frac{A_{X,\Delta}(v)}{v(\mathfrak{a}_{\bullet}(v))} < +\infty. 
    \end{equation*}
    Then there exists a divisorial valuation $w \in \mathrm{Val}_{X,x}^{\mathrm{div}}$ such that $w(\mathfrak{a}_{\bullet}(v)) > 0$. By rescaling, we may assume that $w(\mathfrak{a}_{\bullet}(v)) = 1$, so that $w(f) \geq v(f)$ for all $f \in \mathscr{O}_{X,x}$. Thus 
    \begin{equation*}
        \mathrm{vol}_X(v) \geq \mathrm{vol}_X(w) > 0. 
    \end{equation*}
    By \cite[Prop.\ 3.12]{BLQ_filt}, $v(\tilde{\mathfrak{a}}_{\bullet}) = v(\mathfrak{a}_{\bullet})$. Hence $v$ also computes the lc threshold of $\tilde{\mathfrak{a}}_{\bullet}$. 
\end{proof}

\subsubsection{Normalized volumes and the minimizers}

\begin{defn}[cf.\ {\cite[\S1]{Li_vol}}]
    Let $x \in (X, \Delta)$ be a klt singularity, and $v \in \mathrm{Val}_{X,x}$ be a real valuation. The normalized volume of $v$ is 
    \begin{equation*}
        \widehat{\mathrm{vol}}(v) = \widehat{\mathrm{vol}}_{X, \Delta}(v) \coloneqq A_{X, \Delta}(v)^n \cdot \mathrm{vol}_X(v) \in \mathbb{R}_{\geq 0} \cup \{+\infty\}
    \end{equation*}
    where $n = \dim_x(X)$, and by convention $+\infty \cdot 0 = +\infty$. 

    The \emph{local volume} of $x \in (X, \Delta)$ is 
    \begin{equation*}
        \widehat{\mathrm{vol}}(x; X, \Delta) \coloneqq \inf \left\{ \widehat{\mathrm{vol}}_{X, \Delta}(v) : v \in \mathrm{Val}_{X,x} \right\}. 
    \end{equation*}
    The local volume is positive by \cite[Thm.\ 1.2]{Li_vol}. A real valuation $v \in \mathrm{Val}_{X,x}$ is called a \emph{minimizer} for the normalized volume if $\widehat{\mathrm{vol}}(v) = \widehat{\mathrm{vol}}(x; X, \Delta)$. 
\end{defn}

Let $x \in (X, \Delta)$ be a klt singularity. There exists a minimizer $v \in \mathrm{Val}_{X,x}$ of the normalized volume by \cite[Main Thm.]{Blu_exist} and \cite[Rmk.\ 3.8]{Xu_qm}, which is unique up to scaling by \cite[Thm.\ 1.1]{XZ_unique} and \cite[Cor.\ 1.3]{BLQ_filt}, and is quasi-monomial by \cite[Thm.\ 1.2]{Xu_qm}. 

\begin{lem} \label{minimizing_ideal_seq}
    Let $x \in (X, \Delta)$ be a klt singularity, and $v^{\mathrm{m}} \in \mathrm{Val}_{X,x}$ be a minimizer of the normalized volume. Let $n = \dim_x(X)$. Then the following hold: 
    \begin{enumerate}[label=\emph{(\arabic*)}, nosep]
        \item $v^{\mathrm{m}}$ computes the lc threshold of $\mathfrak{a}_{\bullet}(v^{\mathrm{m}})$, and
        \begin{equation*}
            \widehat{\mathrm{vol}}(x; X, \Delta) = \mathrm{lct}(X, \Delta; \mathfrak{a}_{\bullet}(v^{\mathrm{m}}))^n \cdot \mathrm{e}_X(\mathfrak{a}_{\bullet}(v^{\mathrm{m}})) = \inf \left\{ \mathrm{lct}(X, \Delta; \mathfrak{a}_{\bullet})^n \cdot \mathrm{e}_X(\mathfrak{a}_{\bullet}) \right\}
        \end{equation*}
        where $\mathfrak{a}_{\bullet}$ ranges over all ideal sequences centered at $x$; 
        \item if $v \in \mathrm{Val}_{X,x}$ computes the lc threshold of $\mathfrak{a}_{\bullet}(v^{\mathrm{m}})$, then $v = c \cdot v^{\mathrm{m}}$ for some $c \in \mathbb{R}_{>0}$; 
        \item if $\mathfrak{a}_{\bullet} = \{\mathfrak{a}_{\lambda}\}_{\lambda}$ is an ideal sequence centered at $x$ such that $\widehat{\mathrm{vol}}(x; X, \Delta) = \mathrm{lct}(X, \Delta; \mathfrak{a}_{\bullet})^n \cdot \mathrm{e}_X(\mathfrak{a}_{\bullet})$, then $\tilde{\mathfrak{a}}_{\lambda} = \mathfrak{a}_{c\lambda}(v^{\mathrm{m}})$ for some $c \in \mathbb{R}_{>0}$, for all $\lambda \geq 0$. 
    \end{enumerate}
\end{lem}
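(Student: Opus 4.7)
The plan is to bootstrap from (1), which is essentially Y.~Liu's formula $\widehat{\mathrm{vol}}(x; X, \Delta) = \inf_{\mathfrak{a}_{\bullet}} \mathrm{lct}(X, \Delta; \mathfrak{a}_{\bullet})^n \cdot \mathrm{e}_X(\mathfrak{a}_{\bullet})$, and then to leverage the uniqueness of the minimizer to identify the saturation in (3). For (1), I would first invoke \cite[Thm.\ 7.8]{JM}: the quasi-monomial $v^{\mathrm{m}}$ computes $\mathrm{lct}(X, \Delta; \mathfrak{a}_{\bullet}(v^{\mathrm{m}}))$, and since $v^{\mathrm{m}}(\mathfrak{a}_{\bullet}(v^{\mathrm{m}})) = 1$ this lc threshold equals $A_{X,\Delta}(v^{\mathrm{m}})$; combined with $\mathrm{vol}(v^{\mathrm{m}}) = \mathrm{e}_X(\mathfrak{a}_{\bullet}(v^{\mathrm{m}}))$, the first equality drops out. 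For the infimum equality, for an arbitrary $\mathfrak{a}_{\bullet}$ I would pick a quasi-monomial $v$ computing its lc threshold (via \cite[Thm.\ 7.3]{JM}), rescale so $v(\mathfrak{a}_{\bullet}) = 1$, and use $\mathfrak{a}_{\lambda} \subset \mathfrak{a}_{\lambda}(v)$ to obtain $\mathrm{vol}(v) \leq \mathrm{e}_X(\mathfrak{a}_{\bullet})$ and hence $\widehat{\mathrm{vol}}(v) \leq \mathrm{lct}(\mathfrak{a}_{\bullet})^n \mathrm{e}_X(\mathfrak{a}_{\bullet})$; minimality of $v^{\mathrm{m}}$ closes the loop.

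The same rescaling trick is the heart of (2): if $v$ computes $\mathrm{lct}(X, \Delta; \mathfrak{a}_{\bullet}(v^{\mathrm{m}}))$ and is normalized to $v(\mathfrak{a}_{\bullet}(v^{\mathrm{m}})) = 1$, then $A_{X,\Delta}(v) = A_{X,\Delta}(v^{\mathrm{m}})$, and the normalization forces $v(f) \geq v^{\mathrm{m}}(f)$ for every $f$, yielding $\mathfrak{a}_{\lambda}(v^{\mathrm{m}}) \subset \mathfrak{a}_{\lambda}(v)$ and $\mathrm{vol}(v) \leq \mathrm{vol}(v^{\mathrm{m}})$. Thus $\widehat{\mathrm{vol}}(v) \leq \widehat{\mathrm{vol}}(v^{\mathrm{m}}) = \widehat{\mathrm{vol}}(x; X, \Delta)$, so by minimality $v$ is itself a minimizer; \cite{Xu_qm} makes it quasi-monomial, and \cite{XZ_unique} (or \cite[Cor.\ 1.3]{BLQ_filt}) makes it proportional to $v^{\mathrm{m}}$. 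For (3), applying the same argument to the given $\mathfrak{a}_{\bullet}$ produces a quasi-monomial $v$ computing $\mathrm{lct}(X, \Delta; \mathfrak{a}_{\bullet})$ with $v(\mathfrak{a}_{\bullet}) = 1$; the volume-multiplicity inequality combined with the hypothesis $\widehat{\mathrm{vol}}(x; X, \Delta) = \mathrm{lct}^n \cdot \mathrm{e}$ forces both inequalities to be equalities, so $v$ is a minimizer and $\mathrm{vol}(v) = \mathrm{e}_X(\mathfrak{a}_{\bullet})$. By (2), $v = c^{-1} v^{\mathrm{m}}$ for some $c > 0$, and hence $\mathfrak{a}_{\lambda}(v) = \mathfrak{a}_{c\lambda}(v^{\mathrm{m}})$.

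It remains to identify $\tilde{\mathfrak{a}}_{\bullet}$ with $\mathfrak{a}_{\bullet}(v)$. First, $\mathfrak{a}_{\bullet}(v)$ is itself saturated: by \cite[Prop.\ 3.12]{BLQ_filt}, $v(\widetilde{\mathfrak{a}_{\bullet}(v)}) = v(\mathfrak{a}_{\bullet}(v)) = 1$, so $v(\widetilde{\mathfrak{a}_{\lambda}(v)}) \geq \lambda$ for all $\lambda$, forcing $\widetilde{\mathfrak{a}_{\lambda}(v)} \subset \mathfrak{a}_{\lambda}(v)$. The same estimate applied to $\tilde{\mathfrak{a}}_{\bullet}$, using $v(\tilde{\mathfrak{a}}_{\bullet}) = v(\mathfrak{a}_{\bullet}) = 1$ from \cite[Prop.\ 3.12]{BLQ_filt}, gives $\tilde{\mathfrak{a}}_{\bullet} \subset \mathfrak{a}_{\bullet}(v)$, and both sides share the multiplicity $\mathrm{e}_X(\mathfrak{a}_{\bullet}) = \mathrm{vol}(v)$. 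The final step---passing from inclusion plus equal multiplicity to equality for saturated sequences---is the main obstacle, and requires the rigidity in the Alexandrov--Fenchel-type inequality for ideal sequences from \cite{BLQ_filt}; this is the stronger uniqueness of the minimizer alluded to in the outline. Granted that, $\tilde{\mathfrak{a}}_{\lambda} = \mathfrak{a}_{\lambda}(v) = \mathfrak{a}_{c\lambda}(v^{\mathrm{m}})$ as desired.
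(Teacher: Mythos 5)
Your plan for (2) matches the paper's argument word-for-word, so there is nothing to flag there. But both (1) and (3) have issues.

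In (1), the opening move is circular. You want to invoke \cite[Thm.\ 7.8]{JM} to conclude directly that $v^{\mathrm{m}}$ computes $\mathrm{lct}(X,\Delta;\mathfrak{a}_\bullet(v^{\mathrm{m}}))$, but that theorem has the form ``if $v$ computes $\mathrm{lct}(\mathfrak{b}_\bullet)$ for some graded sequence $\mathfrak{b}_\bullet$, then it computes $\mathrm{lct}(\mathfrak{a}_\bullet(v))$'' --- it presupposes that $v$ already computes something, which is exactly what you are trying to establish. (The paper's own two uses of \cite[Thm.\ 7.8]{JM}, in the proofs of Lemma \ref{lct_of_saturation} and of the direction (iv)$\Rightarrow$(v) of Lemma \ref{special_val_equiv_conditions}, both come after that hypothesis has been secured.) The fix is a reordering you already have the pieces for: one does not \emph{assume} $v^{\mathrm{m}}$ computes the lct, one only uses the trivial inequality $\mathrm{lct}(\mathfrak{a}_\bullet(v^{\mathrm{m}})) \le A_{X,\Delta}(v^{\mathrm{m}})/v^{\mathrm{m}}(\mathfrak{a}_\bullet(v^{\mathrm{m}})) = A_{X,\Delta}(v^{\mathrm{m}})$ to get $\mathrm{lct}^n\cdot\mathrm{e} \le \widehat{\mathrm{vol}}(x;X,\Delta)$, and your rescaling argument (the Liu inequality) for the reverse; equality then \emph{forces} $v^{\mathrm{m}}$ to compute the lc threshold. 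That is precisely what the paper does, citing \cite[Thm.\ 27]{Liu_vol_KE} for the hard direction rather than re-deriving it.

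For (3) you propose a genuinely different route from the paper. The paper runs the geodesic $\{\mathfrak{a}_{\bullet,t}\}$ between $\mathfrak{a}_\bullet$ and $\mathfrak{a}_\bullet(v^{\mathrm{m}})$, uses convexity of $\mathrm{lct}$ and concavity of $\mathrm{e}^{-1/n}$ along it from \cite{BLQ_filt} to pin everything at equality, and then invokes \cite[Thm.\ 1.1(3)]{BLQ_filt}, whose rigidity statement is precisely about equality in that concavity inequality. Your route instead uses (2) to identify $v = c^{-1}v^{\mathrm{m}}$ and then tries to upgrade the inclusion $\tilde{\mathfrak{a}}_\bullet \subset \mathfrak{a}_\bullet(v)$ together with $\mathrm{e}(\tilde{\mathfrak{a}}_\bullet) = \mathrm{e}(\mathfrak{a}_\bullet(v))$ to equality. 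You flag this last step as the main obstacle, correctly --- but the form of rigidity you gesture at (``inclusion plus equal multiplicity forces equality for saturated sequences'') is \emph{not} what \cite{BLQ_filt} proves, and I do not see a shortcut to it. To apply \cite[Thm.\ 1.1(3)]{BLQ_filt} you must produce the equality case of the geodesic inequality, which means building the geodesic anyway; at that point your (2)-first detour buys nothing. If you do believe that a Rees-type rigidity (nested saturated filtrations of equal multiplicity coincide) holds without the geodesic, that needs a separate proof or a precise citation --- it is not among the listed properties of the saturation (\cite[Lem.\ 3.8, Cor.\ 3.16, Prop.\ 3.12]{BLQ_filt}).
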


\begin{proof}
    (1). By \cite[Thm.\ 27]{Liu_vol_KE}, we have 
    \begin{equation*}
        \widehat{\mathrm{vol}}(x; X, \Delta) = \inf \left\{ \mathrm{lct}(X, \Delta; \mathfrak{a}_{\bullet})^n \cdot \mathrm{e}_X(\mathfrak{a}_{\bullet}) \right\}. 
    \end{equation*}
    Since $v^{\mathrm{m}}(\mathfrak{a}_{\bullet}(v^{\mathrm{m}})) = 1$, and $\mathrm{e}_X(\mathfrak{a}_{\bullet}(v^{\mathrm{m}})) = \mathrm{vol}_X(v^{\mathrm{m}})$ by definition, and 
    \begin{equation*}
        \mathrm{lct}(X, \Delta; \mathfrak{a}_{\bullet}(v^{\mathrm{m}})) \leq \frac{A_{X,\Delta}(v^{\mathrm{m}})}{v^{\mathrm{m}}(\mathfrak{a}_{\bullet}(v^{\mathrm{m}}))} = A_{X,\Delta}(v^{\mathrm{m}}), 
    \end{equation*}
    we have 
    \begin{equation*}
        \mathrm{lct}(X, \Delta; \mathfrak{a}_{\bullet}(v^{\mathrm{m}}))^n \cdot \mathrm{e}_X(\mathfrak{a}_{\bullet}(v^{\mathrm{m}})) \leq A_{X, \Delta}(v^{\mathrm{m}})^n \cdot \mathrm{vol}_X(v^{\mathrm{m}}) = \widehat{\mathrm{vol}}(x; X, \Delta). 
    \end{equation*}
    Thus the equality holds, and $v^{\mathrm{m}}$ computes the lc threshold $\mathrm{lct}(X, \Delta; \mathfrak{a}_{\bullet}(v^{\mathrm{m}}))$. 

    (2). Suppose $v$ computes the lc threshold of $\mathfrak{a}_{\bullet}(v^{\mathrm{m}})$, that is, 
    \begin{equation*}
        \mathrm{lct}(X, \Delta; \mathfrak{a}_{\bullet}(v^{\mathrm{m}})) = \frac{A_{X,\Delta}(v)}{v(\mathfrak{a}_{\bullet}(v^{\mathrm{m}}))}. 
    \end{equation*}
    By scaling, we may assume that $v(\mathfrak{a}_{\bullet}(v^{\mathrm{m}})) = \inf v(\mathfrak{a}_{\lambda}(v^{\mathrm{m}}))/\lambda = 1$. Thus $v(\mathfrak{a}_{\lambda}(v^{\mathrm{m}})) \geq \lambda$, i.e., 
    \begin{equation*}
        \mathfrak{a}_{\lambda}(v^{\mathrm{m}}) \subset \mathfrak{a}_{\lambda}(v)
    \end{equation*}
    for all $\lambda \geq 0$. Hence 
    \begin{equation*}
        \mathrm{vol}_X(v) = \mathrm{e}_X(\mathfrak{a}_{\lambda}(v)) \leq \mathrm{e}_X(\mathfrak{a}_{\lambda}(v^{\mathrm{m}})) = \mathrm{vol}_X(v^{\mathrm{m}}). 
    \end{equation*}
    Since $A_{X,\Delta}(v) = \mathrm{lct}(X, \Delta; \mathfrak{a}_{\bullet}(v^{\mathrm{m}})) = A_{X,\Delta}(v^{\mathrm{m}})$, we get 
    \begin{equation*}
        \widehat{\mathrm{vol}}_{X,\Delta}(v) \leq \widehat{\mathrm{vol}}_{X,\Delta}(v^{\mathrm{m}}) = \widehat{\mathrm{vol}}(x; X, \Delta). 
    \end{equation*}
    By uniqueness of the minimizer, $v = c \cdot v^{\mathrm{m}}$ for some $c > 0$. In fact, $v = v^{\mathrm{m}}$ since $v(\mathfrak{a}_{\bullet}(v^{\mathrm{m}})) = 1$. 

    (3). This is obtained in the proof of \cite[Cor.\ 1.3]{BLQ_filt}, which we briefly recall: We may assume that $\mathrm{lct}(\mathfrak{a}_{\bullet}) = \mathrm{lct}(\mathfrak{a}_{\bullet}(v^{\mathrm{m}})) = 1$ by scaling. Let $\mathfrak{a}_{\bullet, t}$, where $0 \leq t \leq 1$, be the geodesic between $\mathfrak{a}_{\bullet, 0} = \mathfrak{a}_{\bullet}$ and $\mathfrak{a}_{\bullet, 1} = \mathfrak{a}_{\bullet}(v^{\mathrm{m}})$ defined in \cite[Def.\ 4.1]{BLQ_filt}. Then 
    \begin{equation*}
        \mathrm{lct}(X, \Delta; \mathfrak{a}_{\bullet, t}) \leq (1-t) \cdot \mathrm{lct}(\mathfrak{a}_{\bullet}) + t \cdot \mathrm{lct}(\mathfrak{a}_{\bullet}(v^{\mathrm{m}})) = 1
    \end{equation*}
    by \cite[Prop.\ 5.1]{BLQ_filt}, and 
    \begin{equation*}
        \mathrm{e}_X(\mathfrak{a}_{\bullet, t})^{-1/n} \geq (1-t) \cdot \mathrm{e}_X(\mathfrak{a}_{\bullet})^{-1/n} + t \cdot \mathrm{e}_X(\mathfrak{a}_{\bullet}(v^{\mathrm{m}}))^{-1/n} = \widehat{\mathrm{vol}}(x; X, \Delta)^{-1/n}
    \end{equation*}
    by \cite[Thm.\ 1.1(2)]{BLQ_filt}. Hence 
    \begin{equation*}
        \mathrm{lct}(X, \Delta; \mathfrak{a}_{\bullet, t})^n \cdot \mathrm{e}_X(\mathfrak{a}_{\bullet, t}) \leq \widehat{\mathrm{vol}}(x; X, \Delta), 
    \end{equation*}
    so all inequalities above are equalities. Then $\mathfrak{a}_{\bullet}$ and $\mathfrak{a}_{\bullet}(v^{\mathrm{m}})$ have the same saturation, up to scaling, by \cite[Thm.\ 1.1(3)]{BLQ_filt}. But $\mathfrak{a}_{\bullet}(v^{\mathrm{m}})$ is saturated by \cite[Lem.\ 3.20]{BLQ_filt}, so we get (3). 
\end{proof}

\subsection{Families of log Fano cone singularities} A log Fano cone singularity is a klt singularity with a good torus action, which generalizes the usual affine cone over a Fano variety. The K-stability of log Fano cone singularities is parallel to that of Fano varieties; see \cite{LX_stability}, \cite{LWX}, \cite{Huang}, \cite{LW_LFC} for an algebraic approach. 

Let $S$ be a reduced scheme. Let $M \simeq \mathbb{Z}^r$ be a free abelian group of finite rank $r$ (i.e., a lattice), and $\mathbb{T}_S = \mathop{\mathrm{Spec}_S}(\mathscr{O}_S[M])$ be the corresponding torus over $S$. 

\begin{defn}
    A \emph{locally stable family of log Fano cone singularities} over $S$ (with \emph{weight lattice} $M$) consists of a locally stable family of affine klt singularities $\pi \colon (X, \Delta) \to S$ with $x \in X(S)$ and an action of $\mathbb{T}_S$ on $(X, \Delta)$ over $S$ such that $x(S)$ is the fixed locus, called the \emph{vertex}. 
    
    Thus $X = \mathop{\mathrm{Spec}_S}(\mathscr{A})$ where
    \begin{equation*}
        \mathscr{A} = \bigoplus_{m \in M} \mathscr{A}_m
    \end{equation*}
    is an $M$-graded $\mathscr{O}_S$-algebra, such that $\mathscr{A}_{+} = \bigoplus_{m \neq 0} \mathscr{A}_m \subset \mathscr{A}$ is an ideal, and $\mathscr{A}/\mathscr{A}_{+} = \mathscr{A}_0 = \mathscr{O}_S$. Note that each $\mathscr{A}_m$ is a flat coherent $\mathscr{O}_S$-module. 

    A \emph{polarization} is $\xi \in N_{\mathbb{R}} \coloneqq \mathrm{Hom}(M, \mathbb{R})$ such that $\langle \xi, m \rangle > 0$ for all $m \in M \smallsetminus \{0\}$ with $\mathscr{A}_m \neq 0$. Let $\mathrm{wt}_{\xi} \in \mathrm{Val}_{X_s}$ denote the $\mathbb{T}_s$-invariant valuation on each fiber $X_s$ given by 
    \begin{equation*}
        \mathrm{wt}_{\xi}(a) = \min \{ \langle \xi, m \rangle : a_m \neq 0 \}, 
    \end{equation*}
    for $a = \sum_{m \in M} a_m \in \bigoplus_{m \in M} \mathscr{A}_m \otimes_{\mathscr{O}_S} \kappa(s)$. 

    The family $\pi \colon (X, \Delta) \to S$ is called a locally stable family of K-semistable Fano cone singularities over $S$ if there is a polarization $\xi$ such that $(X_s, \Delta_s; \xi)$ is a K-semistable polarized log Fano cone singularity for all $s \in S$. Such a polarization $\xi$ is unique up to scaling if it exists (e.g., by \cite[Thm.\ 3.5]{LX_stability}), hence we often omit it. 
\end{defn}

The following is a family version of \cite[\S 3.1]{Zhuang_BoundednessII}. 

\begin{lem} \label{compactify_log_Fano_cone}
    Let $\pi \colon (X = \mathop{\mathrm{Spec}_S}(\mathscr{A}), \Delta) \to S$ be a locally stable family of log Fano cone singularities, where $\mathscr{A} = \bigoplus_{m \in M} \mathscr{A}_m$, and $\xi$ be a polarization. Assume $\xi \in N$, that is, $\langle \xi, m \rangle \in \mathbb{Z}$ for all $m \in M$. Let 
    \begin{equation*}
        \overline{X} \coloneqq \mathop{\mathrm{Proj}_S} \mathscr{A}[t]
    \end{equation*}
    where $\mathscr{A}[t]$ is given the $\mathbb{N}$-grading with $\deg(\mathscr{A}_m) = \langle \xi, m \rangle$ and $\deg(t) = 1$. Then the following hold: 
    \begin{enumerate}[label=\emph{(\arabic*)}, nosep]
        \item There is a canonical open immersion $X \hookrightarrow \overline{X}$, and the complement $V = \overline{X} \smallsetminus X$ is a $\mathbb{Q}$-Cartier relatively ample divisor. 
        \item Let $\overline{\Delta}$ be the closure of $\Delta$, then $\bar{\pi} \colon (\overline{X}, \overline{\Delta} + V) \to S$ is a locally stable family of plt pairs. 
        \item If $S$ is connected, then $K_{\overline{X}/S} + \overline{\Delta} + (1+a)V \sim_{S, \mathbb{Q}} 0$ for some $a > 0$. 
    \end{enumerate}
\end{lem}

\begin{proof}
    (1). The open subset $X \subset \overline{X}$ is the locus $t \neq 0$, so the complement $V$ is the divisor defined by $t$. Note that $\mathscr{O}_{\overline{X}}(V) = \mathscr{O}_{\overline{X}}(1)$, hence $V$ is relatively ample over $S$. 

    We have $V = \mathop{\mathrm{Proj}_S} \mathscr{A}$ where $\mathscr{A}$ is given the $\mathbb{N}$-grading with $\deg(\mathscr{A}_m) = \langle \xi, m \rangle$, and $X$ (resp., $\overline{X}$) is the affine cone (resp., projective cone), relatively over $S$, for $\mathscr{O}_V(1) = \mathscr{O}_{\overline{X}}(1)|_V$. Thus $V \subset \overline{X}$ is Cartier on the locus where $\mathscr{O}_{V}(1)$ is invertible; in particular, $V \subset \overline{X}$ is Cartier in codimension $2$. 
    
    (2). First assume $S$ is the spectrum of a DVR, with the closed point $s \in S$. Then $(X, \Delta + X_s)$ is plt by \cite[Thm.\ 4.54]{Kol_fam}, hence $(V, \Delta_V + V_s)$ is plt, where $\Delta_V = \mathrm{Diff}_{V}(\overline{\Delta}) = \overline{\Delta}|_V$ (see \cite[Lem.\ 3.1]{Kollar_singularity}). Then $(\overline{X}, \overline{\Delta} + V + \overline{X}_s)$ is dlt. Thus the conclusion holds in this case by \cite[Thm.\ 4.54]{Kol_fam} again. Note that the closed fiber $\overline{\Delta}_s$ of $\overline{\Delta}$ is the closure of $\Delta_s$. 

    In general, the formation of $\overline{X}$ commutes with base change. By the DVR case above, the formation of $\overline{\Delta}$ commutes with specialization, hence $\bar{\pi} \colon (\overline{X}, \overline{\Delta} + V) \to S$ is a well-defined family of pairs by \cite[Thm-Def.\ 4.3]{Kol_fam}. Then it is a locally stable family of plt pairs since its base change to every DVR is so by \cite[Def-Thm.\ 4.7]{Kol_fam}. 

    (3). Let 
    \begin{equation*}
        Y = \mathop{\mathrm{Spec}_V} \bigoplus_{n \geq 0} \mathscr{O}_V(n), 
    \end{equation*}
    with the canonical morphisms $p \colon Y \to V$ and $\mu \colon Y \to X$, and $E = \mu^{-1}(x(S)) \simeq V$. Then 
    \begin{equation*}
        \mu \colon (Y, E) \to (X, \Delta)
    \end{equation*}
    is a family of birational models, and we can write 
    \begin{equation*}
        \mu^{*}(K_{X/S} + \Delta) = K_{Y/S} + \mu^{-1}_{*}\Delta + E'
    \end{equation*}
    for some $E'$ on $Y$ supported on $E$. For each fiber we have $E'_s = (1 - A_{X_s,\Delta_s}(E_s))E_s$, and $E_s$ is a prime divisor. Thus the function $s \mapsto A_{X_s, \Delta_s}(E_s)$ is locally constant. If $S$ is connected, then it is constant, say with value $a$. Note that $a > 0$ since $(X_s, \Delta_s)$ is klt for all $s \in S$. Then 
    \begin{equation*}
        K_{\overline{X}_s} + \overline{\Delta}_s + (1 + a)V_s \sim_{\mathbb{Q}} 0
    \end{equation*}
    for all $s \in S$ by \cite[Lem.\ 3.3]{Zhuang_BoundednessII}. Hence $K_{\overline{X}/S} + \overline{\Delta} + (1+a)V \sim_{S, \mathbb{Q}} 0$. 
\end{proof}

\begin{lem} \label{family_of_log_Fano_cone_constant_vol}
    Let $\pi \colon (X = \mathop{\mathrm{Spec}_S}(\mathscr{A}), \Delta) \to S$ be a locally stable family of log Fano cone singularities, where $\mathscr{A} = \bigoplus_{m \in M} \mathscr{A}_m$, and $\xi$ be a polarization. Then the following hold: 
    \begin{enumerate}[label=\emph{(\arabic*)}, nosep]
        \item The function $s \mapsto \mathrm{vol}_{X_s}(\mathrm{wt}_{\xi})$ is locally constant. 
        \item The function $s \mapsto A_{X_s,\Delta_s}(\mathrm{wt}_{\xi})$ is locally constant. 
        \item The function $s \mapsto \widehat{\mathrm{vol}}_{X_s,\Delta_s}(\mathrm{wt}_{\xi})$ is locally constant. 
    \end{enumerate}
    In particular, if $\pi \colon (X, \Delta) \to S$ is a locally stable family of K-semistable log Fano cone singularities with the polarization $\xi$, then the function
    \begin{equation*}
        s \mapsto \widehat{\mathrm{vol}}(x_s; X_s,\Delta_s)
    \end{equation*}
    is locally constant, where $x \colon S \to X$ is the vertex section. 
\end{lem}

\begin{proof}
    Write $n = \dim(X_s)$ for all $s \in S$. 
    
    (1). By definition, 
    \begin{equation*}
        \mathrm{vol}_{X_s}(\mathrm{wt}_{\xi}) = \lim_{\lambda \to \infty} \frac{n!}{\lambda^n} \sum_{\langle \xi, m \rangle < \lambda} \dim_{\kappa(s)}(\mathscr{A}_{m} \otimes_{\mathscr{O}_S} \kappa(s)), 
    \end{equation*}
    where the summation ranges over all $m \in M$ with $\langle \xi, m \rangle < \lambda$. Since $\mathscr{A}_m$ is a flat coherent $\mathscr{O}_S$-module, 
    \begin{equation*}
        s \mapsto \dim_{\kappa(s)}(\mathscr{A}_{m} \otimes_{\mathscr{O}_S} \kappa(s))
    \end{equation*}
    is locally constant for all $m \in M$ (see \cite[\texttt{05P2}]{stacks-project}). Hence $s \mapsto \mathrm{vol}_{X_s}(\mathrm{wt}_{\xi})$ is locally constant. 

    (2). Fix $s \in S$, then $\xi \mapsto A_{X_s,\Delta_s}(\mathrm{wt}_{\xi})$ is a linear map on $N_{\mathbb{R}}$ by \cite[Lem.\ 2.18]{LX_stability}. Hence it suffices to show $s \mapsto A_{X_s,\Delta_s}(\mathrm{wt}_{\xi})$ is locally constant when $\xi \in N$. We may assume that $S$ is connected. Let 
    \begin{equation*}
        \bar{\pi} \colon (\overline{X}, \overline{\Delta} + V) \to S
    \end{equation*}
    be the family constructed in Lemma \ref{compactify_log_Fano_cone} using $\xi$. Then 
    \begin{equation*}
        K_{\overline{X}/S} + \overline{\Delta} + (1 + a)V \sim_{S, \mathbb{Q}} 0
    \end{equation*}
    for some $a > 0$. Restricting to each fiber, we get $a = A_{X_s, \Delta_s}(\mathrm{wt}_{\xi})$ for all $s \in S$ by \cite[Lem.\ 3.3]{Zhuang_BoundednessII}. 

    (3). By definition, 
    \begin{equation*}
        \widehat{\mathrm{vol}}_{X_s,\Delta_s}(\mathrm{wt}_{\xi}) = A_{X_s,\Delta_s}(\mathrm{wt}_{\xi})^n \cdot \mathrm{vol}_{X_s}(\mathrm{wt}_{\xi}). 
    \end{equation*}
    Hence it is a locally constant function on $S$ by (1) and (2). 

    If $\pi \colon (X, \Delta) \to S$ is a family of K-semistable log Fano cone singularities with the polarization $\xi$, then 
    \begin{equation*}
        \widehat{\mathrm{vol}}(x_s; X_s,\Delta_s) = \widehat{\mathrm{vol}}_{X_s,\Delta_s}(\mathrm{wt}_{\xi})
    \end{equation*}
    by \cite[Thm.\ 3.5]{LX_stability}. Hence $s \mapsto \widehat{\mathrm{vol}}(x_s; X_s,\Delta_s)$ is locally constant by (3). 
\end{proof}

\section{Special valuations} \label{sec: special_val}

\subsection{Models of qdlt Fano type}

\begin{defn}[{\cite[Def.\ 3.5, 3.7]{XZ_stable_deg}}]
    Let $(X, \Delta)$ be an affine klt pair. A model 
    \begin{equation*}
        f \colon (Y, E = E_1 + \cdots + E_r) \to (X, \Delta)
    \end{equation*}
    is said to be of \emph{qdlt Fano type} if there is an effective $\mathbb{Q}$-divisor $D$ on $Y$ such that $\lfloor D \rfloor = 0$, $D \geq f^{-1}_{*}\Delta$, $(Y, D + E)$ is qdlt, and $-(K_Y + D + E)$ is $f$-ample. 
    
    The model $f \colon (Y, E) \to (X, \Delta)$ is called a \emph{qdlt anti-canonical model} if one can take $D = f^{-1}_{*}\Delta$ above, that is, $(Y, f^{-1}_{*}\Delta + E)$ is qdlt and $-(K_Y + f^{-1}_{*}\Delta + E)$ is $f$-ample. 

    Let $x \in (X, \Delta)$ be a klt singularity. A qdlt anti-canonical model $f \colon (Y, E) \to (X, \Delta)$ is called a \emph{Koll\'ar model} at $x$ if all components of $E$ are centered at $x$. 
\end{defn}

\begin{lem}[{\cite[Prop.\ 3.9]{XZ_stable_deg}}] \label{ample_model_of_qdltFano_is_Kollar}
    Let $(X, \Delta)$ be an affine klt pair, and $f \colon (Y, E) \to (X, \Delta)$ be a model of qdlt Fano type. Then there exists a birational contraction $\phi \colon Y \dashrightarrow \overline{Y}$ over $X$ that is a local isomorphism at generic points of strata of $(Y, E)$, such that $\bar{f} \colon (\overline{Y}, \overline{E} = \phi_{*}E) \to (X, \Delta)$ is a qdlt anti-canonical model, and $\mathrm{QM}(Y, E) = \mathrm{QM}(\overline{Y}, \overline{E})$. 
\end{lem}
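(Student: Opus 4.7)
The plan is to construct $\overline{Y}$ as the relative ample model over $X$ of the divisor
\begin{equation*}
    M := -(K_Y + f^{-1}_*\Delta + E) = -(K_Y + D + E) + D_0,
\end{equation*}
where $D_0 := D - f^{-1}_*\Delta \geq 0$ satisfies $\lfloor D_0 \rfloor = 0$, and then to use the qdlt hypothesis to show that the resulting birational contraction $\phi \colon Y \dashrightarrow \overline{Y}$ is a local isomorphism at the generic point of every stratum of $(Y, E)$.

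Since $-(K_Y + D + E)$ is $f$-ample and $(Y, D + E)$ is qdlt, perturbing $D+E$ by a small multiple of a general $\mathbb{Q}$-divisor in $|-(K_Y + D + E)|_{\mathbb{Q}, X}$ produces a klt boundary with $f$-ample anti-log-canonical divisor, so $Y$ is of Fano type over $X$. By BCHM the section ring
\begin{equation*}
    R := \bigoplus_{m \geq 0} f_*\mathscr{O}_Y(mM)
\end{equation*}
is finitely generated, so I set $\overline{Y} := \mathrm{Proj}_X R$. Since $M$ is the sum of an $f$-ample divisor and an effective divisor, $M$ is $f$-big, so the induced map $\phi \colon Y \dashrightarrow \overline{Y}$ is birational and a birational contraction. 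By construction, $\phi_*M = -(K_{\overline{Y}} + \phi_*f^{-1}_*\Delta + \phi_*E)$ is $\bar{f}$-ample.

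The core step is to verify that $\phi$ is a local isomorphism at the generic point $z$ of every stratum $\bigcap_{i \in I} E_i$ of $(Y, E)$. Since $\lfloor D + E \rfloor = E$, such a $z$ is an lc center of $(Y, D + E)$, and qdlt-ness makes $(\mathop{\mathrm{Spec}}\mathscr{O}_{Y, z}, (D + E)|_{\mathop{\mathrm{Spec}}\mathscr{O}_{Y, z}})$ simple-toroidal of dimension $|I|$; in particular each component of $E$ through $z$ is $\mathbb{Q}$-Cartier there. Applying Lemma~\ref{simple_toroidal} with the $|I|$ components of $E$ through $z$, whose sum is $\leq (D + E)|_{\mathop{\mathrm{Spec}}\mathscr{O}_{Y, z}}$, forces $D|_{\mathop{\mathrm{Spec}}\mathscr{O}_{Y, z}} = 0$. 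Hence both $f^{-1}_*\Delta$ and $D_0$ vanish on an open neighborhood $U$ of $z$, so $M|_U = -(K_Y + D + E)|_U$ is $f$-ample. Since the ample model is an isomorphism over the locus where the given divisor is already relatively ample, $\phi$ is an isomorphism on $U$.

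It follows that no component of $E$ is contracted by $\phi$, that the stratification of $\overline{E} := \phi_*E$ matches that of $E$ generically, and that the simple-toroidal structure at each lc center of $(\overline{Y}, \phi_*f^{-1}_*\Delta + \overline{E})$ is identified with that of $(Y, f^{-1}_*\Delta + E)$ via $\phi$. Hence $(\overline{Y}, \phi_*f^{-1}_*\Delta + \overline{E})$ is qdlt, and combined with the $\bar{f}$-ampleness established above this yields a qdlt anti-canonical model. I expect the main technical obstacle to be the finite-generation and ample-model setup (with its birational-contraction and iso-over-ample-locus properties); the geometric reduction $D = 0$ locally at strata is then a clean application of Lemma~\ref{simple_toroidal}.
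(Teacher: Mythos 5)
Your key geometric observation---that qdlt-ness of $(Y, D+E)$ together with Lemma~\ref{simple_toroidal} forces $D$ (hence both $f^{-1}_*\Delta$ and $D_0$) to vanish in a neighborhood of the generic point $z$ of every stratum of $(Y,E)$---is correct and is exactly the qdlt-specific input one needs. The setup, defining $\overline{Y}$ as the ample model of $M = -(K_Y + f^{-1}_*\Delta + E)$ over $X$ using finite generation from the Fano-type hypothesis, is also the right frame.

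However, the sentence ``Since the ample model is an isomorphism over the locus where the given divisor is already relatively ample, $\phi$ is an isomorphism on $U$'' is where the actual work lies, and you dismiss it as a routine feature of the ``ample-model setup.'' It is not. Since $M$ is not $f$-nef in general, the ample model is reached only after running an $M$-MMP over $X$, and the assertion that this MMP is a local isomorphism near each stratum requires proof: relative ampleness of $M$ on an open set $U$ is a statement about curves \emph{contained in} $U$, whereas each extremal contraction or flip is governed by complete curves in $Y$ that may enter and leave $U$. Concretely, the first step of the $M$-MMP does contract or flip a ray $R$ with $D_0 \cdot R < 0$, hence inside $\mathrm{Supp}(D_0)$ and away from $U$; but after that step $\phi_{1*}A$ is no longer relatively ample, so this argument does not iterate directly. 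One must instead run the MMP with scaling of the $f$-ample divisor $A = -(K_Y+D+E)$ and argue at each wall $t_i$ that a contracted ray $R_i$ has $A_i \cdot R_i > 0$ and $M_i \cdot R_i < 0$, forcing $D_{0,i} \cdot R_i < 0$ and hence $R_i \subset \mathrm{Supp}(D_{0,i})$, which by induction misses the strata; and then treat the final semi-ample contraction separately. You also need a separate argument that the resulting pair $(\overline{Y}, \bar{f}^{-1}_*\Delta + \overline{E})$ is actually lc and that no new lc centers appear beyond strata of $\overline{E}$---log discrepancies for $(\,\cdot\,, f^{-1}_*\Delta + E)$ decrease along a $-(K+f^{-1}_*\Delta+E)$-MMP, so lc-ness of the output is not automatic and is where one should exploit that $(Y, sf^{-1}_*\Delta+(1-s)D+E)$ stays qdlt for all $s\in[0,1]$. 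None of this is in your write-up; you flag it as a ``technical obstacle'' but it is the crux of the lemma, comparable in substance to the MMP-and-discrepancy bookkeeping carried out explicitly in the proof of Lemma~\ref{extract_special_div_is_qdlt_Fano_type}.
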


\begin{rem}
    Let $f \colon (Y, E = \sum_{i=1}^{r} E_i) \to (X, \Delta)$ be a model of qdlt Fano type. Then the anti-canonical model $\bar{f} \colon (\overline{Y}, \overline{E} = \sum_{i=1}^{r} \overline{E}_i) \to (X, \Delta)$ is uniquely determined by the divisorial valuations $\mathrm{ord}_{\overline{E}_i} = \mathrm{ord}_{E_i}$. In fact, since 
    \begin{equation*}
        -(K_{\overline{Y}} + \bar{f}^{-1}_{*}\Delta + \overline{E}) \sim_{X, \mathbb{Q}} -\sum_{i=1}^{r} a_i\overline{E}_i, 
    \end{equation*}
    is $\bar{f}$-ample, where $a_i = A_{X,\Delta}(\overline{E}_i) = A_{X, \Delta}(\mathrm{ord}_{E_i})$, if $\ell \in \mathbb{Z}_{>0}$ such that $\ell a_i \in \mathbb{Z}$ for all $i$, then 
    \begin{equation*}
        \overline{Y} = \mathop{\mathrm{Proj}_X} \bigoplus_{m \in \mathbb{N}} \bar{f}_{*}\mathscr{O}_{\overline{Y}}\left( -\sum_{i=1}^{r}m\ell a_i \overline{E}_i \right), 
    \end{equation*}
    and 
    \begin{equation*}
        \bar{f}_{*}\mathscr{O}_{\overline{Y}}\left( -\sum_{i=1}^{r}m\ell a_i \overline{E}_i \right) = f_{*}\mathscr{O}_Y\left( -\sum_{i=1}^{r}m\ell a_i E_i \right) = \bigcap_{i=1}^{r} \mathfrak{a}_{m\ell a_i}(\mathrm{ord}_{E_i})
    \end{equation*}
    only depend on the valuations $\mathrm{ord}_{E_i}$. 
\end{rem}

\begin{lem} \label{minimal_model_with_given_exceptional_is_qdlt_Fano}
    Let $(X, \Delta)$ be an affine klt pair, and $f \colon (Y, E = \sum_{i=1}^{r} E_i) \to (X, \Delta)$ be a model of qdlt Fano type. Suppose $f' \colon (Y', E' = \sum_{i=1}^{r} E_i') \to (X, \Delta)$ is a model such that $E'$ contains all $f'$-exceptional divisors, $\mathrm{ord}_{E_i'} = \mathrm{ord}_{E_i}$ for every $i$, and $-(K_{Y'} + f'^{-1}_{*}\Delta + E')$ is $f'$-nef. Then $f' \colon (Y', E') \to (X, \Delta)$ is a model of qdlt Fano type, and the birational map $\phi \colon Y \dashrightarrow Y'$ is a local isomorphism at all generic points of strata of $(Y, E)$ and $(Y', E')$. 
\end{lem}

\begin{proof}
     By Lemma \ref{ample_model_of_qdltFano_is_Kollar}, we may assume that $-(K_Y + f^{-1}_{*}\Delta + E)$ is ample. Since $Y$ and $Y'$ have the same exceptional divisors over $X$, the birational map $\phi^{-1} \colon Y' \dashrightarrow Y$ is an isomorphism in codimension $1$, so it is the relative ample model for $-(K_{Y'} + f'^{-1}_{*}\Delta + E')$. Since $-(K_{Y'} + f'^{-1}_{*}\Delta + E')$ is $f'$-nef, $\phi^{-1} \colon Y' \to Y$ is a morphism. Note that $Y$ is $\mathbb{Q}$-factorial at the generic points of strata of $(Y, E)$, so $\phi \colon Y \dashrightarrow Y'$ is a local isomorphism at these points, as otherwise $\phi^{-1}$ would have an exceptional divisor over there. Thus $f' \colon (Y', E') \to (X, \Delta)$ is a model of qdlt Fano type by \cite[Prop.\ 3.6]{XZ_stable_deg}. 
\end{proof}

\begin{defn}[{\cite[Def.\ 3.12]{XZ_stable_deg}}] \label{def:family_of_Kol_model}
    Let $S$ be a reduced scheme, and $\pi \colon (X, \Delta) \to S$ with $x \in X(S)$ be a locally stable family of klt singularities. A family of models $f \colon (Y, E) \to (X, \Delta)$ at $x$ is called a \emph{locally stable family of Koll\'ar models at $x$} if $\pi \circ f \colon (Y, f^{-1}_{*}\Delta + E) \to S$ is a locally stable family with qdlt fibers, and $-(K_Y + f^{-1}_{*}\Delta + E)$ is $f$-ample. 

    Note that the fiber $f_s \colon (Y_s, E_s) \to (X_s, \Delta_s)$ is a Koll\'ar model at $x_s$ for every $s \in S$. 
\end{defn}

\begin{rem} \label{local_stability_over_regular_local}
    Let $S$ be a regular local scheme of dimension $d$, with the generic point $\eta \in S$ and the closed point $s \in S$. Let $H = \sum_{j=1}^{d} H_j$ be an snc divisor on $S$ defined by a regular system of parameters. 
    
    Suppose $(X, \Delta)$ is a pair, and $\pi \colon X \to S$ is a dominant morphism. Then $\pi \colon (X, \Delta) \to S$ is a locally stable family if and only if $(X, \Delta + \pi^{*}H)$ is slc; see \cite[Thm.\ 4.54]{Kol_fam}. By the inversion of adjunction, $\pi \colon (X, \Delta) \to S$ has qdlt (resp., klt) fibers if and only if $(X, \Delta + \pi^{*}H)$ is qdlt (resp., qdlt with $\lfloor \Delta \rfloor = 0$). 

    Suppose $\pi \colon (X, \Delta) \to S$ with $x \in X(S)$ is a locally stable family of klt singularities. Then a locally stable family of Koll\'ar models 
    \begin{equation*}
        f \colon (Y, E) \to (X, \Delta)
    \end{equation*}
    is a model for $(X, \Delta)$ such that each component of $E$ dominates $S$, $(Y, f^{-1}_{*}\Delta + E + f^{*}\pi^{*}H)$ is qdlt, and $-(K_Y + f^{-1}_{*}\Delta + E)$ is $f$-ample. In other words, 
    \begin{equation*}
        f \colon (Y, E + f^{*}\pi^{*}H) \to (X, \Delta)
    \end{equation*}
    is a qdlt anti-canonical model for $(X, \Delta)$. 
\end{rem}

\subsection{Special valuations and complements}

\begin{defn}
    Let $(X, \Delta)$ be a pair. A subset $\sigma \subset \mathrm{Val}_X$ is called a \emph{quasi-monomial (simplicial) cone} for $(X, \Delta)$ if $\sigma = \mathrm{QM}_y(Y, E)$, where $f \colon (Y, E) \to (X, \Delta)$ is a model, $E$ contains all $f$-exceptional divisors, and $y \in (Y, E)$ is a generic point of a stratum such that $(Y, f^{-1}_{*}\Delta + E)$ is simple-toroidal at $y$. 

    Note that by taking a log resolution $(Y', E') \to (Y, f^{-1}_{*}\Delta + E)$ that is a local isomorphism over $y$, we may assume that $(Y, E)$ is a simple-toroidal model. 
    
    If $E_1, \ldots, E_r$ are the components of $E$ that contain $y$, then there is an isomorphism $\sigma \simeq \mathbb{R}_{\geq 0}^r$. If $\tau \subset \sigma$ corresponds to a rational simplicial cone in $\mathbb{R}_{\geq 0}^{r}$, then $\tau \subset \mathrm{Val}_X$ is also a quasi-monomial simplicial cone for $(X, \Delta)$. If $K_X + \Delta$ is $\mathbb{Q}$-Cartier, then the log discrepancy function $A_{X,\Delta}$ is linear on $\sigma$. 
\end{defn}

\begin{defn}
    Let $(X, \Delta)$ be an affine klt pair, and $f \colon (Y, E) \to (X, \Delta)$ be a simple-toroidal model. A $\mathbb{Q}$-complement $\Gamma$ of $(X, \Delta)$ is said to be \emph{special with respect to} $f \colon (Y, E) \to (X, \Delta)$ if $f^{-1}_{*}\Gamma \geq G$ for some effective $f$-ample $\mathbb{Q}$-divisor $G$ on $Y$ whose support does not contain any stratum of $(Y, E)$. 

    A quasi-monomial simplicial cone $\sigma \subset \mathrm{Val}_X$ for $(X, \Delta)$ is said to be \emph{special} if there is a simple-toroidal model $f \colon (Y, E) \to (X, \Delta)$, a special $\mathbb{Q}$-complement $\Gamma$ of $(X, \Delta)$ with respect to $f \colon (Y, E) \to (X, \Delta)$, and a generic point $y$ of a stratum of $(Y, E)$, such that $\sigma \subset \mathrm{QM}_y(Y, E) \cap \mathrm{LCP}(X, \Delta + \Gamma)$. 

    A real valuation $v \in \mathrm{Val}_X$ is said to be \emph{special} for $(X, \Delta)$ if $v \in \sigma$ for some special quasi-monomial simplicial cone $\sigma \subset \mathrm{Val}_X$. In particular, $v$ is quasi-monomial. 
\end{defn}

\begin{lem} \label{special_cone_give_qdlt_Fano_type_model}
    Let $(X, \Delta)$ be an affine klt pair, and $\sigma \subset \mathrm{Val}_X$ be a quasi-monomial cone for $(X, \Delta)$. Then the following are equivalent: 
    \begin{enumerate}[label=\emph{(\roman*)}, nosep]
        \item $\sigma$ is special. 
        \item there exists a model $f \colon (Y, E) \to (X, \Delta)$ of qdlt Fano type such that $\sigma = \mathrm{QM}(Y, E)$; 
    \end{enumerate}
\end{lem}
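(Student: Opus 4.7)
The plan is to prove the two implications separately, with (ii) $\Rightarrow$ (i) being a direct construction of a special $\mathbb{Q}$-complement from the qdlt Fano type structure, and (i) $\Rightarrow$ (ii) requiring us to extract only the divisors corresponding to $\sigma$ and to verify the qdlt Fano type structure on the extraction.

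For (ii) $\Rightarrow$ (i), I start from a qdlt Fano type model $f\colon(Y,E)\to(X,\Delta)$ with auxiliary boundary $D$ as in the definition (so $\lfloor D\rfloor=0$, $D\geq f^{-1}_{*}\Delta$, $(Y,D+E)$ is qdlt, and $-(K_Y+D+E)$ is $f$-ample). I would pick a general $G\in|{-(K_Y+D+E)}|_{f,\mathbb{Q}}$ small enough that $(Y,D+G+E)$ remains qdlt with $\lfloor D+G+E\rfloor=E$ and $\mathrm{Supp}(G)$ avoids every stratum of $(Y,E)$. Then $\Gamma\coloneqq f_{*}(D+G-f^{-1}_{*}\Delta)$ satisfies $K_X+\Delta+\Gamma\sim_{\mathbb{Q}}0$, $(X,\Delta+\Gamma)$ is lc, and every $E_i$ is an lc place. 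Passing to a toroidal log resolution that is a local isomorphism at the simple-toroidal lc centers, I may assume $(Y,E)$ is simple-toroidal; the strict transform of $\Gamma$ dominates an $f$-ample divisor not containing any stratum, so $\Gamma$ is special with respect to $f\colon(Y,E)\to(X,\Delta)$ and $\mathrm{QM}_{y}(Y,E)\subset\mathrm{LCP}(X,\Delta+\Gamma)$ for every generic point $y$ of a stratum, realizing $\mathrm{QM}(Y,E)$ as special.

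For (i) $\Rightarrow$ (ii), unpack specialness to obtain a simple-toroidal model $f\colon(Y,E)\to(X,\Delta)$, a generic point $y$, and a special $\mathbb{Q}$-complement $\Gamma$ with $\sigma\subset\mathrm{QM}_{y}(Y,E)\cap\mathrm{LCP}(X,\Delta+\Gamma)$. Let $E_1,\dots,E_r$ be the components of $E$ through $y$, whose valuations generate the rays of $\sigma$ and are lc places of $(X,\Delta+\Gamma)$. I would apply Lemma~\ref{extract_div}(ii), using an ideal witnessing the lc condition coming from $\Gamma$, to extract exactly these divisors: this yields $f'\colon Y'\to X$ with $\mathrm{Ex}(f')=E'=\sum E'_i$ of pure codimension one, each $E'_i$ being $\mathbb{Q}$-Cartier, together with an $f'$-ample $-A$ supported on $E'$. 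Crepancy along lc places gives $f'^{*}(K_X+\Delta+\Gamma)=K_{Y'}+f'^{-1}_{*}(\Delta+\Gamma)+E'$, so $Y'$ is of Fano type over $X$. Combining a small multiple of $-A$ with a general ample perturbation, I set $D'=f'^{-1}_{*}\Delta+\epsilon(f'^{-1}_{*}\Gamma+H-f'^{-1}_{*}\Delta)+(1-\epsilon)f'^{-1}_{*}\Gamma$ for $0<\epsilon\ll 1$ and a general $f'$-ample $H$ avoiding strata, and verify that $D'\geq f'^{-1}_{*}\Delta$, $\lfloor D'\rfloor=0$, and $-(K_{Y'}+D'+E')$ is honestly $f'$-ample.

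The main obstacle is the qdlt conclusion for $(Y',D'+E')$, i.e., checking simple-toroidality at every stratum of $E'$. This is where the hypothesis $\sigma=\mathrm{QM}_{y}(Y,E)$ being a single simplicial cone is essential: the birational map $Y\dashrightarrow Y'$ is a local isomorphism at $y$, so the simple-toroidal local structure of $(Y,E)$ at $y$ transports to the corresponding stratum of $(Y',E')$; the specialness of $\Gamma$ is exactly what prevents $f'^{-1}_{*}\Gamma$ from introducing new non-toroidal lc centers under the small perturbation. If checking qdlt directly proves cumbersome, I would fall back on Lemma~\ref{extract_special_div_is_qdlt_Fano_type}: first run a relative MMP on $-(K_{Y'}+f'^{-1}_{*}\Delta+E')$, using the Fano-type property of $Y'\to X$, to reach a qdlt anti-canonical model $(\overline{Y},\overline{E})$ extracting the same divisorial valuations, and then apply that lemma to conclude that $(Y',E')$ is of qdlt Fano type with $\sigma=\mathrm{QM}(Y',E')$.
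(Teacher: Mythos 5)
Your plan splits into two halves; each has a real gap.

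For the direction $(\text{i}) \Rightarrow (\text{ii})$, the paper simply cites \cite[Thm.\ 3.14]{XZ_stable_deg} (or \cite[Thm.\ 5.26]{Xu_K-Stability_Book}). You instead attempt a direct construction by extracting the divisors and then verifying qdlt. The step you yourself flag as ``the main obstacle'' --- proving that the resulting pair is actually \emph{qdlt} rather than merely lc --- is indeed where the content lies, and the fallback you propose does not resolve it: Lemma~\ref{extract_special_div_is_qdlt_Fano_type} takes as \emph{input} a qdlt anti-canonical model $\bar f\colon(\overline{Y},\overline{E})\to(X,\Delta)$ realizing the same divisorial valuations, so invoking it requires already having proved exactly the existence statement you are trying to establish. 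Running a relative MMP on $-(K_{Y'}+f'^{-1}_*\Delta+E')$ lands you on a model where that divisor is ample and the pair is lc (via the complement $\Gamma$), but the passage from lc to qdlt at the output of the MMP is the non-trivial termination/specialness argument proved in \cite[Thm.\ 3.14]{XZ_stable_deg}. Your proof as written just renames the hard step. A secondary issue in this direction: the rays of $\sigma$ need not be generated by $\mathrm{ord}_{E_i}$ for the components $E_i$ of $E$ through $y$, since $\sigma$ is only assumed to be contained in $\mathrm{QM}_y(Y,E)$; you would need a toroidal subdivision first to make the rays of $\sigma$ divisorial on the model before extracting.

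For the direction $(\text{ii}) \Rightarrow (\text{i})$, your general strategy (choose a general $G\in|{-(K_Y+D+E)}|_{\mathbb{Q}}$, push forward to get $\Gamma$, then pass to a toroidal resolution) matches the paper in outline but omits the key perturbation. Specialness requires, on the \emph{resolution} $\mu\colon Z\to Y$, that the strict transform of $\Gamma$ dominate a divisor that is ample on $Z$ and avoids all strata. The strict transform of your $G$ is $\mu^{-1}_*G$; since $\mu$ may contract divisors, the best you get for free is that $\mu^*G$ is nef (pullback of ample along a birational morphism), not ample. The paper handles this by picking $\mu$ so that there is a $\mu$-ample $-A\geq 0$ supported on $\mathrm{Ex}(\mu)$ and replacing $\mu^*G$ by $\mu^*G-\epsilon A$, which is then honestly ample on $Z$ for $0<\epsilon\ll 1$; a general member $H$ of that linear system gives the needed special complement. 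Without this correction your argument does not produce an ample $G$ on the simple-toroidal model. There is also a small bookkeeping slip: with $E$ allowed to have non-exceptional components, the crepant pushforward yields $\Gamma=f_*(D-f^{-1}_*\Delta)+f_*G+f_*E$, not $f_*(D+G-f^{-1}_*\Delta)$; dropping $f_*E$ means the non-exceptional components of $E$ would fail to be lc places of $(X,\Delta+\Gamma)$.
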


\begin{proof}
    (i) $\Rightarrow$ (ii). See \cite[Thm.\ 3.14]{XZ_stable_deg} or \cite[Thm.\ 5.26]{Xu_K-Stability_Book}. 

    (ii) $\Rightarrow$ (i). By Lemma \ref{ample_model_of_qdltFano_is_Kollar}, we may assume $(Y, f^{-1}_{*}\Delta + E)$ is qdlt and $-(K_Y + f^{-1}_{*}\Delta + E)$ is ample. Choose a general $\mathbb{Q}$-divisor $G \in |{-(K_Y + f^{-1}_{*}\Delta + E)}|_{\mathbb{Q}}$ such that $(Y, f^{-1}_{*}\Delta + G + E)$ is qdlt. Note that $f_{*}(G + E) \sim_{\mathbb{Q}} -(K_X + \Delta)$, where $f_{*}E$ consists of components of $E$ that are not $f$-exceptional. Let 
    \begin{equation*}
        \mu \colon (Z, F) \to (Y, f^{-1}_{*}\Delta + G + E)
    \end{equation*}
    be a log resolution such that $\mu$ is a local isomorphism over the simple-toroidal locus of $(Y, E)$, and there is a $\mu$-ample divisor $-A$ on $Z$ with $A \geq 0$ and $\mathrm{Supp}(A) = \mathrm{Ex}(\mu)$. Then we have 
    \begin{equation*}
            0 \sim_{\mathbb{Q}} \mu^{*}(K_Y + f^{-1}_{*}\Delta + G + E) = K_Z + \mu^{-1}_{*}f^{-1}_{*}\Delta + (\mu^{*}G - \epsilon A) + \mu^{-1}_{*}E + F' + \epsilon A. 
    \end{equation*}
    where $F' = \sum_{i} (1 - A_{Y, f^{-1}_{*}\Delta + E}(F_i)) \cdot F_i$ and $F_i$ ranges over all $\mu$-exceptional divisors. By our choice of $\mu$, we have $\lfloor F' \rfloor \leq 0$. Thus $\lfloor F' + \epsilon A \rfloor \leq 0$, and $\mu^{*}G - \epsilon A$ is ample for $0 < \epsilon \ll 1$. Now 
    \begin{equation*}
        \mathrm{LCP}(Z, \mu^{-1}_{*}f^{-1}_{*}\Delta + \mu^{-1}_{*}E + F' + \epsilon A) = \mathrm{QM}(Z, \mu^{-1}_{*}E) = \mathrm{QM}(Y, E). 
    \end{equation*}
    Choose a general $H \in |\mu^{*}G - \epsilon A|_{\mathbb{Q}}$, such that $(Z, \mu^{-1}_{*}f^{-1}_{*}\Delta + H + \mu^{-1}_{*}E + F' + \epsilon A)$ is sub-lc and 
    \begin{equation*}
        \mathrm{LCP}(Z, \mu^{-1}_{*}f^{-1}_{*}\Delta + H + \mu^{-1}_{*}E + F' + \epsilon A) = \mathrm{LCP}(Z, \mu^{-1}_{*}f^{-1}_{*}\Delta + \mu^{-1}_{*}E + F' + \epsilon A). 
    \end{equation*}
    Let $\Gamma = f_{*}\mu_{*}H + f_{*}E \sim_{\mathbb{Q}} f_{*}(G + E) \sim_{\mathbb{Q}} -(K_X + \Delta)$, then $\mu^{-1}_{*}f^{-1}_{*}\Gamma = H$, and
    \begin{equation*}
        \mu^{*}f^{*}(K_X + \Delta + \Gamma) = K_Z + \mu^{-1}_{*}f^{-1}_{*}\Delta + H + \mu^{-1}_{*}E + F' + \epsilon A. 
    \end{equation*}
    Hence $\Gamma$ is a special $\mathbb{Q}$-complement with respect to $(Z, F) \to (X, \Delta)$, and $\mathrm{LCP}(X, \Delta + \Gamma) = \sigma$. 
\end{proof}

\begin{lem} \label{special_val_equiv_conditions}
    Let $x \in (X, \Delta)$ be a klt singularity. Suppose $v \in \mathrm{Val}_{X,x}^{\mathrm{qm}}$, and let $\mathfrak{a}_{\bullet} = \mathfrak{a}_{\bullet}(v)$ denote the associated ideal sequence. Then the following are equivalent: 
    \begin{enumerate}[label=\emph{(\roman*)}, nosep]
        \item $v$ is special.  
        \item There exists a model $f \colon (Y, E) \to (X, \Delta)$ at $x$ of qdlt Fano type such that $v \in \mathrm{QM}(Y, E)$; 
        \item The associated graded ring 
        \begin{equation*}
            R_v = \mathrm{gr}_v(\mathscr{O}_{X}) = \bigoplus_{\lambda \geq 0} \mathfrak{a}_{\lambda}/\mathfrak{a}_{>\lambda}
        \end{equation*}
        is a finitely generated graded $\kappa(x)$-algebra, and the pair $(X_v = \mathop{\mathrm{Spec}(R_v)}, \Delta_v)$ is klt, where $\Delta_v$ is defined by the divisorial part of the initial ideal of $\Delta$. 
        \item There exists $\delta > 0$ such that if $D$ is an effective $\mathbb{Q}$-Cartier $\mathbb{Q}$-divisor on $X$ with $v(D) < \delta$, then $(X, \Delta + D)$ is klt and $v$ is special for $(X, \Delta + D)$. 
        \item There exists $\delta > 0$ such that if $D$ is an effective $\mathbb{Q}$-Cartier $\mathbb{Q}$-divisor on $X$ with $v(D) < \delta$, then $(X, \Delta + D)$ is klt and $v$ computes the lc threshold $\mathrm{lct}(X, \Delta + D; \mathfrak{a}_{\bullet})$. 
        \item For every effective $\mathbb{Q}$-Cartier $\mathbb{Q}$-divisor $D$ on $X$, there exists a $\mathbb{Q}$-complement $\Gamma$ of $(X, \Delta)$ such that $\Gamma \geq \epsilon D$ for some $\epsilon > 0$, and $v \in \mathrm{LCP}(X, \Delta + \Gamma)$. 
    \end{enumerate}
\end{lem}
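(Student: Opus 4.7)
The plan is to prove all six conditions equivalent via a cycle of implications centered on two pivots: (i) $\Leftrightarrow$ (ii), identifying special quasi-monomial valuations at $x$ with those lying on a Koll\'ar model at $x$; and (ii) $\Leftrightarrow$ (iii), the finite generation theorem for Koll\'ar models transported from \cite{XZ_stable_deg}. The perturbation conditions (iv), (v), (vi) are then linked to (i)--(iii) by explicit constructions of special $\mathbb{Q}$-complements on a Koll\'ar model together with linearity of log discrepancies on qdlt cones.

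For (i) $\Rightarrow$ (ii), I would start from a special quasi-monomial simplicial cone $\sigma \ni v$. Lemma \ref{special_cone_give_qdlt_Fano_type_model} produces a qdlt Fano type model $f \colon (Y, E) \to (X, \Delta)$ with $\mathrm{QM}(Y, E) = \sigma$. Since $v$ is centered at the closed point $x$, one extracts only the subset of $E$-components required to realize $v$ as a monomial valuation at a stratum lying over $x$ (via Lemma \ref{extract_div}), preserves the qdlt Fano type property via Lemma \ref{extract_special_div_is_qdlt_Fano_type}, and contracts via Lemma \ref{ample_model_of_qdltFano_is_Kollar} to a qdlt anti-canonical model with all exceptional components centered at $x$, i.e., a Koll\'ar model at $x$ containing $v$. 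The reverse (ii) $\Rightarrow$ (i) is immediate from Lemma \ref{special_cone_give_qdlt_Fano_type_model}. The equivalence (ii) $\Leftrightarrow$ (iii) is the finite generation theorem of \cite{XZ_stable_deg}: for $v \in \mathrm{QM}(Y, E)$ on a Koll\'ar model the Rees algebra of $\mathfrak{a}_{\bullet}(v)$ over $\mathscr{O}_{X,x}$ is finitely generated and the degeneration $(X_v, \Delta_v)$ is klt, proved by reduction to a toric cover and BCHM; conversely, one extracts a Koll\'ar model from the degeneration via an equivariant MMP on $\mathrm{Spec}_{\mathscr{O}_{X,x}} R_v$.

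For the perturbation conditions I would close the cycle (i) $\Rightarrow$ (vi) $\Rightarrow$ (v) $\Rightarrow$ (iv) $\Rightarrow$ (i). The step (i) $\Rightarrow$ (vi) is explicit: on a Koll\'ar model $f \colon (Y, E) \to (X, \Delta)$ at $x$ containing $v$, and given any effective $\mathbb{Q}$-Cartier $D$, choose $\epsilon > 0$ small so that $-(K_Y + f^{-1}_{*}\Delta + E) - \epsilon f^{*}D$ remains $f$-ample (using $f$-triviality of $f^{*}D$), take a general $H$ in that $\mathbb{Q}$-linear system, and set $\Gamma = f_{*}H + \epsilon D$; then $\Gamma \sim_{\mathbb{Q}} -(K_X + \Delta)$, $\Gamma \geq \epsilon D$, and $A_{X, \Delta + \Gamma}(E_i) = 0$ for every $i$, which by linearity on the simplicial cone forces $v \in \mathrm{LCP}(X, \Delta + \Gamma)$. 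For (vi) $\Rightarrow$ (v), one chooses $\delta$ so that the complement $\Gamma$ produced for a given $D$ with $v(D) < \delta$ dominates $D$ (rescaling by applying (vi) to $\lambda D$ for $\lambda$ large if necessary): the lc condition on $(X, \Delta + \Gamma)$ forces $A_{X, \Delta}(w) \geq w(\Gamma) \geq w(D)$ for every divisorial $w$, so $(X, \Delta + D)$ is klt; and $\Gamma - D \geq 0$ is a $\mathbb{Q}$-complement of $(X, \Delta + D)$ with $v$ as an lc place, whence the lc threshold identity $A_{X, \Delta + D}(v) = \mathrm{lct}(X, \Delta + D; \mathfrak{a}_{\bullet}(v))$ via the standard linearity argument together with Lemma \ref{lct_of_saturation}. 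For (v) $\Rightarrow$ (iv), the lc threshold identity feeds into the (iii) $\Rightarrow$ (ii) direction of the finite generation theorem applied to the perturbed pair $(X, \Delta + D)$, producing a Koll\'ar model of $(X, \Delta + D)$ accommodating $v$. Finally (iv) $\Rightarrow$ (i) is the $D = 0$ specialization.

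The main obstacle is the finite generation equivalence (ii) $\Leftrightarrow$ (iii), which is the deepest input and relies on the full Koll\'ar model machinery of \cite{XZ_stable_deg}. A secondary difficulty lies in (v) $\Rightarrow$ (iv), where one must extract a Koll\'ar model for the perturbed pair from only the lc threshold identity; this ultimately reduces to the same finite generation theorem. The bookkeeping of the constants $\delta, \epsilon$ across (iv)--(vi) also requires an Izumi-type estimate to translate the hypothesis $v(D) < \delta$ into uniform control over divisorial log discrepancies.
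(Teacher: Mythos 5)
Your proposal handles (i) $\Leftrightarrow$ (ii) $\Leftrightarrow$ (iii) the same way as the paper (deferred to \cite[Thm.\ 4.1]{XZ_stable_deg}), but the cycle through the perturbation conditions (iv)--(vi) has two genuine gaps, and the direction you chose runs the wrong way for the underlying logic.

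\emph{First gap: the explicit $\mathbb{Q}$-complement in (i) $\Rightarrow$ (vi) is wrong.} You propose $H \in |{-(K_Y + f^{-1}_{*}\Delta + E) - \epsilon f^{*}D}|_{\mathbb{Q}}$ general and $\Gamma = f_{*}H + \epsilon D$, claiming $A_{X,\Delta+\Gamma}(E_i) = 0$. But compute the crepant pullback: $f^{*}(K_X + \Delta + \Gamma) = K_Y + f^{-1}_{*}\Delta + H + E + \epsilon f^{*}D$ (by negativity, since both sides are $\mathbb{Q}$-linearly trivial over $X$ and push forward to $K_X + \Delta + \Gamma$). Since $f^{*}D = f^{-1}_{*}D + \sum_i \mathrm{ord}_{E_i}(D)\,E_i$, the $E_i$-coefficient on the right is $1 + \epsilon\,\mathrm{ord}_{E_i}(D)$, giving $A_{X,\Delta+\Gamma}(E_i) = -\epsilon\,\mathrm{ord}_{E_i}(D) < 0$ whenever $\mathrm{ord}_{E_i}(D) > 0$. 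So $(X, \Delta + \Gamma)$ is not lc and $\Gamma$ is not a $\mathbb{Q}$-complement. The point is that a Koll\'ar model for $(X,\Delta)$ is not a Koll\'ar model for $(X, \Delta + \epsilon D)$; you cannot just absorb $\epsilon D$ into the $\mathbb{Q}$-linear system. To produce the complement of (vi) you first need to know $v$ is special for the \emph{perturbed} pair $(X, \Delta + \epsilon D)$, and only then take a special complement of that pair and add $\epsilon D$ back.

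\emph{Second gap: (vi) does not yield the uniform $\delta$ of (v).} Condition (vi) reads ``for every $D$, there is some $\epsilon = \epsilon(D) > 0$,'' while (v) reads ``there is a single $\delta$ working for all $D$ with $v(D) < \delta$.'' Since $v(\Gamma) = A_{X,\Delta}(v)$ is fixed and $\Gamma \geq \epsilon D$, you only get $\epsilon \leq A_{X,\Delta}(v)/v(D)$, an upper bound on $\epsilon$; nothing in (vi) prevents $\epsilon(D)$ from degenerating to $0$. Your rescaling remark does not repair this: applying (vi) to $\lambda D$ can produce $\epsilon'(\lambda D)$ shrinking faster than $1/\lambda$. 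So the passage from $\forall D \exists \epsilon$ to $\exists \delta \forall D$ is exactly the missing uniformity.

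Both gaps are resolved by the step you relegate to bookkeeping. The paper runs the cycle the other way: (i--iii) $\Rightarrow$ (iv) $\Rightarrow$ (v) $\Rightarrow$ (vi) $\Rightarrow$ (i). The step (i--iii) $\Rightarrow$ (iv) is the substantive one: via (iii) one has the klt degeneration $(X_v, \Delta_v)$, and the Izumi-type inequality \cite[Lem.\ 3.5]{Zhuang_boundednessI} on $X_v$ shows that $v(D) = \mathrm{wt}_v(D_v) < \delta$ forces $(X_v, \Delta_v + D_v)$ to be klt for an explicit $\delta$ depending only on $\widehat{\mathrm{vol}}_{X_v,\Delta_v}(\mathrm{wt}_v)$, $\widehat{\mathrm{vol}}(x_v; X_v, \Delta_v)$ and $A_{X_v,\Delta_v}(\mathrm{wt}_v)$; applying (iii) $\Rightarrow$ (i) to $(X, \Delta + D)$ then gives specialness for the perturbed pair. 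This is where the uniform $\delta$ comes from, and it must precede (v) and (vi), not follow them. Then (iv) $\Rightarrow$ (v) is a short linearity argument (or \cite[Thm.\ 7.8]{JM}) applied to a $\mathbb{Q}$-complement of the perturbed pair, (v) $\Rightarrow$ (vi) takes $\epsilon < \delta/v(D)$ and quotes that an lc threshold computer is an lc place of a $\mathbb{Q}$-complement, and (vi) $\Rightarrow$ (i) follows the argument of \cite[Lem.\ 3.4]{XZ_stable_deg} with $D = f_{*}H$ for $-H$ supported on the exceptional locus.
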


\begin{proof}
    (i), (ii), and (iii) are equivalent by \cite[Thm.\ 4.1]{XZ_stable_deg}. 

    (i-iii) $\Rightarrow$ (iv). The valuation $v$ induces a canonical valuation $\mathrm{wt}_v$ on $X_v$, such that $\mathrm{wt}_v(D_v) = v(D)$ for every effective $\mathbb{Q}$-Cartier $\mathbb{Q}$-divisor $D$ on $X$. By the Izumi type inequality \cite[Lem.\ 3.5]{Zhuang_boundednessI}, 
    \begin{equation*}
        \mathrm{lct}(X_v, \Delta_v; D_v) \geq c_0\frac{\widehat{\mathrm{vol}}(x_v; X_v, \Delta_v)}{\widehat{\mathrm{vol}}_{X_v, \Delta_v}(\mathrm{wt}_v)} \cdot \frac{A_{X_v, \Delta_v}(\mathrm{wt}_v)}{\mathrm{wt}_v(D_v)}, 
    \end{equation*}
    where $x_v \in X_v$ is the vertex defined by the irrelevant ideal of $R_v$, and $c_0$ is a constant only depending on $\dim_x(X) = \dim(X_v)$. Thus, if 
    \begin{equation*}
        v(D) = \mathrm{wt}_v(D_v) < \delta \coloneqq c_0\frac{\widehat{\mathrm{vol}}(x_v; X_v, \Delta_v)}{\widehat{\mathrm{vol}}_{X_v, \Delta_v}(\mathrm{wt}_v)} A_{X_v, \Delta_v}(\mathrm{wt}_v), 
    \end{equation*}
    then $(X_v, \Delta_v + D_v)$ is klt. Since we know (iii) $\Rightarrow$ (i), this implies that $v$ is special for $(X, \Delta + D)$. 

    (iv) $\Rightarrow$ (v). It suffices to show that if $v$ is special for $(X, \Delta)$, then $v$ computes the lc threshold of $\mathfrak{a}_{\bullet}$. Suppose $v$ is special, then $v \in \mathrm{LCP}(X, \Delta + \Gamma)$ for some $\mathbb{Q}$-complement $\Gamma$. Hence we get the conclusion by \cite[Thm.\ 7.8]{JM}\footnote{By \cite[Thm.\ 7.8]{JM}, if a valuation $v$ computes the Arnold multiplicity $\mathrm{Arn} = 1/\mathrm{lct}$ of an ideal sequence, then $v$ also computes that of the ideal sequence $\mathfrak{a}_{\bullet}(v)$ associated with itself; note that computing $\mathrm{Arn}$ is equivalent to computing $\mathrm{lct}$. In our case, $v$ computes the $\mathrm{lct}$ of $\Gamma$, which is the same as the $\mathrm{lct}$ of $\{\mathscr{O}_X(-\lceil \lambda \Gamma \rceil)\}_{\lambda \geq 0}$. The result there is only stated on regular schemes with no boundary divisor, but the proof is the same as the one we give below.}. We also give a proof here: For $w \in \mathrm{Val}_{X,x}$, we have 
    \begin{equation*}
        A_{X,\Delta}(w) - w(\Gamma) = A_{X,\Delta+\Gamma}(w) \geq 0
    \end{equation*}
    since $(X, \Delta + \Gamma)$ is lc, and $w(\mathfrak{a}_{\bullet}) \leq w(\Gamma)/v(\Gamma)$. Meanwhile, $A_{X,\Delta}(v) = v(\Gamma)$ since $v \in \mathrm{LCP}(X, \Delta + \Gamma)$. Thus 
    \begin{equation*}
        \frac{A_{X,\Delta}(w)}{w(\mathfrak{a}_{\bullet})} \geq \frac{A_{X,\Delta}(w)}{w(\Gamma)} v(\Gamma) \geq v(\Gamma) = A_{X, \Delta}(v) = \frac{A_{X,\Delta}(v)}{v(\mathfrak{a}_{\bullet})}. 
    \end{equation*}
    Taking infimum over all $w \in \mathrm{Val}_{X,x}$, we get $\mathrm{lct}(X, \Delta; \mathfrak{a}_{\bullet}) = A_{X,\Delta}(v)/v(\mathfrak{a}_{\bullet})$. 

    (v) $\Rightarrow$ (vi). Let $\epsilon < \delta/v(D)$, so $(X, \Delta + \epsilon D)$ is klt and $v$ computes the lc threshold $\mathrm{lct}(X, \Delta + \epsilon D; \mathfrak{a}_{\bullet})$ by (v). Then the conclusion follows from the fact that a quasi-monomial valuation that computes the lc threshold of an ideal sequence is an lc place of a $\mathbb{Q}$-complement; see \cite[Rmk.\ 4.4]{LX_stability}. This fact is also the special case of Lemma \ref{specialize_val_computes_constant_lct} when the base $S$ is a point. 

    (vi) $\Rightarrow$ (i). The proof is the same as \cite[Lem.\ 3.4]{XZ_stable_deg}. Let $f \colon (Y, E) \to (X, \Delta)$ be a simple-toroidal model, and $y \in (Y, E)$ be a generic point of a stratum, such that $v \in \mathrm{QM}_y(Y, E)$, and there is an $f$-ample divisor $-A$ with $A \geq 0$ and $\mathrm{Supp}(A) = \mathrm{Ex}(f)$. Then $-A$ is ample since $X$ is affine. Let $H \in |{-A}|_{\mathbb{Q}}$ be a general $\mathbb{Q}$-divisor whose support does not contain any stratum of $(Y, \mathrm{Supp}(f^{-1}_{*}\Delta + E + \mathrm{Ex}(f)))$. Let $D = f_{*}H$, then there is a $\mathbb{Q}$-complement $\Gamma \geq \epsilon D$ of $(X, \Delta)$ such that $v \in \mathrm{LCP}(X, \Delta + \Gamma)$ by (vi). By construction, $f^{-1}_{*}\Gamma \geq \epsilon H$. That is, $\Gamma$ is a special $\mathbb{Q}$-complement with respect to $f \colon (Y, E) \to (X, \Delta)$. 
\end{proof}

\begin{rem} \label{estimate_special_threshold}
    We have $A_{X_v, \Delta_v}(\mathrm{wt}_v) = A_{X,\Delta}(v)$ and $\widehat{\mathrm{vol}}_{X_v, \Delta_v}(\mathrm{wt}_v) = \widehat{\mathrm{vol}}_{X, \Delta}(v)$ by \cite[Lem.\ 4.10]{LX_stability}. Thus we can take 
    \begin{equation*}
        \delta = c_0\frac{\widehat{\mathrm{vol}}(x_v; X_v, \Delta_v)}{\widehat{\mathrm{vol}}_{X, \Delta}(v)}A_{X,\Delta}(v)
    \end{equation*}
    in (iv) and (v). Moreover, if $v$ is the minimizer of the normalized volume function for $x \in (X, \Delta)$, then it is proved that one can take $\delta = A_{X,\Delta}(v)/(\dim X)$ in \cite[Lem.\ 3.3]{XZ_stable_deg} using K-stability of valuations, hence $v$ is special. 
\end{rem}

\begin{lem} \label{special_val_computes_lct_of_itself_uniquely}
    Let $x \in (X, \Delta)$ be a klt singularity. Suppose $v \in \mathrm{Val}_{X,x}$ is a special valuation for $(X, \Delta)$, and $\mathfrak{a}_{\bullet} = \mathfrak{a}_{\bullet}(v)$ is the ideal sequence associated to $v$. Then, up to scaling, $v$ is the unique valuation that computes the lc threshold $\mathrm{lct}(X, \Delta; \mathfrak{a}_{\bullet})$. 
\end{lem}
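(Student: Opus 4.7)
The plan is to exploit the perturbation characterization of special valuations (Lemma~\ref{special_val_equiv_conditions}(iv)--(v)) in order to sandwich any competing lct-computing valuation $w$ against $v$ on both sides, which will pin down $w = v$ on regular functions.

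After normalizing so that $v(\mathfrak{a}_{\bullet}) = 1$, Lemma~\ref{special_val_equiv_conditions}(v) gives $\mathrm{lct}(X, \Delta; \mathfrak{a}_{\bullet}) = A_{X,\Delta}(v)$. For any $w \in \mathrm{Val}_{X,x}$ that realizes the same lc threshold I would rescale so that $w(\mathfrak{a}_{\bullet}) = 1$, whence $A_{X,\Delta}(w) = A_{X,\Delta}(v)$. The main step is then to perturb by an effective $\mathbb{Q}$-Cartier $\mathbb{Q}$-divisor $D$ on $X$ with $v(D) < \delta$ small: parts~(iv) and~(v) of Lemma~\ref{special_val_equiv_conditions} guarantee that $(X, \Delta + D)$ is klt and that $v$ still computes $\mathrm{lct}(X, \Delta + D; \mathfrak{a}_{\bullet}) = A_{X,\Delta}(v) - v(D)$. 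Since $w$ is an admissible competitor,
\[
A_{X,\Delta}(v) - v(D) \leq \frac{A_{X, \Delta + D}(w)}{w(\mathfrak{a}_{\bullet})} = A_{X,\Delta}(v) - w(D),
\]
so $w(D) \leq v(D)$. Replacing $D$ by $\epsilon D$ for $0 < \epsilon \ll 1$ removes the smallness hypothesis, yielding $w(D) \leq v(D)$ for every effective $\mathbb{Q}$-Cartier $\mathbb{Q}$-divisor $D$ on $X$.

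To conclude, I would compare $v$ and $w$ on regular functions. For any nonzero $f \in \mathscr{O}_{X,x}$ the principal divisor $\mathrm{div}(f)$ is effective and Cartier in a neighborhood of $x$, so the divisor inequality just established gives $w(f) = w(\mathrm{div}(f)) \leq v(\mathrm{div}(f)) = v(f)$. For the reverse, $w(\mathfrak{a}_{\bullet}) = 1$ forces $w(\mathfrak{a}_{\lambda}) \geq \lambda$ for all $\lambda \geq 0$, and since $f \in \mathfrak{a}_{v(f)}(v) = \mathfrak{a}_{v(f)}$ this yields $w(f) \geq v(f)$. Hence $w(f) = v(f)$ on $\mathscr{O}_{X,x}$, which determines $w$ on $K(X)^{\times}$ by taking ratios, and undoing the two rescalings makes $w$ a positive scalar multiple of the original $v$. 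I do not anticipate a serious obstacle: the substantive input is the perturbation property~(iv) of the previous lemma, and once that is in hand the sandwich on $w(f)$ versus $v(f)$ is essentially formal.
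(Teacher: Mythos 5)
Your proof is correct and follows essentially the same strategy as the paper's: both use the perturbation characterization of special valuations (Lemma~\ref{special_val_equiv_conditions}(iv)--(v)) applied to $D = \mathrm{div}(f)$ to force $w(f) \leq v(f)$, and both derive $w(f) \geq v(f)$ from $w(\mathfrak{a}_{\bullet}) = v(\mathfrak{a}_{\bullet})$. The only difference is cosmetic: the paper phrases the perturbation step as a contradiction with $v$ computing $\mathrm{lct}(X, \Delta + \epsilon D; \mathfrak{a}_{\bullet})$, whereas you extract the inequality $w(D) \leq v(D)$ directly.
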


\begin{proof}
    We know $v$ computes the lc threshold of $\mathfrak{a}_{\bullet}$ by Lemma \ref{special_val_equiv_conditions}. Suppose $w \in \mathrm{Val}_{X,x}$ such that 
    \begin{equation*}
        \frac{A_{X,\Delta}(v)}{v(\mathfrak{a}_{\bullet})} = \mathrm{lct}(X, \Delta; \mathfrak{a}_{\bullet}) = \frac{A_{X,\Delta}(w)}{w(\mathfrak{a}_{\bullet})}. 
    \end{equation*}
    By rescaling, we may assume that $A_{X,\Delta}(v) = A_{X,\Delta}(w) = 1$. Hence 
    \begin{equation*}
        \inf_{\lambda > 0} \frac{w(\mathfrak{a}_{\lambda}(v))}{\lambda} = w(\mathfrak{a}_{\bullet}) = v(\mathfrak{a}_{\bullet}) = 1. 
    \end{equation*}
    Thus $w(f) \geq v(f)$ for every $f \in \mathscr{O}_{X}$. Assume that $w(f) > v(f)$ for some $f$, then $f \in \mathfrak{m}_x \smallsetminus \{0\}$, where $\mathfrak{m}_x \subset \mathscr{O}_{X}$ is the maximal ideal of $x$. Let $D = \mathrm{div}(f)$. Then, for $\epsilon > 0$, 
    \begin{equation*}
        \frac{A_{X, \Delta + \epsilon D}(w)}{w(\mathfrak{a}_{\bullet})} = 1 - \epsilon w(f) < 1 - \epsilon v(f) = \frac{A_{X, \Delta + \epsilon D}(v)}{v(\mathfrak{a}_{\bullet})}. 
    \end{equation*}
    This contradicts to that $v$ computes $\mathrm{lct}(X, \Delta + \epsilon D; \mathfrak{a}_{\bullet})$ for $0 < \epsilon \ll 1$ by Lemma \ref{special_val_equiv_conditions}. 
\end{proof}

\begin{rem}
    If $v = \mathrm{ord}_E$ is divisorial, and $f \colon (Y, E) \to (X, \Delta)$ is the prime blow-up where $-E$ is $f$-ample. Then the lemma follows from that $(Y, f^{-1}_{*}\Delta + E)$ is plt. 
\end{rem}

\subsection{Models of qdlt Fano type with ample exceptional divisors}

\begin{lem} \label{flat_Rees}
    Let $A$ be a ring. Suppose $M$ is an $A$-module, and $M_m \subset M$ is a sub-$A$-module for every $m = (m_1, \ldots, m_r) \in \mathbb{Z}^r$, such that $M_0 = M$ and $M_{m'} \subset M_m$ whenever $m' \geq m$, where 
    \begin{equation*}
        m' = (m_1', \ldots, m'_r) \geq (m_1, \ldots, m_r) = m
    \end{equation*}
    if and only if $m_i' \geq m_i$ for all $i = 1, \ldots, r$. Let $R = A[t_1, \ldots, t_r]$, and 
    \begin{equation*}
        N = \bigoplus_{m \in \mathbb{Z}^r} M_{m}t_1^{-m_1} \cdots t_r^{-m_r} \subset M[t_1^{\pm 1}, \ldots, t_r^{\pm 1}]. 
    \end{equation*}
    Assume $N$ is a flat $R$-module. Then for each $\xi = (\xi_1, \ldots, \xi_r) \in (\mathbb{R}_{>0})^r$ and $\lambda \in \mathbb{R}$, there is a canonical isomorphism 
    \begin{equation*}
        \frac{\sum_{\langle \xi, m \rangle \geq \lambda} M_m}{\sum_{\langle \xi, m \rangle > \lambda} M_m} \simeq \bigoplus_{\langle \xi, m \rangle = \lambda} M_m/M_{>m}, 
    \end{equation*}
    where $\langle \xi, m \rangle = \sum_{i=1}^{r} \xi_i m_i$, and $M_{>m} = \sum_{m' > m} M_{m'}$. 
\end{lem}

\begin{proof}
    Let $I_a = Rt_1^{a_1} \cdots t_r^{a_r} \subset R$ for each $a = (a_1, \ldots, a_r) \in \mathbb{N}^r$. Then
    \begin{equation*}
        I_a \otimes_R N \simeq I_aN = \bigoplus_{m \in \mathbb{Z}^r} M_{a+m}t_1^{-m_1} \cdots t_r^{-m_r} \subset N. 
    \end{equation*}
    Note that $I_a \cap I_b = I_{\max(a, b)}$, where $\max(a, b) = (\max(a_1, b_1), \ldots, \max(a_r,b_r))$. Since $N$ is flat over $R$, we have $I_aN \cap I_bN = I_{\max(a, b)}N$, that is, 
    \begin{equation*}
        M_{k} \cap M_{\ell} = M_{\max(k,\ell)}
    \end{equation*}
    for all $k, \ell \in \mathbb{Z}^r$. Since $M_0 = M$, we have $M_m = M_{\max(m,0)}$ for all $m \in \mathbb{Z}^r$. Hence it suffices to prove 
    \begin{equation*}
        \frac{\sum_{\langle \xi, a \rangle \geq \lambda} M_a}{\sum_{\langle \xi, a \rangle > \lambda} M_a} \simeq \bigoplus_{\langle \xi, a \rangle = \lambda} M_a/M_{>a}, 
    \end{equation*}
    where instead of $m \in \mathbb{Z}^r$, the summations only range over $a \in \mathbb{N}^r$. 

    Note that there is a canonical isomorphism 
    \begin{equation*}
        \frac{\sum_{\langle \xi, a \rangle \geq \lambda} I_a}{\sum_{\langle \xi, a \rangle > \lambda} I_a} \simeq \bigoplus_{\langle \xi, a \rangle = \lambda} At_1^{a_1} \cdots t_r^{a_r} \simeq \bigoplus_{\langle \xi, a \rangle = \lambda} I_a/I_{>a}. 
    \end{equation*}
    Since $N$ is flat over $R$, we get 
    \begin{equation*}
        \frac{\sum_{\langle \xi, a \rangle \geq \lambda} I_aN}{\sum_{\langle \xi, a \rangle > \lambda} I_aN} \simeq \bigoplus_{\langle \xi, a \rangle = \lambda} I_aN/I_{>a}N. 
    \end{equation*}
    Taking the degree $0$ component, we get the desired conclusion. 
\end{proof}

\begin{lem} \label{qdlt_Fano_type_with_given_ample_exc}
    Let $x \in (X, \Delta)$ be a klt singularity, and let $f \colon (Y, E = \sum_{i=1}^{r} E_i) \to (X, \Delta)$ be a model of qdlt Fano type at $x$. Then there exists an open convex cone $W \subset \mathbb{R}_{> 0}^r$ satisfying the following condition: For every $a = (a_1, \ldots, a_r) \in W$, there exists a birational contraction $\phi \colon Y \dashrightarrow Y'$ over $X$ that is a local isomorphism at the generic point of every stratum of $(Y, E)$ such that $f' \colon (Y', E' = \phi_{*}E) \to (X, \Delta)$ is a model of qdlt Fano type, and $-\sum_{i=1}^{r} a_iE_i'$ is $f'$-ample. 
\end{lem}

\begin{proof}
    By Lemma \ref{ample_model_of_qdltFano_is_Kollar}, we may assume $f \colon (Y, E) \to (X, \Delta)$ is the Koll\'ar model, that is, $(Y, f^{-1}_{*}\Delta + E)$ is qdlt and $-(K_Y + f^{-1}_{*}\Delta + E)$ is ample. For each $m = (m_1, \ldots, m_r) \in \mathbb{Z}^r$, let 
    \begin{equation*}
        I_m = f_{*}\mathscr{O}_Y\left(- \sum_{i=1}^{r} m_iE_i \right) \subset \mathscr{O}_X. 
    \end{equation*}
    Let $R_m = I_m/I_{>m}$ and $R = \bigoplus_{m \in \mathbb{N}^r} R_m$ be the associated graded ring. By \cite[\S4.2]{XZ_stable_deg}, $R$ is a finitely generated integral domain over $R_0 = \kappa(x)$, and $\mathrm{Spec}(R)$ is the equivariant degeneration of $x \in (X, \Delta)$ induced by the Koll\'ar model $(Y, E)$. Let $\overline{W} \subset \mathbb{R}_{\geq 0}^r$ be the closed convex cone generated by all $m \in \mathbb{N}^r$ such that $R_m \neq 0$, and let $W = \mathrm{int}(\overline{W})$ be its interior. Note that for all $a \in \overline{W} \cap \mathbb{Q}^r$, we have $R_{ka} \neq 0$ for all sufficiently divisible integers $k$ since $R$ is finitely generated and integral. We claim $\dim(\overline{W}) = r$, hence $W$ is a non-empty open subset in $\mathbb{R}_{>0}^r$. 

    For $\xi \in \mathbb{R}_{\geq 0}^r$, let $v_{\xi} \in \mathrm{QM}(Y, E) \subset \mathrm{Val}_{X,x}$ be the corresponding quasi-monomial valuation. Then its ideal sequence is 
    \begin{equation*}
        \mathfrak{a}_{\lambda}(v_{\xi}) = \sum_{\langle \xi, m \rangle \geq \lambda} I_m \subset \mathscr{O}_X
    \end{equation*}
    by \cite[Cor.\ 4.10]{XZ_stable_deg}. Since the extended Rees algebra $\bigoplus_{m \in \mathbb{Z}^r} I_mt_1^{-m_1} \cdots t_r^{-m_r}$ is flat over $\Bbbk[t_1, \ldots, t_r]$ by \cite[\S4.2]{XZ_stable_deg}, where $\Bbbk$ is the base field, we have 
    \begin{equation*}
        \mathfrak{a}_{\lambda}(v_{\xi})/\mathfrak{a}_{>\lambda}(v_{\xi}) \simeq \bigoplus_{\langle \xi, m \rangle = \lambda} R_m
    \end{equation*}
    by Lemma \ref{flat_Rees}. If we choose $\xi$ such that $\xi_1, \ldots, \xi_r$ are linearly independent over $\mathbb{Q}$, then $\mathrm{rat.rank}(v_{\xi}) = r$; on the other hand, the value group $\Gamma_{v_{\xi}}$ is generated by all $\langle \xi, m \rangle$ for $R_m \neq 0$. Hence we conclude that the linear subspace spanned by all such $m$ has dimension $r$. 

    Suppose $a = (a_1, \ldots, a_r) \in W$. Since $Y$ is of Fano type over $X$, we can take the relative ample model $\phi \colon Y \dashrightarrow Y'$ for $-\sum_{i=1}^{r} a_iE_i$ after a small $\mathbb{Q}$-factorial modification. Then we need to show $\phi$ satisfies all the desired conditions. For some $a' \in W \cap \mathbb{Q}^r$ near $a$, the divisor $-\sum_{i=1}^{r} a_i'E_i$ yields the same ample model. Hence we may assume that $a \in W \cap \mathbb{Q}^r$. 
    
    Note that 
    \begin{equation*}
        -(K_Y + f^{-1}_{*}\Delta + E) \sim_{X, \mathbb{Q}} -\sum_{i=1}^{r} A_{X,\Delta}(E_i) \cdot E_i
    \end{equation*}
    is ample. Then we can choose a general $\mathbb{Q}$-divisor $F \sim_{\mathbb{Q}} -\sum_{i=1}^{r} A_{X,\Delta}(E_i) \cdot E_i$ that does not contain any stratum of $(Y, E)$. Since $W$ is open, $a' = (a_i - \delta A_{X,\Delta}(E_i))_i \in W$ for some rational $0 < \delta \ll 1$. Then $R_{ka'} \neq 0$ for all sufficiently divisible $k$ since $R$ is an integral domain. Choose $g \in I_{ka'}$ such that its image in $R_{ka'}$ is non-zero. Let $\Gamma = \mathrm{div}_X(g)$, then 
    \begin{equation*}
        0 \sim f^{*}\Gamma = f^{-1}_{*}\Gamma + \sum_{i=1}^{r} ka_i'E_i. 
    \end{equation*}
    For $\xi \in \mathbb{R}_{\geq 0}^r$, we have $v_{\xi}(g) = \langle \xi, ka' \rangle$ since the image of $g$ in $R_{ka'}$ is non-zero; but we also have
    \begin{equation*}
        v_{\xi}(g) = v_{\xi}(f^{-1}_{*}\Gamma) + \sum_{i=1}^{r} ka_i' v_{\xi}(E_i) = v_{\xi}(f^{-1}_{*}\Gamma) + \sum_{i=1}^{r} ka_i'\xi_i. 
    \end{equation*}
    Thus $v_{\xi}(f^{-1}_{*}\Gamma) = 0$. It follows that $f^{-1}_{*}\Gamma$ does not contain any stratum of $(Y, E)$. Let $G = \frac{1}{k}f^{-1}\Gamma$. Then 
    \begin{equation*}
        G + \delta F \sim_{X, \mathbb{Q}} -\sum_{i=1}^{r} a_i E_i, 
    \end{equation*}
    and $\mathrm{Supp}(G + \delta F)$ does not contain any stratum of $(Y, E)$. By \cite[Lem.\ 2.3]{XZ_stable_deg}, for some $0 < \epsilon \ll 1$, there exists an effective $\mathbb{Q}$-divisor $H$ on $Y$ such that $(Y, f^{-1}_{*}\Delta + E + \epsilon(G + \delta F) + H)$ is qdlt, 
    \begin{equation*}
        \lfloor f^{-1}_{*}\Delta + E + \epsilon(G + \delta F) + H \rfloor = E
    \end{equation*}
    and $-(K_Y + f^{-1}_{*}\Delta + E + \epsilon(G + \delta F) + H)$ is ample. Add a general ample effective $\mathbb{Q}$-divisor, we may instead assume that 
    \begin{equation*}
        K_Y + f^{-1}_{*}\Delta + E + \epsilon(G + \delta F) + H \sim_{X, \mathbb{Q}} 0. 
    \end{equation*}
    In other words, let $\Delta' = \Delta + f_{*}H$, then 
    \begin{equation*}
        K_Y + f^{-1}_{*}\Delta' + E = K_Y + f^{-1}_{*}\Delta + E + H \sim_{X, \mathbb{Q}} -\epsilon(G + \delta F) \sim_{X, \mathbb{Q}} \sum_{i=1}^{r} \epsilon a_i E_i. 
    \end{equation*}
    So $(X, \Delta')$ is klt since it is crepant to $(Y, f^{-1}_{*}\Delta' + \sum_{i=1}^{r} (1-\epsilon a_i)E_i)$. Moreover, 
    \begin{equation*}
        -(K_Y + f^{-1}_{*}\Delta' + E + \epsilon G) \sim_{X, \mathbb{Q}} \epsilon \delta F
    \end{equation*}
    is ample, and $(Y, f^{-1}_{*}\Delta' + E + \epsilon G)$ is qdlt. Thus $f \colon (Y, E) \to (X, \Delta')$ is also a model of qdlt Fano type. Then the ample model $(Y', E' = \phi_{*}E)$ for $-\sum_{i=1}^{r} \epsilon a_i E_i$ is the corresponding Koll\'ar model for $(X, \Delta')$ by \cite[Prop.\ 3.9]{XZ_stable_deg}. Hence $(Y', E')$ is also a model of qdlt Fano type for $(X, \Delta)$. 
\end{proof}

\begin{rem}
    If we choose $a \in W$ such that $a_1, \ldots, a_r$ are linearly independent over $\mathbb{Q}$, then we get a model $f' \colon (Y', E') \to (X, \Delta)$ of qdlt Fano type such that each $E_i$ is $\mathbb{Q}$-Cartier, and there is an ample $\mathbb{Q}$-divisor $-A$ on $Y'$ with $A \geq 0$ and $\mathrm{Supp}(A) = \mathrm{Ex}(f')$. 
\end{rem}

\section{Families of Koll\'ar models} \label{sec: family_of_models}

\subsection{Degeneration of valuations via families of models}

In this subsection, let $S$ be a regular local scheme of dimension $d$, with the closed point $s \in S$ and the generic point $\eta \in S$. Let $H = \sum_{j=1}^{d} H_j$ be an snc divisor on $S$ defined by a regular system of parameters. The subscript $(-)_t$ will denote the fiber at a point $t \in S$ for schemes, divisors, and sheaves of modules over $S$. 

\begin{lem} \label{ACC_family}
    Let $\pi \colon (X, \Delta) \to S$ be a locally stable family of pairs. Then there exists $\epsilon > 0$ such that if $f \colon (Y, E) \to (X, \Delta)$ is a family of models where $E$ is $\mathbb{Q}$-Cartier and $(Y, f^{-1}_{*}\Delta + (1-\epsilon)E) \to S$ is locally stable, then $(Y, f^{-1}_{*}\Delta + E) \to S$ is also locally stable. 
\end{lem}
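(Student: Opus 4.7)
The plan is to rephrase the statement as a log canonical threshold gap and invoke the global ACC for log canonical thresholds of Hacon--McKernan--Xu to obtain a uniform $\epsilon$.

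First, by the criterion recalled in the Remark immediately preceding this lemma, local stability of $(Y, f^{-1}_{*}\Delta + tE) \to S$ for $t \in [0,1]$ is equivalent to the slc-ness of $(Y, f^{-1}_{*}\Delta + tE + (\pi \circ f)^{*}H)$. Setting
\[
D \coloneqq f^{-1}_{*}\Delta + (\pi \circ f)^{*}H,
\]
the hypothesis becomes that $(Y, D + (1-\epsilon)E)$ is slc, which by decreasing the coefficient of the effective divisor $E$ implies that $(Y, D)$ is slc and that
\[
\mathrm{lct}(Y, D; E) \geq 1 - \epsilon.
\]
The goal becomes showing $\mathrm{lct}(Y, D; E) \geq 1$, which translates back through the Remark to $(Y, f^{-1}_{*}\Delta + E) \to S$ being locally stable.

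Next, I would observe that the coefficients of $D$ lie in a finite subset of $[0,1]$ depending only on $\pi \colon (X, \Delta) \to S$. Indeed, $f^{-1}_{*}\Delta$ carries the (fixed, finite) coefficients of $\Delta$, and the components of $(\pi \circ f)^{*}H$ have positive integer multiplicities that must be $\leq 1$ by the slc hypothesis on $(Y, D + (1-\epsilon)E)$, hence must equal $1$; so $(\pi \circ f)^{*}H$ is reduced and $\mathrm{coef}(D) \subset I \coloneqq \mathrm{coef}(\Delta) \cup \{1\}$. With $\dim Y = \dim X$ also fixed, Hacon--McKernan--Xu's global ACC for log canonical thresholds applies to the set of values $\mathrm{lct}(Y', D'; E')$ with $\dim Y' = \dim X$, $(Y', D')$ lc, $\mathrm{coef}(D') \subset I$, and $E'$ a reduced $\mathbb{Q}$-Cartier divisor: this set satisfies ACC and in particular does not accumulate at $1$ from below. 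Choose $\epsilon > 0$ so that no such lct value lies in $(1-\epsilon, 1)$. Then the hypothesis $\mathrm{lct}(Y, D; E) \geq 1 - \epsilon$ forces $\mathrm{lct}(Y, D; E) \geq 1$, as required.

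The main step and main obstacle is ensuring that the constant $\epsilon$ extracted from ACC really is uniform over all admissible birational models $f \colon Y \to X$; this is built into Hacon--McKernan--Xu's theorem precisely because both the ambient dimension $\dim X$ and the coefficient set $I$ are intrinsic to $(X, \Delta) \to S$ alone, and do not depend on $f$. Secondary bookkeeping checks include the verification that $(Y, D)$ is lc (not merely slc, since $Y$ is normal) and that $K_Y + D + E$ is $\mathbb{Q}$-Cartier so that the lct is defined, both of which are immediate from the hypothesis that $(Y, f^{-1}_{*}\Delta + (1-\epsilon)E) \to S$ is a locally stable family together with the assumption that $E$ is $\mathbb{Q}$-Cartier.
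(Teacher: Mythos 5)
Your proof is correct and follows exactly the route the paper intends: the paper's own proof is a one‑line citation of the ACC for lc thresholds from Hacon--McKernan--Xu, and your argument supplies the bookkeeping (reduction via the criterion $(Y, f^{-1}_*\Delta + tE + (\pi\circ f)^*H)$ slc, the observation that coefficients land in the fixed finite set $\mathrm{coef}(\Delta)\cup\{1\}$, and the gap‑below‑$1$ consequence of ACC) that the paper leaves implicit. No gaps.
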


\begin{proof}
    We will apply the ACC of lc thresholds in \cite[Thm.\ 1.1]{HMX_ACC}. Let $\mathrm{Coeff}(\Delta) \subset \mathbb{R}$ denote the set of coefficients of $\Delta$. Let $\mathrm{LCT} \subset \mathbb{R}_{\geq 0}$ be the set of all numbers $\mathrm{lct}(W, D_W; M)$ where $(W, D_W)$ is an slc pair with $\dim(W) = \dim(X)$, with coefficients of $D_W$ in $\mathrm{Coeff}(\Delta) \cup \{1\}$, and $M$ is an effective non-zero $\mathbb{Q}$-Cartier $\mathbb{Z}$-divisor on $W$. Then $\mathrm{LCT}$ satisfies the ACC by \cite[Thm.\ 1.1]{HMX_ACC}; note that we may allow slc pairs $(W, D_W)$ by \cite[Thm.\ 5.38]{Kollar_singularity}. Since $\mathrm{LCT} \subset [0,\, 1]$ and $1 \in \mathrm{LCT}$, there exists $\epsilon > 0$ such that 
    \begin{equation*}
        \mathrm{LCT} \cap [1 - \epsilon,\, 1] = \{1\}
    \end{equation*}
    by the ACC. In our case, $(Y, f^{-1}_{*}\Delta + (1-\epsilon)E + f^{*}\pi^{*}H)$ is slc by \cite[Thm.\ 4.54]{Kol_fam}, that is, 
    \begin{equation*}
        1 - \epsilon \leq \mathrm{lct}(Y, f^{-1}_{*}\Delta + f^{*}\pi^{*}H; E) \in \mathrm{LCT}. 
    \end{equation*}
    Hence $\mathrm{lct}(Y, f^{-1}_{*}\Delta + f^{*}\pi^{*}H; E) = 1$, that is, $(Y, f^{-1}_{*}\Delta + E + f^{*}\pi^{*}H)$ is slc. So $(Y, f^{-1}_{*}\Delta + E) \to S$ is a locally stable family by \cite[Thm.\ 4.54]{Kol_fam}. 
\end{proof}

\begin{lem} \label{specialize_val_computes_constant_lct}
    Let $\pi \colon (X, \Delta) \to S$ with $x \in X(S)$ be a locally stable family of klt singularities, and $I_{\bullet}$ be an ideal sequence on $X$ cosupported on $x(S)$ such that
    \begin{equation*}
         \mathrm{lct}(X_{s}, \Delta_{s}; I_{s, \bullet}) = \mathrm{lct}(X_{\eta}, \Delta_{\eta}; I_{\eta, \bullet}) < \infty
    \end{equation*}
    Suppose $v_0 \in \mathrm{Val}_{X_{\eta}, x_{\eta}}^{\mathrm{qm}}$ computes the lc threshold of $I_{\eta, \bullet}$, and $\sigma \subset \mathrm{Val}_{X_{\eta}, x_{\eta}}$ is a quasi-monomial simplicial cone for $(X_{\eta}, \Delta_{\eta})$ such that $v_0 \in \sigma$. Then there exists a family of models at $x$ over $S$
    \begin{equation*}
        f \colon (Y, E = E_1 + \cdots + E_r) \to (X, \Delta)
    \end{equation*}
    such that the following hold: 
    \begin{enumerate}[label=\emph{(\arabic*)}, nosep]
        \item $E_1, \ldots, E_r$ are all the exceptional divisors of $f$, and the valuations $\mathrm{ord}_{E_{1}}, \ldots, \mathrm{ord}_{E_{r}}$ are centered at $x_{\eta}$ and span a simplicial cone $\tau \subset \sigma$ such that $v_0 \in \tau$; 
        \item $\pi \circ f \colon (Y, f^{-1}_{*}\Delta + E) \to S$ is a locally stable family; 
        \item there exists a relative $\mathbb{Q}$-Cartier $\mathbb{Q}$-divisor $\Gamma$ on $X/S$ such that $\pi \colon (X, \Delta + \Gamma) \to S$ is a locally stable family, $K_X + \Delta + \Gamma \sim_{S, \mathbb{Q}} 0$, and $\tau \subset \mathrm{LCP}(X_{\eta}, \Delta_{\eta} + \Gamma_{\eta})$. 
        \item $Y$ is $\mathbb{Q}$-factorial, and $-(K_Y + f^{-1}_{*}\Delta + E)$ is $f$-semi-ample.  
    \end{enumerate}
\end{lem}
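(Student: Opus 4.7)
The plan is to imitate the generic-fiber argument of \cite[Lem.\ 3.2]{XZ_stable_deg} to build a $\mathbb{Q}$-complement of $(X_\eta, \Delta_\eta)$ with a simplicial cone $\tau \ni v_0$ of lc places inside $\sigma$, spread this complement to a relative complement $\Gamma$ on $X/S$ using the constant-lc-threshold hypothesis, and then extract the edges of $\tau$ as divisors via Lemma \ref{extract_div}. First, after rescaling $v_0$ so that $v_0(I_{\eta,\bullet}) = 1$, we have $A_{X_\eta, \Delta_\eta}(v_0) = c := \mathrm{lct}(X_\eta, \Delta_\eta; I_{\eta, \bullet})$. A standard complement construction (cf.\ \cite[Rmk.\ 4.4]{LX_stability}, \cite[Thm.\ 4.48]{Xu_K-Stability_Book}) yields a $\mathbb{Q}$-complement $\Gamma_\eta \sim_{\mathbb{Q}} -(K_{X_\eta} + \Delta_\eta)$ with $\Gamma_\eta \geq \epsilon D_\eta$ for an effective $\mathbb{Q}$-Cartier $\mathbb{Q}$-divisor $D_\eta$ cut out from a truncation $I_{\eta,N}$, and with $v_0 \in \mathrm{LCP}(X_\eta, \Delta_\eta + \Gamma_\eta)$; a rational simplicial subcone $\tau \subset \sigma$ containing $v_0$ may then be chosen inside $\mathrm{LCP}(X_\eta, \Delta_\eta + \Gamma_\eta) \cap \sigma$.

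Let $\Gamma, D \subset X$ be the schematic closures of $\Gamma_\eta, D_\eta$; since $I_\bullet$ is cosupported on the flat section $\{x\}$, they are relative $\mathbb{Q}$-Cartier $\mathbb{Q}$-divisors with $K_X + \Delta + \Gamma \sim_{S,\mathbb{Q}} 0$ and $\Gamma \geq \epsilon D$. To verify local stability of $\pi \colon (X, \Delta + \Gamma) \to S$ it suffices, by \cite[Thm.\ 4.54]{Kol_fam}, to check that $(X, \Delta + \Gamma + \pi^{*}H)$ is slc; the hypothesis $\mathrm{lct}(X_s, \Delta_s; I_{s,\bullet}) = \mathrm{lct}(X_\eta, \Delta_\eta; I_{\eta,\bullet}) = c$ together with lower semicontinuity forces $\mathrm{lct}(X_t, \Delta_t; I_{t,\bullet}) \equiv c$ for all $t \in S$, so the lc threshold of $\Gamma$ along every fiber remains at least $1$ and the slc condition survives specialization. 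Now apply Lemma \ref{extract_div} to the divisorial valuations at the edges of $\tau$, which are lc places of $(X, \Delta + \Gamma)$: this produces a model $f \colon (Y, E = E_1 + \cdots + E_r) \to (X, \Delta)$ with $E$ equal to the full exceptional divisor, yielding (1), (2), and (4).

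By the crepant identity $K_Y + f^{-1}_{*}\Delta + f^{-1}_{*}\Gamma + E = f^{*}(K_X + \Delta + \Gamma)$ and local stability of $(X, \Delta + \Gamma) \to S$, the pair $(Y, f^{-1}_{*}\Delta + f^{-1}_{*}\Gamma + E + f^{*}\pi^{*}H)$ is slc; dropping the effective summand $f^{-1}_{*}\Gamma$ preserves slc, so $(Y, f^{-1}_{*}\Delta + E + f^{*}\pi^{*}H)$ is slc and hence $\pi \circ f \colon (Y, f^{-1}_{*}\Delta + E) \to S$ is locally stable, giving (3). The ``furthermore'' clause follows directly from the two options in Lemma \ref{extract_div}: option (i) produces a $\mathbb{Q}$-factorial $Y$, which is of Fano type over $X$ via $\Gamma$, so a relative MMP yields $-(K_Y + f^{-1}_{*}\Delta + E)$ $f$-semi-ample; option (ii) produces directly the $f$-ample $-A$ with $\mathrm{Supp}(A) = E$. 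The principal obstacle is keeping $(X, \Delta + \Gamma) \to S$ locally stable through specialization: the equality (not merely the inequality from lower semicontinuity) of the lc thresholds at the generic and closed points is exactly what is needed to rule out the appearance of a new lc center along $\pi^{*}H$.
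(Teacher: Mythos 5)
Your plan inverts the order of the paper's construction, and the inversion introduces a gap that the rest of the argument cannot repair. You build a $\mathbb{Q}$-complement $\Gamma_\eta$ on the generic fiber, take its schematic closure $\Gamma$, and assert that $\pi\colon(X,\Delta+\Gamma)\to S$ is locally stable because the lc threshold of $I_\bullet$ is constant along $S$. That assertion is the crux of the whole lemma, and it does not follow. The divisor $\Gamma_\eta$ is a \emph{general} member of a linear system that is defined only over $X_\eta$; its closure over the closed point $s$ is a specific divisor $\Gamma_s$ that is not general with respect to $X_s$, and nothing in the constant-lct hypothesis controls $\mathrm{lct}(X_s,\Delta_s;\Gamma_s)$. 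The hypothesis controls the lc thresholds of the truncations $I_{t,\lambda}$, not of a particular perturbed general member built over the generic fiber. Lower semi-continuity of lct for $I_\bullet$ tells you $\mathrm{lct}(X_s,\Delta_s;I_{s,\lambda})$ is close to $c/\lambda$ for $\lambda\gg 0$, but $(X_s,\Delta_s+\Gamma_s)$ being slc is an entirely separate claim that could fail if $\Gamma_s$ acquires worse singularities in specialization. With this step unsubstantiated, both statement (4) and your crepant-pullback argument for statement (3) collapse.

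The paper's proof avoids this by doing things in the opposite order, and the order matters. It first extracts the model $f\colon(Y,E)\to(X,\Delta)$ directly, using Diophantine approximation on $\sigma$ to find divisorial valuations $\mathrm{ord}_{E_i}$ with $A_{X,\Delta+I_\lambda^{c}}(E_i) < \epsilon$ for a carefully chosen truncation $I_\lambda$ and exponent $c=(1-\epsilon')/\lambda$; the crucial strict inequality $\mathrm{lct}(X_s,\Delta_s;I_{s,\lambda}^{c}) > 1$ coming from $\mathrm{lct}(X_s,\Delta_s;I_{s,\bullet})=1$ gives enough margin so that Lemma \ref{ACC_family} (ACC for lc thresholds) forces $(Y,f^{-1}_*\Delta+E)\to S$ to be locally stable. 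Only after the model is secured does the paper form $\Gamma = f_*G$ with $G$ a general member of $|-(K_Y+f^{-1}_*\Delta+E)|_\mathbb{Q}$ --- a linear system defined over all of $S$ --- so that $(Y, f^{-1}_*\Delta+G+E+f^*\pi^*H)$ is lc and local stability of $(X,\Delta+\Gamma)\to S$ follows by pushforward. If you want to build $\Gamma$ first, you would need a linear system defined over $S$ (not over $\eta$), and to extract a $\Gamma$ with uniformly controlled lct on every fiber; but exhibiting such a $\Gamma$ without already having the model $(Y,E)$ is exactly the difficulty the paper's Diophantine + ACC argument is designed to overcome.
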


\begin{proof}
    By rescaling the index of $I_{\bullet}$, we may assume that $\mathrm{lct}(X_{s}, \Delta_{s}; I_{s, \bullet}) = \mathrm{lct}(X_{\eta}, \Delta_{\eta}; I_{\eta, \bullet}) = 1$. The assumption that $v_0$ computes the lc threshold means 
    \begin{equation*}
        \mathrm{lct}(X_{\eta}, \Delta_{\eta}; I_{\eta, \bullet}) = \frac{A_{X_{\eta}, \Delta_{\eta}}(v_0)}{v_0(I_{\eta, \bullet})}. 
    \end{equation*}
    By rescaling $v_0$, we may assume $A_{X_{\eta}, \Delta_{\eta}}(v_0) = 1$. Hence $v_0(I_{\eta, \bullet}) = 1$. Since $X_{\eta} \subset X$ is the generic fiber, we identify $\mathrm{Val}_{X_{\eta}, x_{\eta}} = \mathrm{Val}_{X,x}$, so that $A_{X, \Delta}(v_0) = v_0(I_{\bullet}) = 1$. By inversion of adjunction, $(X, \Delta + \pi^{*}H)$ is dlt, and 
    \begin{equation*}
        \mathrm{lct}(X, \Delta + \pi^{*}H; I_{\bullet}) = \mathrm{lct}(X_s, \Delta_s; I_{s, \bullet}) = 1. 
    \end{equation*}
    It follows that $\mathrm{lct}(X, \Delta; I_{\bullet}) = 1$, and $v_0$ computes the lc threshold of $I_{\bullet}$ on $(X, \Delta)$. 

    Fix an isomorphism $\sigma \simeq \mathbb{R}_{\geq 0}^r$. Note that the function 
    \begin{equation*}
        A_{X, \Delta + I_{\bullet}}(v) \coloneqq A_{X, \Delta}(v) - v(I_{\bullet}) = A_{X, \Delta}(v) - \inf_{\lambda > 0} \frac{v(I_{\lambda})}{\lambda}
    \end{equation*}
    is non-negative, homogeneous, and convex on $\sigma \simeq (\mathbb{R}_{\geq 0})^r$. Choose a norm $\| \cdot \|$ on $\mathbb{R}^r$, and write $\mathrm{d}(\cdot, \cdot)$ for the induced metric on $\sigma$. Then $A_{X, \Delta+I_{\bullet}}$ is a Lipschitz function on $\sigma$.\footnote{Any finite convex function on a convex subset of $\mathbb{R}^r$ is locally Lipschitz. Here $A_{X, \Delta+I_{\bullet}}$ is homogeneous (of degree $1$) and $\sigma$ is a cone over a compact set, hence $A_{X, \Delta+I_{\bullet}}$ is globally Lipschitz on $\sigma$.} Hence there exists $C > 0$ such that 
    \begin{equation*}
        A_{X, \Delta + I_{\bullet}}(v) \leq A_{X, \Delta + I_{\bullet}}(v_0) + C \cdot \mathrm{d}(v_0, v) = C \cdot \mathrm{d}(v_0, v). 
    \end{equation*}
    Fix $0 < \epsilon < 1$ as in Lemma \ref{ACC_family} for $\pi \colon (X, \Delta) \to S$. By Diophantine approximation (see \cite[Lem.\ 2.7]{LX_stability} or \cite[Lem.\ 4.47]{Xu_K-Stability_Book}), there exist $v_1, \ldots, v_r \in \sigma$ and $q_1, \ldots, q_r \in \mathbb{Z}_{>0}$ such that: 
    \begin{enumerate}[label=(\arabic*), nosep]
        \item $v_0$ is in the convex cone $\tau$ spanned by $v_1, \ldots, v_r$; 
        \item $q_iv_i = c_i \mathop{\mathrm{ord}_{E_i}}$, where $E_i$ is a prime divisor over $X$ and $c_i \in \mathbb{Z}_{>0}$, for all $i = 1, \ldots, r$; 
        \item $\mathrm{d}(v_0, v_i) < \frac{\epsilon}{2Cq_i}$ for all $i = 1, \ldots, r$. 
    \end{enumerate}
    Thus 
    \begin{equation*}
        0 \leq A_{X, \Delta + I_{\bullet}}(E_i) = \frac{q_i}{c_i} A_{X, \Delta + I_{\bullet}}(v_i) < \frac{\epsilon}{2} 
    \end{equation*}
    for all $i = 1, \ldots, r$. 

    Choose $\epsilon' > 0$ such that $2\epsilon' \mathop{\mathrm{ord}_{E_i}}(I_{\bullet}) < \epsilon$ for all $i = 1, \ldots, r$. Since 
    \begin{equation*}
        \sup_{\lambda > 0} \lambda \cdot \mathrm{lct}(X_s, \Delta_s; I_{s,\lambda}) = \lim_{\lambda \to \infty} \lambda \cdot \mathrm{lct}(X_s, \Delta_s; I_{s,\lambda}) = \mathrm{lct}(X_s, \Delta_s; I_{s,\bullet}) = 1, 
    \end{equation*}
    we have 
    \begin{equation*}
        (1 - \epsilon')/\lambda < \mathrm{lct}(X_s, \Delta_s; I_{s, \lambda}) \leq 1/\lambda
    \end{equation*}
    for $\lambda \gg 0$. Write $c = (1 - \epsilon')/\lambda$, then $\mathrm{lct}(X, \Delta; I_{\lambda}^c) \geq \mathrm{lct}(X_s, \Delta_s; I_{s, \lambda}^c) > 1$, and
    \begin{equation*}
        \begin{aligned}
            A_{X, \Delta + I_{\lambda}^{c}}(E_i) &\leq A_{X, \Delta + I_{\bullet}}(E_i) + (\mathop{\mathrm{ord}_{E_i}}(I_{\bullet}) - c\mathop{\mathrm{ord}_{E_i}}(I_{\lambda})) \\
            &< \frac{\epsilon}{2} + (1 - \epsilon')\left( \mathop{\mathrm{ord}_{E_i}}(I_{\bullet}) - \frac{1}{\lambda}\mathop{\mathrm{ord}_{E_i}}(I_{\lambda}) \right) + \epsilon' \mathop{\mathrm{ord}_{E_i}}(I_{\bullet}) \\
            &< \frac{\epsilon}{2} + 0 + \frac{\epsilon}{2} = \epsilon 
        \end{aligned}
    \end{equation*}
    where we used the general fact that $v(I_{\bullet}) \leq v(I_{\lambda})/\lambda$ for every valuation $v$. 

    Write $a_i \coloneqq A_{X, \Delta + I_{\lambda}^{c}}(E_i) < \epsilon < 1$. By Lemma \ref{extract_div}, there exists a model 
    \begin{equation*}
        f \colon (Y, E) \to (X, \Delta)
    \end{equation*}
    such that $E = E_1 + \cdots + E_r$ is the sum of all exceptional divisors of $f$, and $Y$ is $\mathbb{Q}$-factorial. Since 
    \begin{equation*}
        \mathrm{lct}(X, \Delta + \pi^{*}H; I_{\lambda}^c) = \mathrm{lct}(X_s, \Delta_s; I_{s, \lambda}^c) > 1, 
    \end{equation*}
    $(Y, f^{-1}_{*}\Delta + \sum_{i=1}^{r} (1-a_i)E_{i} + f^{*}\pi^{*}H)$ is lc, that is, $\pi \circ f \colon (Y, f^{-1}_{*}\Delta + \sum_{i=1}^{r} (1-a_i)E_{i}) \to S$ is locally stable. Thus $\pi \circ f \colon (Y, f^{-1}_{*}\Delta + E) \to S$ is locally stable since $a_i < \epsilon$ for all $i$, by Lemma \ref{ACC_family}. 

    Since $Y$ is of Fano type over $X$, we can run a $-(K_Y + f^{-1}_{*}\Delta + E)$-MMP over $X$ and get a $\mathbb{Q}$-factorial good minimal model 
    \begin{equation*}
        \begin{tikzcd}
            Y \ar[rr, "\phi", dashed] \ar[rd, "f"'] & & Y' \ar[ld, "f'"] \\
            & X
        \end{tikzcd}
    \end{equation*}
    where $-(K_{Y'} + f'^{-1}_{*}\Delta + \phi_{*}E)$ is $f'$-semi-ample. As above, $(Y', f'^{-1}_{*}\Delta + \sum_{i=1}^{r} (1-a_i)\phi_{*}E_{i} + f'^{*}\pi^{*}H)$ is also lc, hence $\pi \circ f' \colon (Y', f'^{-1}_{*}\Delta + \phi_{*}E) \to S$ is a locally stable family by Lemma \ref{ACC_family}. Applying \cite[Lem.\ 3.38]{Kollar--Mori} in each step of the $-(K_Y + f^{-1}_{*}\Delta + E)$-MMP, we have 
    \begin{equation*}
        A_{Y, f^{-1}_{*}\Delta + E + f^{*}\pi^{*}H}(F) \geq A_{Y', f'^{-1}_{*}\Delta + \phi_{*}E + f'^{*}\pi^{*}H}(F)
    \end{equation*}
    for every prime divisor $F$, and the equality holds if and only if $\phi$ is a local isomorphism at $\mathrm{center}_Y(F)$. Thus $\phi$ is a local isomorphism at every lc center of $(Y, f^{-1}_{*}\Delta + E + f^{*}\pi^{*}H)$. In particular, $\phi$ does not contract any component $E_i$ of $E$. Thus $f' \colon (Y', E' = \phi_{*}E) \to (X, \Delta)$ satisfies (1), (2), and (4). 

    It remains to show (3). Since $-(K_{Y} + f^{-1}_{*}\Delta + E)$ is $f$-semi-ample and $X$ is affine, we can choose a general $\mathbb{Q}$-divisor
    \begin{equation*}
        G \in |{-(K_{Y} + f^{-1}_{*}\Delta + E)}|_{\mathbb{Q}}
    \end{equation*}
    such that $(Y, f^{-1}_{*}\Delta + G + E + f^{*}\pi^{*}H)$ is lc. Let $\Gamma = f_{*}G \sim_{\mathbb{Q}} -(K_X + \Delta)$, then 
    \begin{equation*}
        f^{*}(K_X + \Delta + \Gamma + \pi^{*}H) = K_Y + f^{-1}_{*}\Delta + G + E + f^{*}\pi^{*}H, 
    \end{equation*}
    so $\pi \colon (X, \Delta + \Gamma) \to S$ is locally stable, and each $E_i$ is an lc place of $(X, \Delta + \Gamma)$. Since the function 
    \begin{equation*}
        v \mapsto A_{X,\Delta + \Gamma}(v) = A_{X, \Delta}(v) - v(\Gamma)
    \end{equation*}
    is non-negative, homogeneous, and convex on $\tau$, while vanishing at $\mathrm{ord}_{E_i}$, we get $A_{X,\Delta + \Gamma}(v) = 0$ for all $v \in \tau$, that is, $\tau \subset \mathrm{LCP}(X, \Delta + \Gamma)$. 
\end{proof}

\begin{rem} \label{get_qdlt_model_over_generic_point}
    Assume $v_0$ is a special valuation, and $\sigma$ is a special quasi-monomial cone. Then $\tau$ is also special, so $f_{\eta} \colon (Y_{\eta}, E_{\eta}) \to (X_{\eta}, \Delta_{\eta})$ is a model of qdlt Fano type by Lemma \ref{minimal_model_with_given_exceptional_is_qdlt_Fano}, with $\mathrm{QM}(Y_{\eta}, E_{\eta}) = \tau$. In particular, $v_0 \in \mathrm{QM}(Y_{\eta}, E_{\eta})$. Since every lc center of $(Y, f^{-1}_{*}\Delta + E)$ lies over $\eta$ by \cite[Cor.\ 4.56]{Kol_fam}, $(Y, f^{-1}_{*}\Delta + E)$ is qdlt. 

    For $a = (a_1, \ldots, a_r) \in (\mathbb{R}_{\geq 0})^r$, let $f' \colon (Y', E') \to (X, \Delta)$ be the relative ample model for $-\sum_{i=1}^{r} a_i E_i$, where $E'$ is the strict transform of $E$. By Lemma \ref{qdlt_Fano_type_with_given_ample_exc}, for suitable choices of $a$, the generic fiber $(Y'_{\eta}, E'_{\eta})$ is also a model of qdlt Fano type for $(X_{\eta}, \Delta_{\eta})$. In particular, we may assume that $a_1, \ldots, a_r$ are linearly independent over $\mathbb{Q}$, then each $E_i'$ is $\mathbb{Q}$-Cartier, and there exists an $f'$-ample $\mathbb{Q}$-divisor $-A'$ on $Y'$ such that $A' \geq 0$ and $\mathrm{Supp}(A') = \mathrm{Ex}(f') = E'$. Note that $f' \colon (Y', E') \to (X, \Delta)$ satisfies (1), (2), and (3). 
\end{rem}

\begin{prg} \label{quasi-monomial_deg_val}
    Let $\pi \colon (X, \Delta) \to S$ be a family of pairs, and $f \colon (Y, E) \to (X, \Delta)$ be a family of models such that $E = \sum_{i=1}^{k} E_i$ is the sum of all $f$-exceptional divisors. Let $Z$ be an irreducible component of $\bigcap_{i\in I} E_i$ for some $I \subset \{1, \ldots, k\}$. Let $\zeta$ be the generic point of $Z$, and $z$ be a generic point of $Z_s = Z \cap Y_s$. Assume that $\pi \circ f \colon (Y, f^{-1}_{*}\Delta + E) \to S$ is locally stable in an open neighborhood of $z$, and $E_i$ is $\mathbb{Q}$-Cartier at $z$ for all $i \in I$. 
    
    Then $(Y, f^{-1}_{*}\Delta + E + f^{*}\pi^{*}H)$ is lc, and $Z$ is an lc center of $(Y, f^{-1}_{*}\Delta + E)$, in an open neighborhood of $z$. Thus $Z$ dominates $S$ (see \cite[Cor.\ 4.56]{Kol_fam}), that is, the generic point $\zeta$ of $Z$ lies over the generic point $\eta$ of $S$. Since $E_{i,\eta}$ is $\mathbb{Q}$-Cartier at $\zeta$ for all $i \in I$, the pair $(Y_{\eta}, E_{\eta})$ is simple-toroidal at $\zeta$, and the other components $E_{i',\eta}$ does not pass through $\eta$ for $i' \notin I$, by Lemma \ref{simple_toroidal}. Now
    \begin{equation*}
        z \in \bigcap_{i \in I} E_i \cap \bigcap_{j=1}^{d} f^{*}\pi^{*}H_j
    \end{equation*}
    where each $E_i$ is $\mathbb{Q}$-Cartier, and each $f^{*}\pi^{*}H_j$ is Cartier. Since $z$ has codimension at most $|I| + d$ in $Y$, $(Y, E + f^{*}\pi^{*}H)$ is simple-toroidal at $z$ by Lemma \ref{simple_toroidal}, hence $(Y_s, E_s)$ is simple-toroidal at $z$ as well. By Lemma \ref{degenerate_monomial_val}, we have a commutative diagram  
    \begin{equation*}
        \begin{tikzcd}
            \mathrm{QM}_{\zeta}(Y_{\eta}, E_{\eta}) \ar[r, "i", hook] \ar[d, "\simeq"] & \mathrm{QM}_{z}(Y, E + f^{*}\pi^{*}H) \ar[r, "p"] \ar[d, "\simeq"] & \mathrm{QM}_{z}(Y_s, E_s) \ar[d, "\simeq"] \\ 
            \mathbb{R}_{\geq 0}^{I} \ar[r, "{\alpha \mapsto (\alpha, 0)}"] & \mathbb{R}_{\geq 0}^{I} \times \mathbb{R}_{\geq 0}^{d} \ar[r, "{(\alpha, \beta) \mapsto \alpha}"] & \mathbb{R}_{\geq 0}^{I}
        \end{tikzcd}
    \end{equation*}
    giving a canonical isomorphism 
    \begin{equation*}
        \mathrm{QM}_{\zeta}(Y_{\eta}, E_{\eta}) \simeq \mathrm{QM}_{z}(Y_s, E_s)
    \end{equation*}
    which maps $E_{i, \eta}$ to the unique irreducible component $E_{i,s}'$ of $E_{i,s}$ containing $z$. 
\end{prg}

\begin{defn}
    Keep the notations in \ref{quasi-monomial_deg_val}. For a quasi-monomial valuation $v^{\alpha}_{\eta} \in \mathrm{QM}_{\zeta}(Y_{\eta}, E_{\eta}) \subset \mathrm{Val}_{X_{\eta}}^{\mathrm{qm}}$, we say its image $v^{\alpha}_s \in \mathrm{QM}_{z}(Y_s, E_s) \subset \mathrm{Val}_{X_s}^{\mathrm{qm}}$ is a \emph{quasi-monomial degeneration} of $v^{\alpha}_{\eta}$, or $v^{\alpha}_{\eta}$ \emph{degenerates} to $v^{\alpha}_s$, via the family of models $f \colon (Y, E) \to (X, \Delta)$. 
\end{defn}

\begin{lem} \label{compare_qm_deg_val}
    Let $\pi \colon (X, \Delta) \to S$ be a family of pairs. Suppose $v_{\eta} \in \mathrm{Val}_{X_{\eta}}^{\mathrm{qm}}$ degenerates to $v_s \in \mathrm{Val}_{X_{s}}^{\mathrm{qm}}$ via a family of models $f \colon (Y, E = \sum_{i=1}^{k} E_i) \to (X, \Delta)$ over $S$. Then the following hold: 
    \begin{enumerate}[label=\emph{(\arabic*)}, nosep]
        \item $v_{\eta}(\mathfrak{a}_{\eta}) \leq v_{s}(\mathfrak{a}_s)$ for every coherent ideal $\mathfrak{a} \subset \mathscr{O}_X$; 
        \item if $K_X + \Delta$ is $\mathbb{Q}$-Cartier, then $A_{X_{\eta}, \Delta_{\eta}}(v_{\eta}) = A_{X_s, \Delta_s}(v_s)$; 
        \item if $\Gamma$ is a relative $\mathbb{Q}$-Cartier $\mathbb{Q}$-divisor on $X/S$ such that $\pi \colon (X, \Delta + \Gamma) \to S$ is locally stable and $v_{\eta} \in \mathrm{LCP}(X_{\eta}, \Delta_{\eta} + \Gamma_{\eta})$, then $v_{s} \in \mathrm{LCP}(X_{s}, \Delta_{s} + \Gamma_{s})$ and $v_{\eta}(\Gamma_{\eta}) = v_s(\Gamma_s)$. 
    \end{enumerate}
\end{lem}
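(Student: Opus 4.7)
The plan is to exploit the setup of Definition \ref{quasi-monomial_deg_val}: the valuations $v_\eta$ and $v_s$ both correspond to the same vector $\alpha \in \mathbb{R}_{\geq 0}^{I}$ under the canonical isomorphisms of Lemma \ref{degenerate_monomial_val} applied to the simple-toroidal pair $(Y, E + f^{*}\pi^{*}H)$ at $z$, with $E$ playing the role of the horizontal divisors and $f^{*}\pi^{*}H$ playing the role of $F$. All three statements will follow from this identification plus the behavior of the three objects (ideals, log discrepancies, and complements) under restriction to fibers in a locally stable family.

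For (1), choose generators $g_1, \ldots, g_k$ of $f^{-1}\mathfrak{a} \cdot \mathscr{O}_{Y,z}$. Since $v_\eta(\mathfrak{a}_\eta) = \min_i v_\eta(g_i)$ and $v_s(\mathfrak{a}_s) = \min_i v_s(\bar{g}_i)$ (where $\bar{g}_i \in \mathscr{O}_{Y_s, z}$ is the restriction), Lemma \ref{degenerate_monomial_val} applied to each $g_i$ gives $v_\eta(g_i) \leq v_s(\bar{g}_i)$, and taking the minimum yields the claim. For (2), the earlier lemma computing log discrepancies on QM cones gives $A_{X_\eta, \Delta_\eta}(v_\eta) = \sum_{i \in I} \alpha_i \cdot A_{X_\eta, \Delta_\eta}(E_{i,\eta})$ and $A_{X_s, \Delta_s}(v_s) = \sum_{i \in I} \alpha_i \cdot A_{X_s, \Delta_s}(E_{i,s}')$, where $E_{i,s}'$ is the component of $E_{i,s}$ containing $z$. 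So it suffices to show $A_{X_\eta, \Delta_\eta}(E_{i, \eta}) = A_{X_s, \Delta_s}(E_{i,s}')$. I will use the relative log discrepancy divisor $D \coloneqq K_{Y/S} - f^{*}(K_{X/S} + \Delta)$, which is a well-defined $\mathbb{Q}$-Cartier $\mathbb{Q}$-divisor (since $K_X + \Delta$ is $\mathbb{Q}$-Cartier, and $\pi \circ f$ is locally stable so $K_{Y/S}$ is $\mathbb{Q}$-Cartier relative to $S$). Restriction to each fiber $Y_t$ commutes with the formation of $K_{(-)/S}$ and with pullback along $f$, so $D|_{Y_\eta}$ and $D|_{Y_s}$ compute the log discrepancies on the respective fibers; since each $E_i$ is $\mathbb{Q}$-Cartier at $z$ and $E_{i,s}$ appears with multiplicity one in $E_i|_{Y_s}$, the coefficient $\mathrm{ord}_{E_i}(D)$ transfers to both $\mathrm{ord}_{E_{i,\eta}}(D|_{Y_\eta})$ and $\mathrm{ord}_{E_{i,s}'}(D|_{Y_s})$, yielding the desired equality.

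For (3), combine the previous two parts. By (2) applied to the pair $(X, \Delta)$, we have $A_{X_\eta, \Delta_\eta}(v_\eta) = A_{X_s, \Delta_s}(v_s)$. The lc place assumption gives $A_{X_\eta, \Delta_\eta}(v_\eta) = v_\eta(\Gamma_\eta)$, and local stability of $(X, \Delta + \Gamma) \to S$ implies $(X_s, \Delta_s + \Gamma_s)$ is lc, hence $A_{X_s, \Delta_s}(v_s) \geq v_s(\Gamma_s)$. Combined with (1), this produces the chain
\begin{equation*}
    v_\eta(\Gamma_\eta) = A_{X_\eta, \Delta_\eta}(v_\eta) = A_{X_s, \Delta_s}(v_s) \geq v_s(\Gamma_s) \geq v_\eta(\Gamma_\eta),
\end{equation*}
which forces equality throughout, giving $v_\eta(\Gamma_\eta) = v_s(\Gamma_s)$ and $v_s \in \mathrm{LCP}(X_s, \Delta_s + \Gamma_s)$.

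The main obstacle I expect is the bookkeeping in (2): making sure that the coefficient of each $E_i$ in the discrepancy divisor is indeed preserved when passing to the closed fiber, which requires care about relative versus absolute canonical divisors, the hypothesis that each $E_i$ is $\mathbb{Q}$-Cartier at $z$ from Definition \ref{quasi-monomial_deg_val}, and the fact that $E_{i,s}'$ occurs with multiplicity one in the Mumford restriction $E_i|_{Y_s}$. Everything else is a direct application of Lemma \ref{degenerate_monomial_val} and the formulas for $A$ on quasi-monomial cones.
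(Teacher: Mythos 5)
Your proposal is correct and follows essentially the same route as the paper's proof: part (1) from Lemma \ref{degenerate_monomial_val}, part (2) by linearity of the log discrepancy on the quasi-monomial cone together with an adjunction argument showing $A_{X_\eta, \Delta_\eta}(E_{i,\eta}) = A_{X_s, \Delta_s}(E_{i,s}')$, and part (3) by combining (1) and (2) with sub-lc-ness of $(X_s, \Delta_s + \Gamma_s)$. The only cosmetic difference is in the bookkeeping for (2): you phrase the discrepancy computation via the relative divisor $K_{Y/S} - f^*(K_{X/S}+\Delta)$ and restrict it to fibers, whereas the paper writes the absolute identity $f^*(K_X+\Delta+\pi^*H) = K_Y + f^{-1}_*\Delta + \sum_i(1-A_{X,\Delta}(E_i))E_i + f^*\pi^*H$ and invokes adjunction along $Y_\eta \subset Y$ and $Y_s \subset Y$ -- the two are interchangeable, and both hinge on exactly the points you flagged ($E_i$ being $\mathbb{Q}$-Cartier at $z$ and $E'_{i,s}$ appearing with multiplicity one, both guaranteed by the simple-toroidal structure at $z$ from Definition \ref{quasi-monomial_deg_val}).
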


\begin{proof}
     Suppose $Z$ is an irreducible component $\bigcap_{i\in I} E_i$ as in Definition \ref{quasi-monomial_deg_val}, such that $v_{\eta}$ maps to $v_s$ in 
     \begin{equation*}
         \mathrm{QM}_{\zeta}(Y_{\eta}, E_{\eta}) \simeq \mathbb{R}_{\geq 0}^r \simeq \mathrm{QM}_z(Y_s, E_s), 
     \end{equation*}
     where $\zeta$ is the generic point of $Z_{\eta}$, and $z$ is a generic point of $Z_s$, and they are given by $\alpha = (\alpha_i) \in \mathbb{R}_{\geq 0}^{I}$ in coordinates. Then (1) follows from Lemma \ref{degenerate_monomial_val}. 

     (2). Write 
     \begin{equation*}
         f^{*}(K_X + \Delta + \pi^{*}H) = K_Y + f^{-1}_{*}\Delta + \sum_{i=1}^{k} (1 - A_{X,\Delta}(E_i)) \cdot E_i + f^{*}\pi^{*}H. 
     \end{equation*}
     Restricting to $f_{\eta} \colon (Y_{\eta}, \Delta_{\eta}) \to (X_{\eta}, \Delta_{\eta})$ and $f_{s} \colon (Y_{s}, \Delta_{s}) \to (X_{s}, \Delta_{s})$, respectively, by adjunction we get 
     \begin{equation*}
         A_{X_{\eta},\Delta_{\eta}}(E_{i,\eta}) = A_{X,\Delta}(E_i) = A_{X_s, \Delta_s}(E_{i,s}')
     \end{equation*}
     for every component $E_{i,s}'$ of $E_{i,s}$. Hence $A_{X_{\eta}, \Delta_{\eta}}(v_{\eta}) = \sum_{i \in I} \alpha_i A_{X, \Delta}(E_i) = A_{X_{s}, \Delta_{s}}(v_{s})$. 

     (3). By (1) and (2) we have 
     \begin{equation*}
         A_{X_s, \Delta_s + \Gamma_s}(v_s) = A_{X_s, \Delta_s}(v_s) - v_s(\Gamma_s) \leq A_{X_{\eta}, \Delta_{\eta}}(v_{\eta}) - v_{\eta}(\Gamma_{\eta}) = A_{X_{\eta}, \Delta_{\eta} + \Gamma_{\eta}}(v_{\eta}) = 0,  
     \end{equation*}
     while $A_{X_s, \Delta_s + \Gamma_s}(v_s) \geq 0$ since $(X_s, \Delta_s + \Gamma_s)$ is slc. Thus all the inequalities above must be equalities, that is, $A_{X_s, \Delta_s + \Gamma_s}(v_s) = 0$ and $v_s(\Gamma_s) = v_{\eta}(\Gamma_{\eta})$. 
\end{proof}

\begin{lem} \label{construct_family_of_Kol_model}
    Let $\pi (X, \Delta) \to S$ with $x \in X(S)$ be a locally stable family of klt singularities, and $I_{\bullet}$ be an ideal sequence on $X$ cosupported on $x(S)$ such that
    \begin{equation*}
         \mathrm{lct}(X_{s}, \Delta_{s}; I_{s, \bullet}) = \mathrm{lct}(X_{\eta}, \Delta_{\eta}; I_{\eta, \bullet}) < \infty. 
    \end{equation*}
    Suppose $v_s \in \mathrm{Val}_{X_s, x_s}^{\mathrm{qm}}$ and $v_{\eta} \in \mathrm{Val}_{X_{\eta}, x_{\eta}}^{\mathrm{qm}}$ are special valuations with 
    \begin{equation*}
        A_{X_s, \Delta_s}(v_s) = A_{X_{\eta}, \Delta_{\eta}}(v_{\eta}) = 1, 
    \end{equation*}
    satisfying the following: for every relative effective $\mathbb{Q}$-Cartier divisor $D$ on $X/S$ with $v_{s}(D_s) = v_{\eta}(D_{\eta})$, there exists $\epsilon > 0$ such that $\pi \colon (X, \Delta + \epsilon D) \to S$ has klt fibers, and $v_s$ and $v_{\eta}$ are, up to scaling, unique quasi-monomial valuations computing lc thresholds $\mathrm{lct}(X_s, \Delta_s + \epsilon D_s; I_{s, \bullet})$ and $\mathrm{lct}(X_{\eta}, \Delta_{\eta} + \epsilon D_{\eta}; I_{\eta, \bullet})$, respectively. Then there exists a locally stable family of Koll\'ar models at $x$
    \begin{equation*}
        \bar{f} \colon (\overline{Y}, \overline{E}) \to (X, \Delta)
    \end{equation*}
    such that $v_s \in \mathrm{QM}(\overline{Y}_s, \overline{E}_s)$, $v_{\eta} \in \mathrm{QM}(\overline{Y}_{\eta}, \overline{E}_{\eta})$, and $v_{\eta}$ degenerates to $v_s$. 
\end{lem}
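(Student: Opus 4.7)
First, I would apply Lemma \ref{specialize_val_computes_constant_lct} to $(I_\bullet, v_\eta)$ with a quasi-monomial simplicial cone containing $v_\eta$. This yields a family of models $f \colon (Y, E = E_1 + \cdots + E_r) \to (X, \Delta)$ over $S$ with $\pi \circ f$ locally stable, $v_\eta \in \tau := \mathrm{QM}(Y_\eta, E_\eta)$, and a relative $\mathbb{Q}$-complement $\Gamma$ of $(X, \Delta)$ with $\tau \subset \mathrm{LCP}(X_\eta, \Delta_\eta + \Gamma_\eta)$. Since $v_\eta$ is special, by the Remark following Lemma \ref{specialize_val_computes_constant_lct} I may arrange that $f_\eta \colon (Y_\eta, E_\eta) \to (X_\eta, \Delta_\eta)$ is of qdlt Fano type with $\tau = \mathrm{QM}(Y_\eta, E_\eta)$ and $-(K_Y + f^{-1}_*\Delta + E)$ is $f$-ample. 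Let $w_s \in \mathrm{QM}_z(Y_s, E_s)$ denote the quasi-monomial degeneration of $v_\eta$ via $f$. By Lemma \ref{compare_qm_deg_val}, $A_{X_s, \Delta_s}(w_s) = A_{X_\eta, \Delta_\eta}(v_\eta) = 1$ and $w_s(I_{s, \bullet}) \geq v_\eta(I_{\eta, \bullet})$. The hypothesis with $D = 0$ says $v_\eta$ computes $\mathrm{lct}(X_\eta, \Delta_\eta; I_{\eta, \bullet})$, so $v_\eta(I_{\eta, \bullet}) = 1/\mathrm{lct}(X_\eta, \Delta_\eta; I_{\eta, \bullet})$; combined with the equality of lc thresholds across fibers, this forces $w_s$ to compute $\mathrm{lct}(X_s, \Delta_s; I_{s, \bullet})$ with $w_s(I_{s, \bullet}) = v_\eta(I_{\eta, \bullet})$. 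The uniqueness hypothesis (still with $D = 0$) then gives $w_s = c v_s$ for some $c > 0$, and the normalization forces $c = 1$; thus $v_\eta$ degenerates to $v_s$ via $f$.

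The pair $(Y_s, E_s)$ need not be qdlt at $z$, so $f$ is not yet a family of Koll\'ar models. To upgrade, I would enlarge the cone by the fibral parameters. By Lemma \ref{simple_toroidal}, $(Y, E + f^*\pi^*H)$ is simple-toroidal at $z$, and I consider the extended rational simplicial cone $\bar{\tau} := \mathrm{QM}_z(Y, E + f^*\pi^*H) \simeq \tau \times \mathbb{R}_{\geq 0}^d$. Its elements are the weighted blow-up valuations $v_{\alpha, \beta}$ combining $v_\alpha \in \tau$ with weights $\beta = (\beta_1, \ldots, \beta_d)$ on the fibral parameters $t_1, \ldots, t_d$, and by Lemma \ref{degenerate_monomial_val}, $v_s$ arises as the limit of $v_{\alpha_0, \beta}$ as each $\beta_j \to \infty$, where $v_\eta = v_{\alpha_0}$. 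The divisor $\Gamma^* := \Gamma + \pi^*H$ is a $\mathbb{Q}$-complement of $(X, \Delta)$: it is effective, $(X, \Delta + \Gamma^*)$ is lc by inversion of adjunction from the locally stable family $(X, \Delta + \Gamma) \to S$, and $K_X + \Delta + \Gamma^* \sim_{\mathbb{Q}} 0$ since $\pi^*H \sim 0$ on the local base $S$. Each $E_i$ (by choice of $\Gamma$) and each $f^*\pi^*H_j$ is an lc place of $(X, \Delta + \Gamma^*)$, so by convexity $\bar{\tau} \subset \mathrm{LCP}(X, \Delta + \Gamma^*)$.

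The crux is to verify that $\bar{\tau}$ is \emph{special}: equivalently, that (possibly after modification) $\Gamma^*$ is a special $\mathbb{Q}$-complement with respect to the simple-toroidal model $f \colon (Y, E + f^*\pi^*H) \to (X, \Delta)$ at $z$, i.e., that $f^{-1}_*\Gamma^*$ dominates an $f$-ample effective $\mathbb{Q}$-divisor $G$ whose support avoids every stratum of $(Y, E + f^*\pi^*H)$. Combining the $f$-ampleness of $-(K_Y + f^{-1}_*\Delta + E)$ with the perturbation characterization of special valuations (Lemma \ref{special_val_equiv_conditions}(vi)) applied to $v_\eta$, the general representative of $\Gamma$ can be perturbed to produce such $G$ without altering the cone of lc places on the generic fiber; this is the main technical step. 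Once $\bar{\tau}$ is known to be special, Lemma \ref{special_cone_give_qdlt_Fano_type_model} (i.e., \cite[Thm.\ 3.14]{XZ_stable_deg}) produces a model $\bar{f} \colon (\bar{Y}, \bar{E}^{\sharp}) \to (X, \Delta)$ of qdlt Fano type with $\mathrm{QM}(\bar{Y}, \bar{E}^{\sharp}) = \bar{\tau}$. Writing $\bar{E}^{\sharp} = \bar{E} + \bar{f}^*\pi^*H$, where $\bar{E}$ collects the components centered at $x$, the Remark after Definition \ref{def:family_of_Kol_model} identifies $\bar{f} \colon (\bar{Y}, \bar{E}) \to (X, \Delta)$ as a locally stable family of Koll\'ar models at $x$ over $S$. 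By construction $v_\eta \in \mathrm{QM}(\bar{Y}_\eta, \bar{E}_\eta)$, $v_s \in \mathrm{QM}(\bar{Y}_s, \bar{E}_s)$, and $v_\eta$ degenerates to $v_s$ via $\bar{f}$. The hardest part will be engineering the perturbation so as to be compatible with the family structure over $S$, while simultaneously preserving the $f$-ampleness and stratum-avoidance conditions needed to make $\bar{\tau}$ special.
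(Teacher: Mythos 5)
Your outline correctly identifies the skeleton of the argument (apply Lemma \ref{specialize_val_computes_constant_lct} with $D=0$, match the degeneration of $v_\eta$ with $v_s$ via the uniqueness hypothesis, pass to the extended cone $\bar\tau = \mathrm{QM}_z(Y,E+f^*\pi^*H)$, and appeal to Lemma \ref{special_cone_give_qdlt_Fano_type_model}) and you correctly flag where the real difficulty lies. But you do not resolve that difficulty, and the mechanism you suggest for it would not work.

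You propose to make $\bar\tau$ special by perturbing the $\mathbb{Q}$-complement $\Gamma$ obtained from the $D=0$ application, invoking Lemma \ref{special_val_equiv_conditions}(vi) for $v_\eta$. That lemma is a statement about $(X_\eta,\Delta_\eta)$ only; it produces a $\mathbb{Q}$-complement over the generic fiber, and gives you no control whatsoever over what happens at $z$ in the closed fiber. Nothing in your argument binds the perturbation to a complement of the \emph{total space} whose strict transform on a suitable log resolution dominates an $f$-ample divisor supported on the exceptional locus and avoiding all strata near $z$. The missing idea is that the lemma's own hypothesis --- the uniqueness of lc-threshold computers for \emph{all} perturbations $\Delta+\epsilon D$ with $v_s(D_s)=v_\eta(D_\eta)$ --- must be invoked a \emph{second} time, with a $D$ that is engineered rather than generic. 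Concretely: take a log resolution $g = f\circ\mu \colon (W,F)\to (X,\Delta)$ that is a local isomorphism over $z$ and over the generic point of $Y_s$, with a $g$-ample (hence ample, since $X$ is affine) divisor $-A$ supported on $\mathrm{Ex}(g)$; choose a general $G\in|{-A}|_{\mathbb{Q}}$ avoiding all strata; and set $D=g_*G$. The crucial verification, which you never carry out and which your version has no analogue of, is that for this $D$ one has $v_s(D_s)=v_\eta(D_\eta)$: this holds because $f^{-1}_*D=\mu_*G$ misses $z$ and $\zeta$, so both sides reduce to $\sum_i \alpha_i\,\mathrm{ord}_{E_i}(D)$. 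Only after that can you apply the hypothesis of the lemma to $D$ and obtain a $\Gamma\geq\epsilon D$ with $\tau\subset\mathrm{LCP}(X_\eta,\Delta_\eta+\Gamma_\eta)$, and it is precisely $g^{-1}_*\Gamma\geq\epsilon G$ that makes $\Gamma+\pi^*H$ a special complement with respect to $(W,F)\to(X,\Delta)$. As written, your proof has an acknowledged but unfilled gap at its central step.
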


\begin{proof}
    First note that for $D = 0$ we have 
    \begin{equation*}
        v_s(I_{s, \bullet}) = A_{X_s, \Delta_s}(v_s) \cdot \mathrm{lct}(X_{s}, \Delta_{s}; I_{s, \bullet}) = A_{X_{\eta}, \Delta_{\eta}}(v_{\eta}) \cdot \mathrm{lct}(X_{\eta}, \Delta_{\eta}; I_{\eta, \bullet}) = v_{\eta}(I_{\eta, \bullet}). 
    \end{equation*}
    Assume $D$ is a relative effective $\mathbb{Q}$-Cartier divisor on $X/S$ with $v_{s}(D_s) = v_{\eta}(D_{\eta})$, then 
    \begin{equation*}
        \begin{aligned}
            \mathrm{lct}(X_s, \Delta_s + \epsilon D_s; I_{s, \bullet}) &= \frac{A_{X_s, \Delta_s}(v_s) - \epsilon v_s(D_s)}{v_s(I_{s, \bullet})} \\ 
            &= \frac{A_{X_{\eta}, \Delta_{\eta}}(v_{\eta}) - \epsilon v_{\eta}(D_{\eta})}{v_{\eta}(I_{\eta, \bullet})} = \mathrm{lct}(X_{\eta}, \Delta_{\eta} + \epsilon D_{\eta}; I_{\eta, \bullet}). 
        \end{aligned}
    \end{equation*}
    Thus we can apply Lemma \ref{specialize_val_computes_constant_lct} to the family $\pi \colon (X, \Delta + \epsilon D) \to S$, the ideal sequence $I_{\bullet}$, and $v_{\eta}$, so we get a family of models $f^D \colon (Y^D, E^D) \to (X, \Delta + \epsilon D)$, via which $v_{\eta}$ degenerates to some $v^D_s \in \mathrm{Val}_{X_s,x_s}^{\mathrm{qm}}$ as in \ref{quasi-monomial_deg_val} (note that $v_{\eta} \in \mathrm{QM}(Y^D_{\eta}, E^D_{\eta})$ by Remark \ref{get_qdlt_model_over_generic_point}). Then 
    \begin{equation*}
        \mathrm{lct}(X_{\eta}, \Delta_{\eta} + \epsilon D_{\eta}; I_{\eta, \bullet}) = \frac{A_{X_{\eta}, \Delta_{\eta} + \epsilon D_{\eta}}(v_{\eta})}{v_{\eta}(I_{\eta, \bullet})} \geq \frac{A_{X_{s}, \Delta_{s} + \epsilon D_{s}}(v_{s}^D)}{v_{s}^D(I_{s, \bullet})}
    \end{equation*}
    by Lemma \ref{compare_qm_deg_val}. This implies that $v_s^D$ computes $\mathrm{lct}(X_s, \Delta_s + \epsilon D_s; I_{s, \bullet})$. By the uniqueness assumption, we conclude that $v_s^D = v_s$ since $A_{X_{s}, \Delta_{s} + \epsilon D_{s}}(v_{s}^D) = A_{X_{\eta}, \Delta_{\eta} + \epsilon D_{\eta}}(v_{\eta}) = A_{X_{s}, \Delta_{s} + \epsilon D_{s}}(v_{s})$. Moreover, Lemma \ref{specialize_val_computes_constant_lct} gives a relative effective $\mathbb{Q}$-Cartier $\mathbb{Q}$-divisor $\Gamma \geq \epsilon D$ on $X/S$ such that $\pi \colon (X, \Delta + \Gamma) \to S$ is locally stable, $K_X + \Delta + \Gamma \sim_{S, \mathbb{Q}} 0$, and $v_{\eta} \in \mathrm{LCP}(X_{\eta}, \Delta_{\eta} + \Gamma_{\eta})$. By Lemma \ref{compare_qm_deg_val}, $v_s \in \mathrm{LCP}(X_s, \Delta_s + \Gamma_s)$. 

    By Remark \ref{get_qdlt_model_over_generic_point}, since $v_{\eta}$ is a special, we can choose a family of models $f \colon (Y, E = \sum_{i=1}^{r} E_i) \to (X, \Delta)$ such that generic fiber is of qdlt Fano type, each $E_i$ is $\mathbb{Q}$-Cartier, and there exists an $f$-ample divisor supported on $E$. Thus $Z = \bigcap_{i=1}^{r} E_i$ is irreducible, with the generic point $\zeta$ such that $v_{\eta} \in \mathrm{QM}_{\zeta}(Y_{\eta}, E_{\eta})$. Let $z$ be a generic point of $Z_s$, then $(Y, E + f^{*}\pi^{*}H)$ is simple-toroidal at $z$, and as in the last paragraph, we have $v_s \in \mathrm{QM}_z(Y_s, E_s)$ corresponding to $v_{\eta}$ and $\alpha = (\alpha_i) \in \mathbb{R}_{\geq 0}^r$ under 
    \begin{equation*}
        \mathrm{QM}_{\zeta}(Y_{\eta}, E_{\eta}) \simeq \mathbb{R}_{\geq 0}^r \simeq \mathrm{QM}_z(Y_s, E_s). 
    \end{equation*}
    In fact, by the uniqueness assumption, $z$ is the unique generic point of $Z_s$. Let 
    \begin{equation*}
        \mu \colon (W, F) \to (Y, f^{-1}_{*}\Delta + E + f^{*}\pi^{*}H)
    \end{equation*}
    be a log resolution that is a local isomorphism over $z$ and the generic point of $Y_s$, such that there is a $\mu$-ample divisor supported on $\mathrm{Ex}(\mu)$. Hence there is a $g$-ample divisor $-A$ with $\mathrm{Supp}(A) = \mathrm{Ex}(g)$ and $A \geq 0$, where $g = f \circ \mu$. Since $X$ is affine, $-A$ is ample. Let $G \in |{-A}|_{\mathbb{Q}}$ be a general $\mathbb{Q}$-divisor whose support does not contain any stratum of $(W, F)$, and let $D = g_{*}G = g_{*}(G + A)$. Then $D$ is $\mathbb{Q}$-Cartier, and $\mathrm{Supp}(D)$ does not contain $X_s$. Thus $D$ is flat over $S$. Write
    \begin{equation*}
        f^{*}D = f^{-1}_{*}D + \sum_{i=1}^{r} \mathrm{ord}_{E_i}(D) E_i
    \end{equation*}
    where the support of $f^{-1}_{*}D = \mu_{*}G$ does not contain $z$. Hence $v_s(D_s) = \sum_{i=1}^{r} \alpha_i \mathop{\mathrm{ord}_{E_i}}(D) = v_{\eta}(D_{\eta})$. By last paragraph, there exists a relative effective $\mathbb{Q}$-Cartier $\mathbb{Q}$-divisor $\Gamma \geq \epsilon D$ on $X/S$, for some $\epsilon > 0$, such that $\pi \colon (X, \Delta + \Gamma) \to S$ is locally stable, $K_X + \Delta + \Gamma \sim_{S, \mathbb{Q}} 0$, and $v_{\eta} \in \mathrm{LCP}(X_{\eta}, \Delta_{\eta} + \Gamma_{\eta})$. 

    Consider the diagram 
    \begin{equation*}
        \begin{tikzcd}
            \mathrm{QM}_{\zeta}(Y_{\eta}, E_{\eta}) \ar[r, "i", hook] \ar[d, "\simeq"] & \mathrm{QM}_{z}(Y, E + f^{*}\pi^{*}H) \ar[r, "p"] \ar[d, "\simeq"] & \mathrm{QM}_{z}(Y_s, E_s) \ar[d, "\simeq"] \\ 
            \mathbb{R}_{\geq 0}^{r} \ar[r, "{\alpha \mapsto (\alpha, 0)}"] & \mathbb{R}_{\geq 0}^{r} \times \mathbb{R}_{\geq 0}^{d} \ar[r, "{(\alpha, \beta) \mapsto \alpha}"] & \mathbb{R}_{\geq 0}^{r}
        \end{tikzcd}
    \end{equation*}
    Suppose $\alpha = (\alpha_i) \in \mathbb{R}_{\geq 0}^r$ corresponds to $v_{\eta}^{\alpha} \in \mathrm{QM}_{\zeta}(Y_{\eta}, E_{\eta}) \cap \mathrm{LCP}(X_{\eta}, \Delta_{\eta} + \Gamma_{\eta})$, then 
    \begin{equation*}
        v_s^{\alpha} \in \mathrm{QM}_{z}(Y_s, E_s) \cap \mathrm{LCP}(X_s, \Delta_s + \Gamma_s)
    \end{equation*}
    and $v_s^{\alpha}(\Gamma_s) = v_{\eta}^{\alpha}(\Gamma_{\eta})$ by Lemma \ref{compare_qm_deg_val}. For every $\beta = (\beta_j) \in \mathbb{R}_{\geq 0}^d$ and $v^{\alpha,\beta} \in \mathrm{QM}_{z}(Y, E + f^{*}\pi^{*}H)$, 
    \begin{equation*}
        A_{X, \Delta + \pi^{*}H}(v^{\alpha, \beta}) = \sum_{i=1}^{r} \alpha_i A_{X,\Delta + \pi^{*}H}(E_i) + \sum_{j=1}^{d} \beta_j A_{X, \Delta + \pi^{*}H}(f^{*}\pi^{*}H_j) = \sum_{i=1}^{r} \alpha_i A_{X,\Delta + \pi^{*}H}(E_i)
    \end{equation*}
    since $A_{X, \Delta + \pi^{*}H}(f^{*}\pi^{*}H_j) = 0$. Thus $A_{X_{\eta}, \Delta_{\eta}}(v^{\alpha}_{\eta}) = A_{X, \Delta + \pi^{*}H}(v^{\alpha,\beta}) = A_{X_s, \Delta_s}(v_s^{\alpha})$. Also, 
    \begin{equation*}
        v_{\eta}^{\alpha}(\Gamma_{\eta}) \leq v^{\alpha,\beta}(\Gamma) \leq v_{s}^{\alpha}(\Gamma_s)
    \end{equation*}
    by Lemma \ref{degenerate_monomial_val}, so all the equality holds. Then we get 
    \begin{equation*}
        A_{X_{\eta}, \Delta_{\eta} + \Gamma_{\eta}}(v^{\alpha}_{\eta}) = A_{X, \Delta + \Gamma + \pi^{*}H}(v^{\alpha,\beta}) = A_{X_s, \Delta_s + \Gamma_s}(v_s^{\alpha}) = 0. 
    \end{equation*}
    Let $\sigma_{\eta} \subset \mathrm{QM}_{\zeta}(Y_{\eta}, E_{\eta}) \cap \mathrm{LCP}(X_{\eta}, \Delta_{\eta} + \Gamma_{\eta})$ be a simplicial cone containing $v_{\eta}$, and let $\sigma \subset \mathbb{R}_{\geq 0}^r$ be the corresponding cone under the canonical isomorphism $\mathrm{QM}_{\zeta}(Y_{\eta}, E_{\eta}) \simeq \mathbb{R}_{\geq 0}^r$. Then $\sigma \times \mathbb{R}_{\geq 0}^{d}$ corresponds to a cone $\Sigma \subset \mathrm{QM}_{z}(Y, E + f^{*}\pi^{*}H)$ under $\mathrm{QM}_{z}(Y, E + f^{*}\pi^{*}H) \simeq \mathbb{R}_{\geq 0}^r \times \mathbb{R}_{\geq 0}^d$ such that  
    \begin{equation*}
        \Sigma \subset \mathrm{QM}_{z}(Y, E + f^{*}\pi^{*}H) \cap \mathrm{LCP}(X, \Delta + \Gamma + \pi^{*}H). 
    \end{equation*}
    Now $\Gamma + \pi^{*}H$ is a special $\mathbb{Q}$-complement with respect to $g \colon (W, F) \to (X, \Delta)$, since $g^{-1}_{*}(\Gamma + \pi^{*}H) \geq \epsilon G$. Thus, by Lemma \ref{special_cone_give_qdlt_Fano_type_model} and Lemma \ref{ample_model_of_qdltFano_is_Kollar}, we get a qdlt anti-canonical model 
    \begin{equation*}
        \bar{f} \colon (\overline{Y}, \overline{E} + \overline{H}) \to (X, \Delta)
    \end{equation*}
    with $\mathrm{QM}(\overline{Y}, \overline{E} + \overline{H}) = \Sigma$. Thus all components of $\overline{E}$ are centered at $x$, and $\overline{H} = \bar{f}^{-1}_{*}\pi^{*}H = \bar{f}^{*}\pi^{*}H$. So $\bar{f} \colon (\overline{Y}, \overline{E}) \to (X, \Delta)$ is a locally stable family of Koll\'ar models (see the remark after Definition \ref{def:family_of_Kol_model}). It is clear that $v_{\eta} \in \mathrm{QM}(\overline{Y}_{\eta}, \overline{E}_{\eta})$ degenerates to $v_{s} \in \mathrm{QM}(\overline{Y}_{s}, \overline{E}_{s})$. 
\end{proof}

\begin{rem} \label{construct_family_Kollar_model_in_a_fixed_cone}
    In Lemma \ref{construct_family_of_Kol_model}, if we fix a special quasi-monomial simplicial cone $\tau \subset \mathrm{Val}_{X_{\eta}, x_{\eta}}^{\mathrm{qm}}$ such that $v_{\eta} \in \tau$, then we can get a model $f \colon (\overline{Y}, \overline{E}) \to (X, \Delta)$ with $\mathrm{QM}(\overline{Y}_{\eta}, \overline{E}_{\eta}) \subset \tau$. Moreover, when we choose $\sigma$ in the last step, using Diophantine approximation as in the proof of Lemma \ref{specialize_val_computes_constant_lct}, we may assume that 
    \begin{equation*}
        A_{X, \Delta + I_{\lambda}^{c}}(\overline{E}_i) < 1
    \end{equation*}
    for every component $\overline{E}_i$ of $\overline{E}$, for some $c > 0$ and $\lambda > 0$ with $\mathrm{lct}(X, \Delta; I_{\lambda}^c) > 1$. 
\end{rem}

\subsection{Multiple equivariant degenerations}

\begin{defn}
    Let $x \in (X, \Delta)$ be a klt singularity of finite type over a field $k$, with $x \in X(k)$. Suppose
    \begin{equation*}
        f \colon (Y, E = E_1 + \cdots + E_r) \to (X, \Delta)
    \end{equation*}
    is a model of qdlt Fano type at $x$. Suppose $X = \mathop{\mathrm{Spec}}(R)$, and let $\mathcal{R}$ be the extended Rees algebra 
    \begin{equation*}
        \mathcal{R} \coloneqq \bigoplus_{(a_1, \ldots, a_r) \in \mathbb{Z}^r} H^0(Y, \mathscr{O}_Y(a_1E_1 + \cdots + a_rE_r))t_1^{a_1} \cdots t_r^{a_r} \subset R[t_1^{\pm 1}, \ldots, t_r^{\pm 1}]. 
    \end{equation*}
    Then $\mathcal{R}$ is a finitely generated $k$-algebra, and we have $\mathcal{X} = \mathop{\mathrm{Spec}}(\mathcal{R}) \to \mathbb{A}^r_k$ with an isomorphism 
    \begin{equation*}
        \mathcal{X} \times_{\mathbb{A}^r_k} (\mathbb{G}_{\mathrm{m},k})^r \simeq X \times_k (\mathbb{G}_{\mathrm{m},k})^r. 
    \end{equation*}
    Let $\Delta_{\mathcal{X}}$ be the closure of $\Delta \times_{k} (\mathbb{G}_{\mathrm{m},k})^r$. The morphism $(\mathcal{X}, \Delta_{\mathcal{X}}) \to \mathbb{A}^r_k$ is called the \emph{multiple degeneration} of $(X, \Delta)$ induced by $f \colon (Y, E) \to (X, \Delta)$. Moreover, there is an action of $(\mathbb{G}_{\mathrm{m},k})^r$ on $(\mathcal{X}, \Delta_{\mathcal{X}})$ such that the multiple degeneration is equivariant, and the point $x \in X(k)$ induces a section $x \in \mathcal{X}(\mathbb{A}^r_k)$. 

    By \cite[\S 4]{XZ_stable_deg}, $x \in (\mathcal{X}, \Delta_{\mathcal{X}}) \to \mathbb{A}_k^r$ is a locally stable family of klt singularities, and 
    \begin{equation*}
        \mathcal{R}/(t_1, \ldots, t_r) \simeq \mathrm{gr}_v(R)
    \end{equation*}
    for every $v \in \mathrm{QM}^{\circ}(Y, E)$, which induces $(X_v, \Delta_v) \simeq (\mathcal{X}, \Delta_{\mathcal{X}}) \times_{\mathbb{A}^r_k} \{0\}$. 
\end{defn}

\begin{lem} \label{muti_degen_family}
    Let $S$ be a regular connected scheme. Let $\pi \colon (X, \Delta) \to S$ with $x \in X(S)$ be a locally stable family of klt singularities, and $f \colon (Y, E = \sum_{i=1}^{r} E_i) \to (X, \Delta)$ be a locally stable family of Koll\'ar models at $x$. For each $1 \leq i \leq r$, let $(\mathcal{X}^{(i)}, \Delta^{(i)}) \to \mathbb{A}^i_S$ be the multiple degeneration induced by $E_1 + \cdots + E_i$, that is, 
    \begin{equation*}
        \mathcal{R}^{(i)} \coloneqq \bigoplus_{(a_1, \ldots, a_i) \in \mathbb{Z}^i} \pi_{*}f_{*}\mathscr{O}_Y(a_1E_1 + \cdots + a_iE_i)t_1^{a_1} \cdots t_i^{a_i} \subset \pi_{*}\mathscr{O}_X[t_1^{\pm 1}, \ldots, t_i^{\pm 1}], 
    \end{equation*}
    and $\mathcal{X}^{(i)} = \mathop{\mathrm{Spec}_{S}} \mathcal{R}^{(i)} \to \mathbb{A}^i_S$. Then the following hold: 
    \begin{enumerate}[label=\emph{(\arabic*)}, nosep]
        \item $(\mathcal{X}^{(i)}, \Delta^{(i)}) \to \mathbb{A}^i_S$ is a locally stable family of klt pairs. 
        \item There exists a locally stable family of Koll\'ar models $f^{(i)} \colon (\mathcal{Y}^{(i)}, \mathcal{E}^{(i)}) \to (\mathcal{X}^{(i)}, \Delta^{(i)})$ over $\mathbb{A}^i_S$ with 
        \begin{equation*}
            (\mathcal{Y}^{(i)}, \mathcal{E}^{(i)}) \times_{\mathbb{A}^i_S} (\mathbb{G}_{\mathrm{m},S})^i \simeq (Y, E) \times_S (\mathbb{G}_{\mathrm{m},S})^i
        \end{equation*}
        as families of models for $(X, \Delta) \times_S (\mathbb{G}_{\mathrm{m},S})^i$. 
        \item $(\mathcal{X}^{(i+1)}, \Delta^{(i+1)}) \to \mathbb{A}^{i+1}_S$ is the degeneration of $(\mathcal{X}^{(i)}, \Delta^{(i)}) \to \mathbb{A}^i_S$ induced by the component $\mathcal{E}^{(i)}_{i+1}$ corresponding to $E_{i+1}$. 
        \item For every $s \in S$, the base change $(\mathcal{X}^{(i)}_s, \Delta^{(i)}_s) \to \mathbb{A}^i_s$ is the multiple degeneration of $x_s \in (X_s, \Delta_s)$ induced by $f_s \colon (Y_s, E_{1,s} + \cdots + E_{i,s}) \to (X_s, \Delta_s)$. 
    \end{enumerate}
\end{lem}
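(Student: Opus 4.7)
The plan is to induct on $i$, reducing the whole statement to a single-step degeneration lemma. The base case $i = 0$ is trivial with $\mathcal{X}^{(0)} = X$ and $f^{(0)} = f$. For the inductive step, I would build the $(i{+}1)$-th degeneration from $(\mathcal{X}^{(i)}, \Delta^{(i)})$ by taking the extended Rees algebra associated to the single component $\mathcal{E}^{(i)}_{i+1}$ of the locally stable family of Koll\'ar models $\mathcal{E}^{(i)}$. Assertion (3) then becomes a formal identity of graded rings obtained by reordering the summation in the definition of $\mathcal{R}^{(i+1)}$, using the inductive identification $\mathcal{X}^{(i)} \times_{\mathbb{A}^i_S} (\mathbb{G}_{\mathrm{m},S})^i \simeq X \times_S (\mathbb{G}_{\mathrm{m},S})^i$ together with the $(\mathbb{G}_{\mathrm{m}})^{i+1}$-equivariance; and (4) follows because the formation of extended Rees algebras commutes with flat base change along $s \in S$, so that the fiber recovers the absolute multiple degeneration of $x_s \in (X_s, \Delta_s)$ induced by $f_s$.

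The heart of the argument is the following \emph{single-step lemma}: if $\pi \colon x \in (Z, D) \to T$ is a locally stable family of klt singularities over a regular scheme $T$ and $g \colon (W, F = F_0 + F') \to (Z, D)$ is a locally stable family of Koll\'ar models at $x$, then the degeneration of $(Z, D)$ by $\mathrm{ord}_{F_0}$ gives a locally stable family of klt pairs over $\mathbb{A}^1_T$ together with a compatible locally stable family of Koll\'ar models obtained by degenerating $(W, F)$ along $F_0$ as well. Applied iteratively with $T = \mathbb{A}^i_S$ and the inductive Koll\'ar model $f^{(i)}$, this single-step lemma yields (1) and (2).

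To prove the single-step lemma, I would first establish flatness of $\mathcal{R} = \bigoplus_{a \in \mathbb{Z}} \pi_{*} g_{*} \mathscr{O}_W(aF_0) t^a$ over $\mathbb{A}^1_T$: each graded piece is a torsion-free coherent $\pi_{*}\mathscr{O}_Z$-submodule of $(\pi_{*}\mathscr{O}_Z)[t^{\pm 1}]$, hence flat over $T$, while the $\mathbb{G}_\mathrm{m}$-grading supplies flatness in the new $\mathbb{A}^1$-direction. Fiberwise over geometric points of $T$, the component $F_{0,t}$ is a Koll\'ar component at $x_t$, and the absolute degeneration statement \cite[Thm.\ 4.1]{XZ_stable_deg} (encoded in Lemma \ref{special_val_equiv_conditions}(iii)) shows that $\mathrm{gr}_{\mathrm{ord}_{F_{0,t}}}(\mathscr{O}_{Z_t,x_t})$ is finitely generated and defines a klt fiber. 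To upgrade this to local stability over $\mathbb{A}^1_T$, I would invoke inversion of adjunction \cite[Thm.\ 4.54]{Kol_fam} in the form stated after Definition \ref{def:family_of_Kol_model}: it suffices to show that $(\mathcal{Z}, \mathcal{D} + \pi_{\mathcal{Z}}^{*}H + \{t=0\})$ is slc, which holds away from $\{t = 0\}$ by the $\mathbb{G}_\mathrm{m}$-equivariant trivialization with $(Z, D) \times \mathbb{G}_\mathrm{m}$ and holds along $\{t = 0\}$ fiberwise over $T$ by the absolute case. The construction of the degenerated Koll\'ar model proceeds analogously by taking the Rees algebra of $\mathscr{O}_W$ in $F_0$ and extracting all $F_i$ simultaneously; qdlt-ness of the resulting fibers follows again from the absolute case of \cite[\S 4.2]{XZ_stable_deg}, and the anti-ampleness of $-(K + \text{boundary})$ is preserved along the flat degeneration.

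The main obstacle is precisely the promotion from fiberwise to family-wise local stability over the higher-dimensional base $\mathbb{A}^{i+1}_S$: one must verify that $K_{\mathcal{X}^{(i+1)}/\mathbb{A}^{i+1}_S} + \Delta^{(i+1)}$ is $\mathbb{Q}$-Cartier in a way compatible with both the $S$-directions and the new Rees $\mathbb{A}^1$-direction. This is where the \emph{qdlt} (rather than merely plt) structure of the family of Koll\'ar models in (2) is essential, since it provides the toroidal local coordinates along which $\mathbb{Q}$-Cartierness of the components $\mathcal{E}^{(i)}_{j}$ is preserved under degeneration; combined with the inversion of adjunction criterion over the regular base $S \times \mathbb{A}^{i+1}$, this closes the induction.
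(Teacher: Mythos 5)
Your overall strategy---induction on $i$, reducing to a single-step degeneration along one component---matches the paper, which likewise first proves (3) assuming (1) and (2) and thereby reduces to the case $i=1$. However, there is a genuine gap in the core flatness claim. You assert that each graded piece $\pi_*g_*\mathscr{O}_W(aF_0)$ is flat over $T$ because it is a torsion-free coherent submodule of $(\pi_*\mathscr{O}_Z)[t^{\pm 1}]$. This reasoning only works when $T$ is a Dedekind scheme; over a regular base $T=\mathbb{A}^i_S$ of dimension $\geq 2$ (which is what the inductive step actually requires), a torsion-free coherent sheaf need not be flat (e.g.\ the ideal sheaf of a point in $\mathbb{A}^2$). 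Since the whole point of the lemma is to get flatness and local stability over the \emph{higher-dimensional} base $\mathbb{A}^i_S$, this step cannot be dispatched as you describe.

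The paper circumvents this with an idea your proposal does not contain: it chooses a relative $\mathbb{Q}$-complement $\Gamma$ on $X/S$ with $K_X + \Delta + \Gamma \sim_{S,\mathbb{Q}} 0$ and $\mathrm{LCP}(X_s,\Delta_s+\Gamma_s)=\mathrm{QM}(Y_s,E_s)$ for all $s$. With this complement in hand, $E_1$ is a locally stable family of Koll\'ar \emph{components} for $(X,\Delta+(1-\epsilon)\Gamma)$, and prior results on Rees degenerations along families of Koll\'ar components (\cite[Lem.\ 3.13(4)]{XZ_stable_deg}, \cite[Lem.\ 3.3]{Xu_towards}) supply simultaneously the flatness of $\mathcal{R}^{(1)}$, the identification of fibers with the absolute degenerations, and local stability over $\mathbb{A}^1_S$. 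Your proposed route of ``inversion of adjunction plus fiberwise absolute facts'' correctly identifies that one must verify slc-ness of a boundary over a regular base, but you never produce the complement $\Gamma$ that is needed to make the degenerated family log Calabi--Yau over its base, and without it the claim that the $\mathbb{Q}$-Cartier condition and local stability descend to the family over $\mathbb{A}^{i+1}_S$ is unsupported. Likewise your construction of $\mathcal{Y}^{(1)}$ ``by taking the Rees algebra of $\mathscr{O}_W$ in $F_0$'' is not how the paper builds it: the paper extracts the divisors $E_j\times_S\mathbb{G}_{\mathrm{m},S}$ over $\mathcal{X}^{(1)}$ via Lemma~\ref{extract_div}, passes to the ample model, and then uses the complement $\Gamma^{(1)}$ and inversion of adjunction to show the resulting family is locally stable, identifying fibers with the known absolute Koll\'ar models by uniqueness of ample models. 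These are the steps your outline elides and that are needed to make the induction actually close.
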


\begin{proof}
    The case when $S$ is a point is proved in \cite[\S 4.2]{XZ_stable_deg}. In general, we may assume that $S$ is affine. If we have (1) and (2), then (3) holds by \cite[Prop.\ 3.6]{Xu_towards}. We prove (1) and (2) by induction on $i$. By (3), it suffices to prove the case $i = 1$. By shrinking $S$, we may assume that there is a relative effective $\mathbb{Q}$-Cartier $\mathbb{Q}$-divisor $\Gamma$ for $X/S$ such that $\pi \colon (X, \Delta + \Gamma) \to S$ is locally stable, $K_X + \Delta + \Gamma \sim_{S, \mathbb{Q}} 0$, and 
    \begin{equation*}
        \mathrm{LCP}(X_s, \Delta_s + \Gamma_s) = \mathrm{QM}(Y_s, E_s)
    \end{equation*}
    for every $s \in S$. Now 
    \begin{equation*}
        \mathcal{X}^{(1)} = \mathop{\mathrm{Spec}_S}\left( \bigoplus_{m \in \mathbb{Z}} \pi_{*} f_{*}\mathscr{O}_Y(mE_1)t^m \right) \to \mathbb{A}^1_S = \mathop{\mathrm{Spec}_S}(\mathscr{O}_S[t]). 
    \end{equation*}
    By \cite[Lem.\ 3.13(4)]{XZ_stable_deg}, $E_1$ gives a locally stable family of Koll\'ar components for $(X, \Delta + (1-\epsilon)\Gamma)$ at $x$ for all $0 < \epsilon \leq 1$, and the fiber $\mathcal{X}^{(1)}_s \to \mathbb{A}^1_s$ is the degeneration of $X_s$ induced by $E_{1,s}$ for every $s \in S$. Hence by \cite[Lem.\ 3.3]{Xu_towards} and the case over a point, $(\mathcal{X}^{(1)}, \Delta^{(1)} + (1-\epsilon)\Gamma^{(1)}) \to \mathbb{A}^1_S$ is a locally stable family with klt fibers. It follows that $(\mathcal{X}^{(1)}, \Delta^{(1)} + \Gamma^{(1)}) \to \mathbb{A}^1_S$ is a locally stable family, and (4) holds. 
    
    It remains to prove (2) when $i = 1$. Since $(\mathcal{X}^{(1)}, \Delta^{(1)} + \Gamma^{(1)}) \times_{\mathbb{A}^1_S} \mathbb{G}_{\mathrm{m},S} \simeq (X, \Delta + \Gamma) \times_S \mathbb{G}_{\mathrm{m},S}$, we can extract the divisors $E_i \times_S \mathbb{G}_{\mathrm{m},S}$ to get a model
    \begin{equation*}
        f^{(1)} \colon (\mathcal{Y}^{(1)}, \mathcal{E}^{(1)}) \to (\mathcal{X}^{(1)}, \Delta^{(1)})
    \end{equation*}
    by Lemma \ref{extract_div}. Passing to the ample model over $\mathcal{X}^{(1)}$, we may assume $-(K_{\mathcal{Y}^{(1)}} + (f^{(1)})^{-1}_{*}\Delta^{(1)} + \mathcal{E}^{(1)})$ is ample, and there is an isomorphism 
    \begin{equation*}
        (\mathcal{Y}^{(1)}, \mathcal{E}^{(1)}) \times_{\mathbb{A}^1_S} \mathbb{G}_{\mathrm{m},S} \simeq (Y, E) \times_S \mathbb{G}_{\mathrm{m},S}. 
    \end{equation*}
    Hence $\mathcal{E}^{(1)}$ is the closure of $E \times_S \mathbb{G}_{\mathrm{m},S}$, and
    \begin{equation*}
        K_{\mathcal{Y}^{(1)}} + (f^{(1)})^{-1}_{*}(\Delta^{(1)} + \Gamma^{(1)}) + \mathcal{E}^{(1)} = (f^{(1)})^{*}(K_{\mathcal{X}^{(1)}} + \Delta^{(1)} + \Gamma^{(1)}). 
    \end{equation*}
    Suppose $t_0, \ldots, t_d$ is a regular system of parameters at a closed point of $\mathbb{A}^1_S$, and 
    \begin{equation*}
        H = \mathop{\mathrm{div}}(t_0) + \cdots + \mathop{\mathrm{div}}(t_d) \subset \mathcal{X}^{(1)}. 
    \end{equation*}
    Then $(\mathcal{X}^{(1)}, \Delta^{(1)} + \Gamma^{(1)} + H)$ is lc. Hence $(\mathcal{Y}^{(1)}, \mathcal{E}^{(1)} + (f^{(1)})^{-1}_{*}(\Delta^{(1)} + \Gamma^{(1)}) + (f^{(1)})^{*}H)$ is lc, that is, 
    \begin{equation*}
        (\mathcal{Y}^{(1)}, \mathcal{E}^{(1)} + (f^{(1)})^{-1}_{*}(\Delta^{(1)} + \Gamma^{(1)})) \to \mathbb{A}^1_S
    \end{equation*}
    is locally stable. Hence, for every $s \in S$, the fiber $f_s^{(1)} \colon (\mathcal{Y}^{(1)}_s, \mathcal{E}^{(1)}_s) \to (\mathcal{X}^{(1)}_s, \Delta^{(1)}_s)$ is a family of models which extends $(Y_s, E_s) \times_{\kappa(s)} \mathbb{G}_{\mathrm{m},\kappa(s)} \to (X_s, \Delta_s) \times_{\kappa(s)} \mathbb{G}_{\mathrm{m},\kappa(s)}$ such that $-(K_{\mathcal{Y}^{(1)}_s} + (f_s^{(1)})^{-1}_{*}\Delta_s^{(1)} + \mathcal{E}_s^{(1)})$ is ample. By the case over a point, $(Y_s, E_s) \times_{\kappa(s)} \mathbb{G}_{\mathrm{m},\kappa(s)} \to (X_s, \Delta_s) \times_{\kappa(s)} \mathbb{G}_{\mathrm{m},\kappa(s)}$ can also be extended to a locally stable family of Koll\'ar models, which must coincide with $f_s^{(1)} \colon (\mathcal{Y}^{(1)}_s, \mathcal{E}^{(1)}_s) \to (\mathcal{X}^{(1)}_s, \Delta^{(1)}_s)$. So every fiber of $(\mathcal{Y}^{(1)}, \mathcal{E}^{(1)} + (f^{(1)})^{-1}_{*}\Delta^{(1)}) \to \mathbb{A}^1_S$ is qdlt, that is, 
    \begin{equation*}
        f^{(1)} \colon (\mathcal{Y}^{(1)}, \mathcal{E}^{(1)}) \to (\mathcal{X}^{(1)}, \Delta^{(1)})
    \end{equation*}
    is a locally stable family of Koll\'ar models. 
\end{proof}

\begin{lem} \label{graded_ring_of_val_on_family_of_Kol_model}
    Let $S$ be a regular connected scheme. Let $\pi \colon (X, \Delta) \to S$ with $x \in X(S)$ be a locally stable family of klt singularities, and $f \colon (Y, E = \sum_{i=1}^{r} E_i) \to (X, \Delta)$ be a locally stable family of Koll\'ar models at $x$. Suppose $\alpha = (\alpha_i) \in \mathbb{R}_{> 0}^{r}$, and $v_s^{\alpha} \in \mathrm{QM}^{\circ}(Y_s, E_s)$ is the corresponding quasi-monomial valuation on $x_s \in (X_s, \Delta_s)$ for every $s \in S$. For each $\lambda \geq 0$, let 
    \begin{equation*}
        \mathfrak{a}_{\lambda} \coloneqq \sum_{\langle \alpha, m \rangle \geq \lambda} f_{*}\mathscr{O}_Y\left( - \sum_{i=1}^{r} m_iE_i \right) \subset \mathscr{O}_X, 
    \end{equation*}
    where the sum ranges over all $m = (m_i) \in \mathbb{N}^r$ such that $\sum_{i=1}^{r} \alpha_i m_i \geq \lambda$. Then the following hold: 
    \begin{enumerate}[label=\emph{(\arabic*)}, nosep]
        \item $\mathfrak{a}_{\bullet} = \{\mathfrak{a}_{\lambda}\}_{\lambda}$ is an ideal sequence on $X$ centered at $x$, and $\mathfrak{a}_{\lambda}/\mathfrak{a}_{>\lambda}$ is flat over $S$ for all $\lambda \geq 0$. 
        \item Let $X_{\alpha} = \mathop{\mathrm{Spec}_S}(\pi_{*}\mathrm{gr}_{\mathfrak{a}}(\mathscr{O}_X))$, where 
        \begin{equation*}
            \mathrm{gr}_{\mathfrak{a}}(\mathscr{O}_X) \coloneqq \bigoplus_{\lambda \geq 0} \mathfrak{a}_{\lambda}/\mathfrak{a}_{>\lambda}, 
        \end{equation*}
        then the canonical morphism $\pi_{\alpha} \colon X_{\alpha} \to S$ is flat, of finite type, with normal and geometrically integral fibers. There is a canonical action of $\mathbb{T} = (\mathbb{G}_{\mathrm{m},S})^r$ on $X_0$ preserving the grading, whose fixed locus is the image of a section $x_{\alpha} \colon S \to X_{\alpha}$. 
        \item For every $s \in S$, the restriction $\mathfrak{a}_{s, \bullet} = \{\mathfrak{a}_{\lambda}\mathscr{O}_{X_s}\}_{\lambda}$ is the ideal sequence associated with $v_s^{\alpha}$. 
        \item There exists a $\mathbb{T}$-invariant effective $\mathbb{Q}$-divisor $\Delta_{\alpha}$ on $X_{\alpha}$ such that $\pi_{\alpha} \colon (X_{\alpha}, \Delta_{\alpha}) \to S$ is a locally stable family of klt pairs, and $(X_{\alpha,s}, \Delta_{\alpha,s})$ is the degeneration of $x_s \in (X_s, \Delta_s)$ induced by $v_s^{\alpha}$ for every $s \in S$. 
    \end{enumerate}
\end{lem}

\begin{proof}
    (1). It is clear that $\mathfrak{a}_{\bullet} = \{\mathfrak{a}_{\lambda}\}_{\lambda}$ is an ideal sequence on $X$. Write 
    \begin{equation*}
        I_{m} = f_{*}\mathscr{O}_Y\left( - \sum_{i=1}^{r} m_iE_i \right) \subset \mathscr{O}_X
    \end{equation*}
    for $m \in \mathbb{Z}^r$, and $I_{>m} \coloneqq \sum_{m' > m} I_{m'}$, where $m' = (m'_i) > (m_i) = m$ if and only if $m'_i \geq m_i$ for all $i$ and $m' \neq m$. By Lemma \ref{muti_degen_family}, 
    \begin{equation*}
        \mathcal{R} \coloneqq \bigoplus_{m \in \mathbb{Z}^r} I_mt_1^{-m_1} \cdots t_r^{-m_r}
    \end{equation*}
    is flat over $\mathbb{A}_S^r = \mathop{\mathrm{Spec}}_S(\mathscr{O}_S[t_1, \ldots, t_r])$. Hence 
    \begin{equation*}
        \mathcal{R}/(t_1, \ldots, t_r) \simeq \bigoplus_{m \in \mathbb{Z}^r} I_m/I_{>m} 
    \end{equation*}
    is flat over $S$. By Lemma \ref{flat_Rees}, we have
    \begin{equation*}
        \mathfrak{a}_{\lambda}/\mathfrak{a}_{>\lambda} \simeq \bigoplus_{\langle \alpha, m \rangle = \lambda} I_m/I_{>m}, 
    \end{equation*}
    so $\mathfrak{a}_{\lambda}/\mathfrak{a}_{>\lambda}$ is also flat over $S$. 

    (2). By the computation above, $X_{\alpha}$ is isomorphic to $\mathcal{X} \times_{\mathbb{A}^r_S} 0_S$ over $S$, where $0_S \subset \mathbb{A}_S^r$ is the zero section, and $(\mathcal{X}, \Delta_{\mathcal{X}}) \to \mathbb{A}^r_S$ is the multiple degeneration induced by $f \colon (Y, E) \to (X, \Delta)$ in Lemma \ref{muti_degen_family}. Hence $\pi_{\alpha} \colon X_{\alpha} \to S$ is flat, of finite type, with normal and geometrically integral fibers. Since $\mathcal{X} \to \mathbb{A}^r_S$ is $\mathbb{T}$-equivariant, we get an action of $\mathbb{T}$ on $X_{\alpha}$, and 
    \begin{equation*}
        {\pi_{\alpha}}_{*}\mathscr{O}_{X_{\alpha}} \simeq \bigoplus_{m \in \mathbb{Z}^r} \pi_{*}(I_m/I_{>m})
    \end{equation*}
    is the eigenspace decomposition. In particular, the $\mathbb{T}$-action preserves each $\mathfrak{a}_{\lambda}/\mathfrak{a}_{>\lambda}$. Since 
    \begin{equation*}
        \pi_{*}(I_0/I_{>0}) = \pi_{*}(\mathscr{O}_{X}/I_{>0}) \simeq \mathscr{O}_{S}, 
    \end{equation*}
    the fixed locus is given by a section $x_{\alpha} \colon S \to X_{\alpha}$. 

    (3) For $s \in S$, we construct $I_{s,m} \subset \mathscr{O}_{X_s}$ and $\mathcal{R}_s \coloneqq \bigoplus_{m \in \mathbb{Z}^r} I_{s,m}t_1^{-m_1} \cdots t_r^{-m_r}$ as above from the Koll\'ar model $f_s \colon (Y_s, E_s) \to (X_s, \Delta_s)$. Then $\mathcal{R}_s = \mathcal{R} \otimes_{\mathscr{O}_S} \kappa(s)$ as subrings of $\mathscr{O}_{X_s}[t_1^{\pm 1}, \ldots, t_r^{\pm 1}]$, by Lemma \ref{muti_degen_family}. So $I_{s,m} = I_{m}\mathscr{O}_{X_s} = I_{m} \otimes_{\mathscr{O}_S} \kappa(s)$ for all $m \in \mathbb{Z}^r$. By \cite[Cor.\ 4.10]{XZ_stable_deg}, we have  
    \begin{equation*}
        \mathfrak{a}_{\lambda}(v_s^{\alpha}) = \sum_{\langle \alpha, m \rangle \geq \lambda} I_{s,m}. 
    \end{equation*}
    Thus $\mathfrak{a}_{\lambda}(v_s^{\alpha}) = \mathfrak{a}_{\lambda}\mathscr{O}_{X_s}$, and $\mathfrak{a}_{\lambda}(v_s^{\alpha})/\mathfrak{a}_{>\lambda}(v_s^{\alpha}) \simeq (\mathfrak{a}_{\lambda}/\mathfrak{a}_{>\lambda}) \otimes_{\mathscr{O}_S} \kappa(s)$. So we get (3). 

    (4). Recall that $X_{\alpha} \simeq \mathcal{X} \times_{\mathbb{A}^r_S} 0_S$ over $S$. Let $\Delta_{\alpha}$ be the base change of $\Delta_{\mathcal{X}}$, then $\pi_{\alpha} \colon (X_{\alpha}, \Delta_{\alpha}) \to S$ is a locally stable family of klt pairs. As in (3), the fiber $(X_{\alpha, s}, \Delta_{\alpha, s})$ is the central fiber of the multiple degeneration $(\mathcal{X}_s, \Delta_{s}) \to \mathbb{A}_{s}^r$, so it is the degeneration of $x_s \in (X_s, \Delta_s)$ induced by $v_s^{\alpha}$. 
\end{proof}

\begin{rem} \label{initial_ideal}
    Keep the notations in Lemma \ref{graded_ring_of_val_on_family_of_Kol_model} and the proof. We claim that $\Delta_{\alpha}$ is defined by the divisorial part of the initial ideal of $\Delta$. 

    More precisely, write $\Delta = \sum c_DD$, and let $\mathfrak{b} \subset \mathscr{O}_X$ be the ideal of the closed subscheme $D \subset X$ for each component $D$ of $\Delta$. The \emph{initial ideal} of $\mathfrak{b}$ (with respect to $\mathfrak{a}_{\bullet}$) is the ideal 
    \begin{equation*}
        \mathop{\mathrm{in}_{\mathfrak{a}}}(\mathfrak{b}) \coloneqq \bigoplus_{\lambda \geq 0} (\mathfrak{b} \cap \mathfrak{a}_{\lambda})/(\mathfrak{b} \cap \mathfrak{a}_{>\lambda}) \subset \mathrm{gr}_{\mathfrak{a}}(\mathscr{O}_X). 
    \end{equation*}
    Then $\mathop{\mathrm{in}_{\mathfrak{a}}}(\mathfrak{b})$ defines a closed subscheme of $X_{\alpha} = \mathop{\mathrm{Spec}_S}(\pi_{*}\mathrm{gr}_{\mathfrak{a}}(\mathscr{O}_X))$. Let $D_{\alpha} \subset X_{\alpha}$ be its divisorial part (see \cite[{}4.16]{Kol_fam}). We will show that $\Delta_{\alpha} \coloneqq \sum_j c_{D} D_{\alpha}$ is the divisor given by Lemma \ref{graded_ring_of_val_on_family_of_Kol_model}. 

    Recall that $X_{\alpha} = \mathcal{X} \times_{\mathbb{A}_S^r} 0_S$. Let $\Delta_0$ be the pullback of $\Delta_{\mathcal{X}}$ to $X_{\alpha}$, then we need to show $\Delta_0 = \Delta_{\alpha}$. There is an isomorphism 
    \begin{equation*}
        (\mathcal{X}, \Delta_{\mathcal{X}}) \times_{\mathbb{A}_S^r} (\mathbb{G}_{\mathrm{m}, S})^r \simeq (X, \Delta) \times_S (\mathbb{G}_{\mathrm{m}, S})^r, 
    \end{equation*}
    such that $\Delta_{\mathcal{X}}$ is the closure of $\Delta \times_S (\mathbb{G}_{\mathrm{m}, S})^r$. Thus, we have $\Delta_{\mathcal{X}} = \sum c_D \mathcal{D}$, where $\mathcal{D} \subset \mathcal{X}$ is defined by the ideal 
    \begin{equation*}
        \mathcal{R} \cap \mathfrak{b}[t_1^{\pm 1}, \ldots, t_r^{\pm 1}] = \bigoplus_{m \in \mathbb{Z}^r} (I_m \cap \mathfrak{b}) t_1^{-m_1} \cdots t_r^{-m_r} \subset \mathcal{R}. 
    \end{equation*}
    Hence the pullback $D_{0}$ of $\mathcal{D}$ is the divisorial part of the closed subscheme defined by the ideal 
    \begin{equation*}
        \mathop{\mathrm{in}_I}(\mathfrak{b}) = \bigoplus_{m \in \mathbb{Z}^r} (I_m \cap \mathfrak{b})/(I_{>m} \cap \mathfrak{b}) \subset \bigoplus_{m \in \mathbb{Z}^r} I_m/I_{>m}. 
    \end{equation*}
    If $\alpha_1, \ldots \alpha_r$ are linearly independent over $\mathbb{Q}$, then we have $\mathop{\mathrm{in}_I}(\mathfrak{b}) = \mathop{\mathrm{in}_{\mathfrak{a}}}(\mathfrak{b})$ since $\mathfrak{a}_{\lambda}/\mathfrak{a}_{>\lambda} = I_m/I_{>m}$ for the unique $m \in \mathbb{N}^r$ with $\langle \alpha, m \rangle = \lambda$. Hence $D_{0} = D_{\alpha}$, so $\Delta_0 = \Delta_{\alpha}$. 

    In general, suppose the subspace generated by $\alpha_1, \ldots, \alpha_r$ has dimension $q < r$ over $\mathbb{Q}$, and let $\sigma \subset \mathbb{R}^r_{\geq 0}$ be a rational simplicial cone of dimension $q$ with $\alpha \in \sigma$, spanned by $\mu_1, \ldots, \mu_q \in \mathbb{N}^r$. Then $\sigma$ induces a morphism $\mathbb{A}_S^q \to \mathbb{A}_S^r$ given by 
    \begin{equation*}
        \mathscr{O}_S[t_1, \ldots, t_r] \to \mathscr{O}_S[u_1, \ldots, u_q], \quad t_i \mapsto u_1^{\mu_{1,i}} \cdots u_q^{\mu_{q,i}}. 
    \end{equation*}
    Let $(\mathcal{X}', \Delta_{\mathcal{X}'}) = (\mathcal{X}, \Delta_{\mathcal{X}}) \times_{\mathbb{A}_S^r} \mathbb{A}_S^q$, so that $X_{\alpha} = \mathcal{X}' \times_{\mathbb{A}^q_S} 0_S$. More explicitly, 
    \begin{equation*}
        \mathcal{X}' = \mathop{\mathrm{Spec}_S} \bigoplus_{n \in \mathbb{Z}^q} J_n u_1^{-n_1} \cdots u_q^{-n_q}, \quad \text{where} \quad J_n = \sum_{\langle \mu_j, m \rangle \geq n_j} I_m. 
    \end{equation*}
    Note that if we write $\alpha = \sum_{j=1}^q \beta_j\mu_j$ and $\beta = (\beta_j) \in \mathbb{R}_{\geq 0}^q$, then 
    \begin{equation*}
        \mathfrak{a}_{\lambda} = \sum_{\langle \alpha, m \rangle \geq \lambda} I_m = \sum_{\langle \beta, n \rangle \geq \lambda} J_n. 
    \end{equation*}
    Then $(\mathcal{X}', \Delta_{\mathcal{X}'}) \to \mathbb{A}^q_S$ is a locally stable family, in particular every component of $\Delta_{\mathcal{X}'}$ dominates $\mathbb{A}^q_S$. So $\Delta_{\mathcal{X}'}$ is closure of $\Delta \times_S (\mathbb{G}_{\mathrm{m}, S})^q$ under 
    \begin{equation*}
        (\mathcal{X}', \Delta_{\mathcal{X}'}) \times_{\mathbb{A}_S^q} (\mathbb{G}_{\mathrm{m}, S})^q \simeq (X, \Delta) \times_S (\mathbb{G}_{\mathrm{m}, S})^q. 
    \end{equation*}
    By our choices, $\beta_1, \ldots, \beta_q$ are linearly independent over $\mathbb{Q}$, hence the pullback $\Delta_0'$ of $\Delta_{\mathcal{X}'}$ to $X_{\alpha}$ coincides with the divisor $\Delta_{\alpha}$ defined by the initial ideals, as in the paragraphs above. Finally, $\Delta_0' = \Delta_0$ since $\Delta_{\mathcal{X}}$ has well-defined pullbacks (see \cite[Thm-Def.\ 4.3]{Kol_fam}). 
\end{rem}

\section{Families with constant local volume} \label{sec: constant_vol}

\subsection{Degeneration of the minimizer over a DVR} In this subsection, let $S = \mathop{\mathrm{Spec}}(A)$, where $A$ is a DVR, with the generic point $\eta \in S$ and the closed point $s \in S$. 

\begin{thm} \label{construct_Kol_model_of_minimizer_over_DVR}
    Let $\pi \colon (X, \Delta) \to S$ with $x \in X(S)$ be a locally stable family of klt singularities with 
    \begin{equation*}
        \widehat{\mathrm{vol}}(x_{\eta}; X_{\eta}, \Delta_{\eta}) = \widehat{\mathrm{vol}}(x_{s}; X_{s}, \Delta_{s}). 
    \end{equation*}
    Suppose $v^{\mathrm{m}}_{\eta} \in \mathrm{Val}_{X_{\eta}, x_{\eta}}$ and $v^{\mathrm{m}}_s \in \mathrm{Val}_{X_s, x_s}$ are minimizers of the normalized volume for $x_{\eta} \in (X_{\eta}, \Delta_{\eta})$ and $x_s \in (X_s, \Delta_s)$, respectively, scaled such that $A_{X_{\eta}, \Delta_{\eta}}(v^{\mathrm{m}}_{\eta}) = A_{X_s, \Delta_s}(v^{\mathrm{m}}_s)$. Then there exists a locally stable family of Koll\'ar models $f \colon (Y, E) \to (X, \Delta)$ at $x$ over $S$ such that 
    \begin{equation*}
        v^{\mathrm{m}}_{\eta} \in \mathrm{QM}(Y_{\eta}, E_{\eta}) \quad \text{and} \quad v^{\mathrm{m}}_{s} \in \mathrm{QM}(Y_{s}, E_{s}), 
    \end{equation*}
    and they are identified under the canonical isomorphism $\mathrm{QM}(Y_{\eta}, E_{\eta}) \simeq \mathrm{QM}(Y_{s}, E_{s})$ in Definition \ref{quasi-monomial_deg_val}. 
\end{thm}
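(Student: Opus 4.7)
The plan is to deduce the theorem from Lemma \ref{construct_family_of_Kol_model} applied to the global ideal sequence on $X$ obtained by extending $\mathfrak{a}_\bullet(v^{\mathrm{m}}_\eta)$ from the generic fiber. Specifically, after scaling both minimizers so that $A_{X_\eta,\Delta_\eta}(v^{\mathrm{m}}_\eta) = A_{X_s,\Delta_s}(v^{\mathrm{m}}_s) = 1$, I set
\begin{equation*}
    I_\lambda := \mathfrak{a}_\lambda(v^{\mathrm{m}}_\eta) \cap \mathscr{O}_X \qquad (\lambda \geq 0),
\end{equation*}
which is an ideal sequence centered at $x$. Since $X$ is flat over the DVR $S$, each quotient $I_\lambda/I_{>\lambda}$ is $A$-torsion free and hence flat over $S$; in particular $I_{\eta,\bullet} = \mathfrak{a}_\bullet(v^{\mathrm{m}}_\eta)$ and the multiplicity is preserved, $\mathrm{e}_{X_s}(I_{s,\bullet}) = \mathrm{e}_{X_\eta}(I_{\eta,\bullet})$.

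Next I would show that the lc threshold of $I_\bullet$ is constant on $S$ and that $v^{\mathrm{m}}_s$ computes it on the closed fiber. On the generic side, Lemma \ref{minimizing_ideal_seq}(1) gives $\mathrm{lct}(X_\eta,\Delta_\eta;I_{\eta,\bullet}) = 1$ and $\widehat{\mathrm{vol}}(x_\eta;X_\eta,\Delta_\eta) = \mathrm{e}_{X_\eta}(I_{\eta,\bullet})$. Combining the lower semi-continuity of lc thresholds, which gives $\mathrm{lct}(X_s,\Delta_s;I_{s,\bullet}) \leq 1$, with the infimum characterization $\mathrm{lct}^n \cdot \mathrm{e} \geq \widehat{\mathrm{vol}}$ from Lemma \ref{minimizing_ideal_seq}(1) and the hypothesis that $\widehat{\mathrm{vol}}$ is constant on $S$, forces $\mathrm{lct}(X_s,\Delta_s;I_{s,\bullet}) = 1$ and shows that $I_{s,\bullet}$ itself computes $\widehat{\mathrm{vol}}(x_s;X_s,\Delta_s)$. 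By Lemma \ref{minimizing_ideal_seq}(3), the saturation $\tilde{I}_{s,\bullet}$ then coincides with $\mathfrak{a}_\bullet(v^{\mathrm{m}}_s)$ up to reindexing, and Lemma \ref{lct_of_saturation} yields that $v^{\mathrm{m}}_s$ computes $\mathrm{lct}(X_s,\Delta_s;I_{s,\bullet})$.

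Finally I would verify the uniqueness/perturbation hypothesis of Lemma \ref{construct_family_of_Kol_model}. Both $v^{\mathrm{m}}_\eta$ and $v^{\mathrm{m}}_s$ are special by the remark after Lemma \ref{special_val_equiv_conditions}, so for any relative effective $\mathbb{Q}$-Cartier divisor $D$ on $X/S$ with $v^{\mathrm{m}}_s(D_s) = v^{\mathrm{m}}_\eta(D_\eta)$, condition (v) of Lemma \ref{special_val_equiv_conditions} supplies a common $\epsilon > 0$ such that both fibers $(X_t, \Delta_t + \epsilon D_t)$ for $t \in \{\eta,s\}$ remain klt and the respective minimizer computes the lc threshold of its own associated ideal sequence after perturbation; Lemma \ref{lct_of_saturation} transfers these statements to $I_{\eta,\bullet}$ and $I_{s,\bullet}$, and Lemma \ref{special_val_computes_lct_of_itself_uniquely} provides the required uniqueness up to scaling among all valuations. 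Invoking Lemma \ref{construct_family_of_Kol_model} then produces the desired locally stable family of Kollár models $f \colon (Y,E) \to (X,\Delta)$ with $v^{\mathrm{m}}_\eta$ degenerating to $v^{\mathrm{m}}_s$, which under Definition \ref{quasi-monomial_deg_val} is exactly the stated identification under $\mathrm{QM}(Y_\eta,E_\eta) \simeq \mathrm{QM}(Y_s,E_s)$. The main obstacle, indeed the conceptual heart of the argument, is the link between the intrinsically defined minimizer $v^{\mathrm{m}}_s$ on the closed fiber and the descended data $I_{s,\bullet}$ coming from the generic fiber; this link is precisely what the stronger uniqueness of minimizing ideal sequences in Lemma \ref{minimizing_ideal_seq}(3) provides.
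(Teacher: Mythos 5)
Your proposal is correct and follows essentially the same route as the paper: build $I_\bullet$ by intersecting $\mathfrak{a}_\bullet(v^{\mathrm{m}}_\eta)$ with $\mathscr{O}_X$, use flatness over the DVR and lower semi-continuity of lc thresholds to force $I_{s,\bullet}$ to compute $\widehat{\mathrm{vol}}(x_s;X_s,\Delta_s)$, identify its saturation with $\mathfrak{a}_\bullet(v^{\mathrm{m}}_s)$ via Lemma \ref{minimizing_ideal_seq}(3), and then feed the perturbation/uniqueness package from Lemmas \ref{special_val_equiv_conditions}, \ref{special_val_computes_lct_of_itself_uniquely}, and \ref{lct_of_saturation} into Lemma \ref{construct_family_of_Kol_model}. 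One cosmetic quibble: for the multiplicity preservation $\mathrm{e}_{X_s}(I_{s,\bullet}) = \mathrm{e}_{X_\eta}(I_{\eta,\bullet})$ the paper invokes flatness of $\mathscr{O}_X/I_\lambda$ (not of $I_\lambda/I_{>\lambda}$), though both are torsion-free and hence flat because $I_\lambda = \mathfrak{a}_\lambda(v^{\mathrm{m}}_\eta)\cap\mathscr{O}_X$ is $\varpi$-saturated.
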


\begin{proof}
    Let $\mathfrak{a}_{\bullet}(v^{\mathrm{m}}_{\eta}) \subset \mathscr{O}_{X_{\eta}}$ and $\mathfrak{a}_{\bullet}(v^{\mathrm{m}}_s) \subset \mathscr{O}_{X_s}$ be the ideal sequences associated with $v^{\mathrm{m}}_{\eta}$ and $v^{\mathrm{m}}_s$, respectively. We may assume that $A_{X_{\eta}, \Delta_{\eta}}(v^{\mathrm{m}}_{\eta}) = A_{X_s, \Delta_s}(v^{\mathrm{m}}_s) = 1$, so that 
    \begin{equation*}
        \mathrm{lct}(X_{\eta}, \Delta_{\eta}; \mathfrak{a}_{\bullet}(v^{\mathrm{m}}_{\eta})) = \mathrm{lct}(X_s, \Delta_s; \mathfrak{a}_{\bullet}(v^{\mathrm{m}}_s)) = 1, 
    \end{equation*}
    since they are computed by $v^{\mathrm{m}}_{\eta}$ and $v^{\mathrm{m}}_{s}$, respectively, by Lemma \ref{minimizing_ideal_seq}. Thus 
    \begin{equation*}
        \mathrm{e}_{X_{\eta}}(\mathfrak{a}_{\bullet}(v^{\mathrm{m}}_{\eta})) = \widehat{\mathrm{vol}}(x_{\eta}; X_{\eta}, \Delta_{\eta}) = \widehat{\mathrm{vol}}(x_{s}; X_{s}, \Delta_{s}) = \mathrm{e}_{X_s}(\mathfrak{a}_{\bullet}(v^{\mathrm{m}}_s)). 
    \end{equation*}
    Let $I_{\bullet} = j_{*}\mathfrak{a}_{\bullet}(v^{\mathrm{m}}_{\eta}) \cap \mathscr{O}_X$, where $j \colon X_{\eta} \to X$ is the inclusion, and $\mathscr{O}_X \to j_{*}\mathscr{O}_{X_{\eta}}$ is injective since $X_s$ is a Cartier divisor. It is clear that $\mathscr{O}_X/I_{\lambda}$ is supported on the section $\{x\}$ and torsion-free over $S$ for all $\lambda > 0$. Since $S$ is the spectrum of a DVR, we conclude that $\mathscr{O}_X/I_{\lambda}$ is flat over $S$. Hence 
    \begin{equation*}
        \mathrm{e}_{X_{\eta}}(I_{\eta, \bullet}) = \mathrm{e}_{X_{s}}(I_{s, \bullet}), 
    \end{equation*}
    where $I_{\eta, \bullet} = I_{\bullet}\mathscr{O}_{X_{\eta}} = \mathfrak{a}_{\bullet}(v^{\mathrm{m}}_{\eta})$, and $I_{s, \bullet} = I_{\bullet}\mathscr{O}_{X_s}$. Since $\mathrm{lct}$ is lower semi-continuous in a family, we have 
    \begin{equation*}
        \begin{aligned}
            \widehat{\mathrm{vol}}(x_{s}; X_{s}, \Delta_{s}) &\leq \mathrm{lct}(X_s, \Delta_s; I_{s,\bullet})^n \cdot \mathrm{e}_{X_{s}}(I_{s, \bullet}) \\
            &\leq \mathrm{lct}(X_{\eta}, \Delta_{\eta}; I_{\eta,\bullet})^n \cdot \mathrm{e}_{X_{\eta}}(I_{\eta, \bullet}) = \widehat{\mathrm{vol}}(x_{\eta}; X_{\eta}, \Delta_{\eta}), 
        \end{aligned}
    \end{equation*}
    where $n = \dim_{x_{\eta}}(X_{\eta}) = \dim_{x_s}(X_s)$. But $\widehat{\mathrm{vol}}(x_{\eta}; X_{\eta}, \Delta_{\eta}) = \widehat{\mathrm{vol}}(x_{s}; X_{s}, \Delta_{s})$, so all the inequalities above must be equalities. Thus by the uniqueness up to saturation in Lemma \ref{minimizing_ideal_seq}, we have 
    \begin{equation*}
        I_{s, \bullet} \subset \widetilde{I}_{s, \bullet} = \mathfrak{a}_{c\bullet}(v^{\mathrm{m}}_s)
    \end{equation*}
    for some $c > 0$. In fact, $c = 1$ since $\mathrm{e}_{X_s}(\widetilde{I}_{s, \bullet}) = \mathrm{e}_{X_s}(I_{s, \bullet}) = \mathrm{e}_{X_s}(\mathfrak{a}_{\bullet}(v^{\mathrm{m}}_s))$. 

    Next, we will verify conditions of Lemma \ref{construct_family_of_Kol_model}. Assume $D$ is a relative effective $\mathbb{Q}$-Cartier $\mathbb{Q}$-divisor on $X/S$, such that 
    \begin{equation*}
        v^{\mathrm{m}}_{\eta}(D_{\eta}) = v^{\mathrm{m}}_s(D_s). 
    \end{equation*}
    By Lemma \ref{special_val_equiv_conditions}, there exists $\delta > 0$ such that if $\epsilon < \delta/v^{\mathrm{m}}_{\eta}(D_{\eta}) = \delta/v^{\mathrm{m}}_s(D_s)$, then $v^{\mathrm{m}}_{\eta}$ and $v^{\mathrm{m}}_s$ are special valuations for $x_{\eta} \in (X_{\eta}, \Delta_{\eta} + \epsilon D_{\eta})$ and $x_s \in (X_s, \Delta_s + \epsilon D_s)$, respectively. In fact, we can take $\delta = 1/n$; see Remark \ref{estimate_special_threshold}. 
    
    By Lemma \ref{special_val_computes_lct_of_itself_uniquely}, $v^{\mathrm{m}}_{\eta}$ and $v^{\mathrm{m}}_s$ are the unique, up to scaling, real valuations that compute lc thresholds $\mathrm{lct}(X_{\eta}, \Delta_{\eta} + \epsilon D_{\eta}; \mathfrak{a}_{\bullet}(v^{\mathrm{m}}_{\eta}))$ and $\mathrm{lct}(X_{s}, \Delta_{s} + \epsilon D_{s}; \mathfrak{a}_{\bullet}(v^{\mathrm{m}}_s))$, respectively. Then by Lemma \ref{lct_of_saturation}, $v^{\mathrm{m}}_s$ is also the unique, up to scaling, real valuation that compute $\mathrm{lct}(X_{s}, \Delta_{s} + \epsilon D_{s}; I_{s, \bullet})$. Now the theorem follows from Lemma \ref{construct_family_of_Kol_model}. 
\end{proof}

\subsection{General base schemes}

\begin{cor} \label{find_ideal_over_semi-normal}
    Let $S$ be a semi-normal scheme, and $\pi \colon (X, \Delta) \to S$ with $x \in X(S)$ be a locally stable family of klt singularities, such that
    \begin{equation*}
        s \mapsto \widehat{\mathrm{vol}}(x_{s}; X_{s}, \Delta_{s})
    \end{equation*}
    is a locally constant function on $S$. Suppose $v^{\mathrm{m}}_s \in \mathrm{Val}_{X_s, x_s}$ is a minimizer of the normalized volume for $x_s \in (X_s, \Delta_s)$, scaled such that $A_{X_s, \Delta_s}(v^{\mathrm{m}}_s) = 1$ for all $s \in S$. Then there is an ideal sequence $\mathfrak{a}_{\bullet} \subset \mathscr{O}_X$ cosupported at $x(S) \subset X$ such that the following hold: 
    \begin{enumerate}[label=\emph{(\arabic*)}, nosep]
        \item $\mathfrak{a}_{\lambda}/\mathfrak{a}_{>\lambda}$ is flat over $S$ for all $\lambda \geq 0$. 
        \item For every $s \in S$, $\mathfrak{a}_{s, \bullet} \coloneqq \{ \mathfrak{a}_{\lambda} \mathscr{O}_{X_s} \}_{\lambda}$ is the ideal sequence on $X_s$ associated with $v_s^{\mathrm{m}}$. 
    \end{enumerate}
\end{cor}

\begin{proof}
    First assume that $S = \mathop{\mathrm{Spec}}(A)$, where $A$ is a DVR. By Theorem \ref{construct_Kol_model_of_minimizer_over_DVR} and Lemma \ref{graded_ring_of_val_on_family_of_Kol_model}, we have an ideal sequence $\mathfrak{a}_{\bullet} \subset \mathscr{O}_X$ such that $\mathfrak{a}_{\lambda}/\mathfrak{a}_{>\lambda}$ is flat over $S$ for every $\lambda \geq 0$, $(X_0, \Delta_0) \to S$ is a locally stable family of klt singularities, and $\mathfrak{a}_{s, \bullet}$ is the ideal sequence associated with $v_s^{\mathrm{m}}$ for each $s \in S$. Since $v_s^{\mathrm{m}}$ is a minimizer of the normalized volume function, $x_{0,s} \in (X_{0,s}, \Delta_{0,s})$ is a K-semistable log Fano cone singularity by \cite[Thm.\ 1.1]{LX_stability}. 

    In general, we may assume that $S$ is connected. Let $\mathfrak{a}_{\bullet}(v^{\mathrm{m}}_s) \subset \mathscr{O}_{X_s}$ be the ideal sequence associated with $v^{\mathrm{m}}_s$. Note that if $\kappa(s) \subset K$ is field extension, then $\mathfrak{a}_{\bullet}(v^{\mathrm{m}}_s) \otimes_{\kappa(s)} K$ is the ideal sequence associated with the minimizer of the normalized volume function for $x_K \in (X_K, \Delta_K)$. Suppose $A$ is a DVR, and $g \colon T = \mathop{\mathrm{Spec}}(A) \to S$ is a morphism, then the base change $(X_T, \Delta_T) \to T$ with $x_T \in X_T(T)$ is a locally stable family of klt singularities, so that we have an ideal sequence $\mathfrak{a}_{T, \bullet} \subset \mathscr{O}_{X_T}$, whose fibers are the base changes of the corresponding $\mathfrak{a}_{\bullet}(v^{\mathrm{m}}_s)$. Therefore, the function
    \begin{equation*}
        \ell_{\lambda} \colon s \mapsto \mathrm{length}_{\mathscr{O}_{X_s}}\left( \mathscr{O}_{X_s}/\mathfrak{a}_{\lambda}(v^{\mathrm{m}}_s) \right)
    \end{equation*}
    is constant under specialization, hence constant, for all $\lambda \geq 0$. Consider the Hilbert scheme 
    \begin{equation*}
        h_{\lambda} \colon \mathrm{Hilb}_{X/S}^{\ell_{\lambda}} \to S
    \end{equation*}
    parameterizing closed subschemes of $X$ that are finite flat over $S$ with length $\ell_{\lambda}$. Then $h_{\lambda}$ is separated and of finite type. For each $s \in S$, we have a $\kappa(s)$-point $\sigma_{\lambda}(s) = [\mathscr{O}_{X_s}/\mathfrak{a}_{\lambda}(v^{\mathrm{m}}_s)] \in h_{\lambda}^{-1}(s)$. If $A$ is a DVR, and $g \colon T = \mathop{\mathrm{Spec}}(A) \to S$ is a morphism, then by the argument above, there is a lifting $f \colon T \to \mathrm{Hilb}_{X/S}^{\ell_{\lambda}}$ of $g$ such that $f(t) = \sigma_{\lambda}(g(t))$ for every $t \in T$. Thus, by Lemma \ref{valuative_criterion_section}, the set map $s \mapsto \sigma_{\lambda}(s)$ underlies a morphism of schemes $\sigma_{\lambda} \colon S \to \mathrm{Hilb}_{X/S}^{\ell_{\lambda}}$, which is a section of $h_{\lambda}$. Hence we get an ideal sequence $\mathfrak{a}_{\bullet}$ on $X$ such that $\mathfrak{a}_{s, \lambda} = \mathfrak{a}_{\lambda}(v^{\mathrm{m}}_s)$ for all $s \in S$. Each $\mathfrak{a}_{\lambda}/\mathfrak{a}_{>\lambda}$ is a finitely generated $\mathscr{O}_S$-module with constant rank, hence flat over $S$. 
\end{proof}

\begin{cor} \label{construct_Kollar_model_non-local}
    Let $S$ be a regular connected scheme, and $\pi \colon (X, \Delta) \to S$ with $x \in X(S)$ be a locally stable family of klt singularities such that the function 
    \begin{equation*}
        s \mapsto \widehat{\mathrm{vol}}(x_{s}; X_{s}, \Delta_{s})
    \end{equation*}
    is constant on $S$. Suppose $v^{\mathrm{m}}_s \in \mathrm{Val}_{X_s, x_s}$ is a minimizer of the normalized volume for $x_s \in (X_s, \Delta_s)$, scaled such that $A_{X_s, \Delta_s}(v^{\mathrm{m}}_s) = 1$ for all $s \in S$. Then there exists a locally stable family of Koll\'ar models $f \colon (Y, E = \sum_{i=1}^{r} E_i) \to (X, \Delta)$ at $x$ over $S$ such that $v^{\mathrm{m}}_{s} \in \mathrm{QM}(Y_{s}, E_{s})$, and the coordinate 
    \begin{equation*}
        s \mapsto (v^{\mathrm{m}}_{s}(E_{i,s}))_i \in \mathbb{R}_{\geq 0}^r
    \end{equation*}
    is a constant function on $S$. 
\end{cor}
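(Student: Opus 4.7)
The plan is to combine the preceding results: Corollary \ref{K-semistable_degen_over_semi-normal} produces the ideal sequence of the minimizer in a family, Lemma \ref{construct_family_of_Kol_model} produces the desired family of Koll\'ar models accommodating the minimizers at the endpoints of $S$, and Lemma \ref{graded_ring_of_val_on_family_of_Kol_model} lets me upgrade constancy of coordinates from the endpoints to all of $S$ via an ideal-sequence identification.

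Since $S$ is regular local, hence semi-normal, I invoke Corollary \ref{K-semistable_degen_over_semi-normal} to obtain an ideal sequence $\mathfrak{a}_\bullet \subset \mathscr{O}_X$ centered at $x$ with $\mathfrak{a}_\lambda/\mathfrak{a}_{>\lambda}$ flat over $S$ and $\mathfrak{a}_{t,\bullet} = \mathfrak{a}_\bullet(v^{\mathrm{m}}_t)$ for each $t \in S$. Each $v^{\mathrm{m}}_t$ is special by the Remark after Lemma \ref{special_val_equiv_conditions}, computes the lc threshold of its own ideal sequence (Lemma \ref{minimizing_ideal_seq}), and is normalized by $A_{X_t,\Delta_t}(v^{\mathrm{m}}_t) = v^{\mathrm{m}}_t(\mathfrak{a}_{t,\bullet}) = 1$, so the sequence $I_\bullet := \mathfrak{a}_\bullet$ has constant lc threshold equal to $1$. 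I now verify the hypotheses of Lemma \ref{construct_family_of_Kol_model} for $\pi$, $I_\bullet$, and $v^{\mathrm{m}}_\eta, v^{\mathrm{m}}_s$. Specialness and $A = 1$ are immediate. For the uniqueness-after-perturbation condition, given a relative effective $\mathbb{Q}$-Cartier $\mathbb{Q}$-divisor $D$ on $X/S$ with $v^{\mathrm{m}}_\eta(D_\eta) = v^{\mathrm{m}}_s(D_s)$, Lemma \ref{special_val_equiv_conditions}(iv) supplies $\epsilon > 0$ (depending only on $v^{\mathrm{m}}(D)$) such that $v^{\mathrm{m}}_\eta, v^{\mathrm{m}}_s$ remain special for $(X_\eta, \Delta_\eta + \epsilon D_\eta)$ and $(X_s, \Delta_s + \epsilon D_s)$, and Lemma \ref{special_val_computes_lct_of_itself_uniquely} then identifies them as the unique (up to scaling) quasi-monomial valuations computing the lc thresholds of $I_{\eta,\bullet} = \mathfrak{a}_\bullet(v^{\mathrm{m}}_\eta)$ and $I_{s,\bullet} = \mathfrak{a}_\bullet(v^{\mathrm{m}}_s)$ after the perturbation. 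Lemma \ref{construct_family_of_Kol_model} then delivers a locally stable family of Koll\'ar models $f \colon (Y, E = \sum_{i=1}^r E_i) \to (X, \Delta)$ at $x$ with $v^{\mathrm{m}}_\eta \in \mathrm{QM}(Y_\eta, E_\eta)$, $v^{\mathrm{m}}_s \in \mathrm{QM}(Y_s, E_s)$, and $v^{\mathrm{m}}_\eta$ degenerating to $v^{\mathrm{m}}_s$; setting $\alpha_i := v^{\mathrm{m}}_\eta(E_{i,\eta})$, the degeneration gives $v^{\mathrm{m}}_s(E_{i,s}) = \alpha_i$.

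The main remaining step, and the principal obstacle, is to propagate $v^{\mathrm{m}}_t(E_{i,t}) = \alpha_i$ from $t \in \{\eta, s\}$ to arbitrary $t \in S$. I apply Lemma \ref{graded_ring_of_val_on_family_of_Kol_model} to $(Y, E)$ with the weight $\alpha$, producing a second ideal sequence $\mathfrak{a}^{(\alpha)}_\bullet \subset \mathscr{O}_X$ with flat associated graded, whose fiber $\mathfrak{a}^{(\alpha)}_{t,\bullet}$ is the ideal sequence of the quasi-monomial valuation $v^\alpha_t \in \mathrm{QM}(Y_t, E_t)$ with coordinates $\alpha$. Both $\mathfrak{a}_\bullet$ and $\mathfrak{a}^{(\alpha)}_\bullet$ agree on the generic fiber (each equals $\mathfrak{a}_\bullet(v^{\mathrm{m}}_\eta) = \mathfrak{a}_\bullet(v^\alpha_\eta)$). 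For each $\lambda > 0$, $\mathfrak{a}_\lambda$ is $\mathfrak{p}_x$-primary, so $\mathscr{O}_X/\mathfrak{a}_\lambda$ is finite over $\mathscr{O}_S$ and admits a finite filtration whose successive quotients are the flat graded pieces $\mathfrak{a}_\mu/\mathfrak{a}_{>\mu}$; hence $\mathscr{O}_X/\mathfrak{a}_\lambda$ is flat over $\mathscr{O}_S$, and similarly for $\mathscr{O}_X/\mathfrak{a}^{(\alpha)}_\lambda$. Since $\mathscr{O}_S$ is a domain and $\mathscr{O}_X$ is $\mathscr{O}_S$-flat, both $\mathfrak{a}_\lambda$ and $\mathfrak{a}^{(\alpha)}_\lambda$ coincide with $\mathscr{O}_X \cap (\mathfrak{a}_\lambda(v^{\mathrm{m}}_\eta) \otimes_{\mathscr{O}_S} K(S))$ inside $\mathscr{O}_{X_\eta} = \mathscr{O}_X \otimes_{\mathscr{O}_S} K(S)$, so $\mathfrak{a}_\bullet = \mathfrak{a}^{(\alpha)}_\bullet$. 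Taking fibers at $t$ gives $\mathfrak{a}_\bullet(v^{\mathrm{m}}_t) = \mathfrak{a}_\bullet(v^\alpha_t)$, forcing $v^{\mathrm{m}}_t = v^\alpha_t$ (a valuation is recovered from its ideal sequence) and hence $v^{\mathrm{m}}_t(E_{i,t}) = \alpha_i$ for every $t \in S$.
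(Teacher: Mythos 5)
Your proposal is correct, and its first half (invoking Corollary~\ref{K-semistable_degen_over_semi-normal} to build $\mathfrak{a}_\bullet$, then verifying the hypotheses of Lemma~\ref{construct_family_of_Kol_model} exactly as in Theorem~\ref{construct_Kol_model_of_minimizer_over_DVR}) is precisely what the paper does. Where you differ is in the final step: the paper's two-sentence proof stops once Lemma~\ref{construct_family_of_Kol_model} has produced the family $(Y,E)$ with $v^{\mathrm{m}}_\eta \in \mathrm{QM}(Y_\eta, E_\eta)$ and $v^{\mathrm{m}}_s \in \mathrm{QM}(Y_s, E_s)$ in matching coordinates $\alpha$, leaving implicit the propagation of the conclusion to intermediate points $t \in S$. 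You close that gap explicitly: apply Lemma~\ref{graded_ring_of_val_on_family_of_Kol_model} to $(Y,E)$ with weight $\alpha$ to get a second ideal sequence $\mathfrak{a}^{(\alpha)}_\bullet$, show $\mathscr{O}_X/\mathfrak{a}_\lambda$ and $\mathscr{O}_X/\mathfrak{a}^{(\alpha)}_\lambda$ are both finite flat (hence free) over the local ring $\mathscr{O}_S$, and deduce that both ideals equal the contraction of the common generic fiber, so $\mathfrak{a}_\bullet = \mathfrak{a}^{(\alpha)}_\bullet$ and $v^{\mathrm{m}}_t = v^\alpha_t$ for every $t$. This is a valid argument; the one point worth tightening is the "finite filtration by flat graded pieces" claim, which needs the observation that $\mathfrak{a}_\mu/\mathfrak{a}_{>\mu}$, being $\mathscr{O}_S$-flat and finite over $S$, vanishes exactly when its generic fiber does, so that the jumping set of $\mathfrak{a}_\bullet$ over $S$ in $[0,\lambda]$ agrees with that of $\mathfrak{a}_\bullet(v^{\mathrm{m}}_\eta)$ and is in particular finite. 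A slightly more direct route to the same conclusion, closer in spirit to Lemma~\ref{construct_family_of_Kol_model}, is available: for any $t\in S$ restrict $(Y,E)$ to $S_t = \mathop{\mathrm{Spec}}\mathscr{O}_{S,t}$; by Definition~\ref{quasi-monomial_deg_val}, $v^{\mathrm{m}}_\eta$ has a quasi-monomial degeneration $v^\alpha_t \in \mathrm{QM}(Y_t,E_t)$ with the same coordinates $\alpha$, Lemma~\ref{compare_qm_deg_val} plus the constancy of the lc threshold force $v^\alpha_t$ to compute $\mathrm{lct}(X_t,\Delta_t; \mathfrak{a}_{t,\bullet})$, and Lemma~\ref{special_val_computes_lct_of_itself_uniquely} then gives $v^\alpha_t = v^{\mathrm{m}}_t$. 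Both routes are sound; yours has the advantage of reusing the ideal-sequence machinery already in hand.
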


\begin{proof}
    By Corollary \ref{find_ideal_over_semi-normal}, there is an ideal sequence $\mathfrak{a}_{\bullet}$ on $X$ cosupported on $x(S)$ such that $\mathfrak{a}_{\lambda}/\mathfrak{a}_{>\lambda}$ is flat over $S$ for all $\lambda \geq 0$ and $\mathfrak{a}_{s, \bullet} = \mathfrak{a}_{\bullet}(v^{\mathrm{m}}_s)$ for all $s \in S$. If $S$ is local, then we can apply Lemma \ref{construct_family_of_Kol_model} as in the proof of Theorem \ref{construct_Kol_model_of_minimizer_over_DVR}. 
    
    In general, consider the collection $\mathcal{C}$ of open subsets $U \subset S$ such that there exists such a locally stable family of Koll\'ar models over $U$. We will prove by Noetherian induction that $S \in \mathcal{C}$. 

    Assume that $U \in \mathcal{C}$ is a maximal element, and $f_U \colon (Y_U, E_U) \to (X_U, \Delta_U)$ is a locally stable family of Koll\'ar models satisfying the desired conditions. Suppose $U \neq S$, and $t \in S \smallsetminus U$. Let $T = \mathop{\mathrm{Spec}}(\mathscr{O}_{S,t})$, so that $T$ is regular local, with the closed point $t$ and the generic point $\eta$. Then there is a locally stable family of Koll\'ar models 
    \begin{equation*}
        f_T' \colon (Y'_T, E'_T) \to (X_T, \Delta_T), 
    \end{equation*}
    satisfying the desired conditions, $\mathrm{QM}(Y'_{\eta}, E'_{\eta}) \subset \mathrm{QM}(Y_{\eta}, E_{\eta})$ if $U$ is non-empty, and 
    \begin{equation*}
        A_{X, \Delta + \mathfrak{a}_{\lambda}^c}(v'_i) < 1
    \end{equation*}
    for all the divisorial valuations $v'_i$ on $X$ given by the components $E'_{\eta,i}$ of $E'_{\eta}$, and $\mathrm{lct}(X, \Delta; \mathfrak{a}_{\lambda}^c) > 1$; see Remark \ref{construct_family_Kollar_model_in_a_fixed_cone}. Now by Lemma \ref{extract_div}, we have a model 
    \begin{equation*}
        f' \colon (Y', E') \to (X, \Delta)
    \end{equation*}
    such that $E' = \sum_{i} E'_i$ with $v'_i = \mathrm{ord}_{E'_i}$, and $-(K_{Y'} + f'^{-1}_{*}\Delta + E')$ is $f'$-ample. Then its base change to $T$ coincide with the model $f'_T \colon (Y'_T, E'_T) \to (X_T, \Delta_T)$. 

    We claim that the base change $f'_U \colon (Y'_U, E'_U) \to (X_U, \Delta_U)$ is a locally stable family of Koll\'ar models. It suffices to check this at the localization $W = \mathop{\mathrm{Spec}}(\mathscr{O}_{S,u})$ for all $u \in U$. Let $H_W \subset X_W$ be the pullback of an snc divisor on $W$ defined by a regular system of parameters. Then 
    \begin{equation*}
        \mathrm{QM}(Y_W, E_W + f_W^{*}H_W) \simeq \mathrm{QM}(Y_{\eta}, E_{\eta}) \times \mathbb{R}_{\geq 0}^d, 
    \end{equation*}
    where $d = \dim(W)$, is a special cone on $(X_W, \Delta_W)$, since $f_W \colon (Y_W, E_W) \to (X_W, \Delta_W)$ is a locally stable family of Koll\'ar models; see Remark \ref{local_stability_over_regular_local}. Then the subcone $\mathrm{QM}(Y'_{\eta}, E'_{\eta}) \times \mathbb{R}_{\geq 0}^d$ is also special, giving a model 
    \begin{equation*}
        f'_W \colon (Y'_W, E'_W + f_W'^{*}H_W) \to (X_W, \Delta_W)
    \end{equation*}
    such that $(Y'_W, f_W'^{-1}\Delta_W + E'_W + f_W'^{*}H_W)$ is qdlt and $-(K_{Y'_W} + f_W'^{-1}\Delta_W + E'_W + f_W'^{*}H_W)$ is $f'_W$-ample. So $(Y'_W, E'_W)$ is a locally stable family of Koll\'ar models, and coincide with the base change of $(Y', E')$. 

    Thus, we have a model $f' \colon (Y', E') \to (X, \Delta)$ whose base changes to $T$ and $U$ are locally stable family of Koll\'ar models. Then there is an open subset $V \subset S$ with $T \subset V$ and $U \subset V$ such that the base change 
    \begin{equation*}
        (Y'_{V}, E'_V) \to (X_V, \Delta_V) \to V
    \end{equation*}
    is a locally stable family with qdlt fibers by \cite[Thm.\ 4.42]{Kol_fam}. Since $-(K_{Y'} + f'^{-1}_{*}\Delta + E')$ is $f'$-ample, it is a locally stable family of Koll\'ar models. By construction, $v^{\mathrm{m}}_{\eta} \in \mathrm{QM}(Y'_{\eta}, E'_{\eta})$, then $v^{\mathrm{m}}_{s} \in \mathrm{QM}(Y'_{s}, E'_{s})$ for all $s \in V$, with the same coordinates as $v_{\eta}^{\mathrm{m}}$, by Lemma \ref{compare_qm_deg_val}. Since $t \in V \smallsetminus U$, this is a contradiction to the maximality of $U$. Hence there is a locally stable family of Koll\'ar models over $S$ as desired. 
\end{proof}

\begin{thm} \label{K-semistable_degen_over_semi-normal}
    Let $S$ be a semi-normal scheme, and $\pi \colon (X, \Delta) \to S$ with $x \in X(S)$ be a locally stable family of klt singularities, such that
    \begin{equation*}
        s \mapsto \widehat{\mathrm{vol}}(x_{s}; X_{s}, \Delta_{s})
    \end{equation*}
    is a locally constant function on $S$. Suppose $v^{\mathrm{m}}_s \in \mathrm{Val}_{X_s, x_s}$ is a minimizer of the normalized volume for $x_s \in (X_s, \Delta_s)$, scaled such that $A_{X_s, \Delta_s}(v^{\mathrm{m}}_s) = 1$ for all $s \in S$. Then there is an ideal sequence $\mathfrak{a}_{\bullet} \subset \mathscr{O}_X$ cosupported at $x(S) \subset X$ such that the following hold: 
    \begin{enumerate}[label=\emph{(\arabic*)}, nosep]
        \item $\mathfrak{a}_{\lambda}/\mathfrak{a}_{>\lambda}$ is flat over $S$ for all $\lambda \geq 0$. 
        \item For every $s \in S$, $\mathfrak{a}_{s, \bullet} \coloneqq \{ \mathfrak{a}_{\lambda} \mathscr{O}_{X_s} \}_{\lambda}$ is the ideal sequence on $X_s$ associated with $v_s^{\mathrm{m}}$. 
        \item Let $X_0 = \mathop{\mathrm{Spec}_S}(\pi_{*}\mathrm{gr}_{\mathfrak{a}}(\mathscr{O}_X))$, where 
        \begin{equation*}
            \mathrm{gr}_{\mathfrak{a}}(\mathscr{O}_X) \coloneqq \bigoplus_{\lambda \geq 0} \mathfrak{a}_{\lambda}/\mathfrak{a}_{>\lambda}, 
        \end{equation*}
        then the canonical morphism $\pi_0 \colon X_0 \to S$ is flat, of finite type, with normal and geometrically integral fibers. The grading induces an action of a torus $\mathbb{T} \simeq (\mathbb{G}_{\mathrm{m},S})^r$ on $X_0$, whose fixed locus is the image of a section $x_0 \colon S \to X$. 
        \item There exists a $\mathbb{T}$-invariant effective $\mathbb{Q}$-divisor $\Delta_0$ on $X_0$ such that $\pi_0 \colon (X_0, \Delta_0) \to S$ is a locally stable family of K-semistable log Fano cone singularities, and $(X_{0,s}, \Delta_{0,s})$ is the degeneration of $x_s \in (X_s, \Delta_s)$ induced by $v_s^{\mathrm{m}}$ for every $s \in S$. 
    \end{enumerate}
\end{thm}

\begin{proof}
    (1) and (2) are proved in Corollary \ref{find_ideal_over_semi-normal}. Thus the morphism $\pi_0 \colon X_0 \to S$ is flat, with normal and geometrically integral fibers. We still need to prove $\pi_0$ is of finite type, and construct the divisor $\Delta_0$ on $X_0$ as in (4). We first consider the base changes to regular schemes. 
    
    Suppose $g \colon T \to S$ be a morphism of schemes such that $T$ is regular, and 
    \begin{equation*}
        \pi_T \colon (X_T, \Delta_T) = (X, \Delta) \times_S T \to T
    \end{equation*}
    with $x_T \in X_T(T)$ be the base change. Then there is a locally stable family of Koll\'ar models 
    \begin{equation*}
        f \colon (Y, E) \to (X_T, \Delta_T)
    \end{equation*}
    over $T$ by Corollary \ref{construct_Kollar_model_non-local} such that $v_t^{\mathrm{m}} \in \mathrm{QM}(Y_t, E_t)$ for all $t \in T$, with the same coordinates. The ideal sequences on $X_T$ given by Lemma \ref{graded_ring_of_val_on_family_of_Kol_model}, by Corollary \ref{find_ideal_over_semi-normal}, and by the base change of $\mathfrak{a}_{\bullet}$ are all the same, since they coincide on every fiber; denote it by $\mathfrak{a}_{T,\bullet}$, and let 
    \begin{equation*}
        \pi_{0,T} \colon X_{0,T} = \mathop{\mathrm{Spec}_T} \left( (\pi_T)_{*} \mathrm{gr}_{\mathfrak{a}_T}(\mathscr{O}_{X_T}) \right) \to T, 
    \end{equation*}
    so that $X_{0,T} \simeq X_0 \times_S T$. Then $X_{0,T} \to T$ is of finite type by Lemma \ref{graded_ring_of_val_on_family_of_Kol_model}, and there is a divisor $\Delta_{0,T}$ on $X_{0,T}$ induced by $\Delta_T$ such that $\pi_{T} \colon (X_{0,T}, \Delta_{0,T})$ is a locally stable family of log Fano cone singularities, and the fiber $(X_{0,t}, \Delta_{0,t})$ is K-semistable for all $t \in T$ since it is the degeneration induced by $v^{\mathrm{m}}_t$. 

    Now we prove (3): Take $g \colon T \to S$ to be a resolution of singularities, so that $g$ is surjective. Assume $\mathrm{gr}_{\mathfrak{a}_T}(\mathscr{O}_{X_T})$ is generated in degrees $\lambda_1, \ldots, \lambda_N$; in other words, the canonical map 
    \begin{equation*}
        \mathop{\mathrm{Sym}} \bigoplus_{j=1}^{N}\mathfrak{a}_{T,\lambda_j}/\mathfrak{a}_{T,>\lambda_j} \to \mathrm{gr}_{\mathfrak{a}_T}(\mathscr{O}_{X_T})
    \end{equation*}
    is surjective. If $t \in T$ and $s = g(t) \in S$, then 
    \begin{equation*}
        \mathrm{gr}_{\mathfrak{a}_t}(\mathscr{O}_{X_t}) = \mathrm{gr}_{\mathfrak{a}_T}(\mathscr{O}_{X_T}) \otimes_{\mathscr{O}_T} \kappa(t) = \mathrm{gr}_{\mathfrak{a}}(\mathscr{O}_X) \otimes_{\mathscr{O}_S} \kappa(t) = \mathrm{gr}_{\mathfrak{a}_s}(\mathscr{O}_{X_s}) \otimes_{\kappa(s)} \kappa(t)
    \end{equation*}
    is generated in degrees $\lambda_j$, hence $\mathrm{gr}_{\mathfrak{a}_s}(\mathscr{O}_{X_s})$ is also generated in degrees $\lambda_j$. This holds for all $s \in S$ since $g \colon T \to S$ is surjective, hence $\mathrm{gr}_{\mathfrak{a}}(\mathscr{O}_X)$
    is generated in degrees $\lambda_j$ by Nakayama's lemma, as each graded piece $\mathfrak{a}_{\lambda}/\mathfrak{a}_{>\lambda}$ is a coherent $\mathscr{O}_S$-module. That is, $\pi_0 \colon X_0 = \mathop{\mathrm{Spec}_S}(\pi_{*}\mathrm{gr}_{\mathfrak{a}}(\mathscr{O}_X)) \to S$ is of finite type. 

    Let $M \subset \mathbb{R}$ be the subgroup generated by all $\lambda$ with $\mathfrak{a}_{\lambda}/\mathfrak{a}_{>\lambda} \neq 0$; it is the value group of $v^{\mathrm{m}}_s$ for any $s \in S$. Then $M$ is a free abelian group of finite rank $r$, and the torus $\mathbb{T} = \mathop{\mathrm{Spec}_S} \mathscr{O}_S[M] \simeq (\mathbb{G}_{\mathrm{m},S})^r$ acts on $X_0$ over $S$. The fixed locus is given by 
    \begin{equation*}
        \pi_{*}\mathrm{gr}_{\mathfrak{a}}(\mathscr{O}_X) \to \pi_{*}(\mathfrak{a}_0/\mathfrak{a}_{>0}) \simeq \mathscr{O}_S, 
    \end{equation*}
    so it is a section $x_0 \colon S \to X_0$. Since $\mathrm{gr}_{\mathfrak{a}}(\mathscr{O}_X)$ is finitely generated, the elements $\lambda \in M$ with $\mathfrak{a}_{\lambda}/\mathfrak{a}_{>\lambda} \neq 0$ span a rational polyhedral cone $\omega \subset M_{\mathbb{R}}$. Let $\xi_0 \in N_{\mathbb{R}} = \mathrm{Hom}_{\mathbb{Z}}(M, \mathbb{R})$ be the inclusion $M \hookrightarrow \mathbb{R}$, then $\xi_0$ is positive on $\omega \smallsetminus \{0\}$, so there exists $\xi \in N = \mathrm{Hom}_{\mathbb{Z}}(M, \mathbb{Z})$ that is positive on $\omega \smallsetminus \{0\}$. 
    
    To prove (4), we consider $T = \mathop{\mathrm{Spec}}(A)$ where $A$ is a DVR, and will descend the divisors $\Delta_{0,T}$ to $X_0$ as in Corollary \ref{find_ideal_over_semi-normal}, using Lemma \ref{valuative_criterion_section} and the representability of relative Mumford divisors (see \cite[Thm.\ 4.76]{Kol_fam}). We need a projective family to have the representability. Choose $\xi \in N$ as above, and let 
    \begin{equation*}
        \overline{X}_0 \coloneqq \mathop{\mathrm{Proj}_S} \pi_{*}\mathrm{gr}_{\mathfrak{a}}(\mathscr{O}_X)[u]
    \end{equation*}
    where $\pi_{*}\mathrm{gr}_{\mathfrak{a}}(\mathscr{O}_X)[u]$ is $\mathbb{N}$-graded with $u$ in degree $1$ and $\pi_{*}(\mathfrak{a}_{\lambda}/\mathfrak{a}_{>\lambda})$ in degree $\langle \xi, \lambda \rangle$. Then the formation of $\overline{X}_0$ commutes with base change. Thus, by Lemma \ref{compactify_log_Fano_cone}, 
    \begin{equation*}
        \bar{\pi}_{0,T} \colon (\overline{X}_{0,T} = \overline{X}_0 \times_S T, \overline{\Delta}_{0,T} + V_T) \to T
    \end{equation*}
    is a locally stable family of projective plt pairs, where $X_{0, T} \subset \overline{X}_{0,T}$ is an open subscheme, $V_T$ is the complement, and $\overline{\Delta}_{0,T}$ is the closure of $\Delta_{0,T}$. If we write $\Delta = \sum c_D D$, then $\overline{\Delta}_{0,T} = \sum c_D \overline{D}_{0,T}$, where $\overline{D}_{0,T} \subset \overline{X}_{0,T}$ is the closure of the divisorial part $D_{0,T} \subset X_{0,T}$ of the closed subscheme defined by initial ideal of $D_T \subset X_T$; see Remark \ref{initial_ideal}. Then each $\overline{D}_{0,T}$ is a relative Mumford divisor on $\overline{X}_{0,T}/T$, giving a morphism 
    \begin{equation*}
        \tilde{g}_D \colon T \to \mathrm{MDiv}
    \end{equation*}
    that lifts $g \colon T \to S$, where $\mathrm{MDiv}$ is the $S$-scheme representing relative Mumford divisors on $\overline{X}_0/S$ of a suitable degree in \cite[Thm.\ 4.76]{Kol_fam}. On the underlying sets, the map $\tilde{g}_D$ lifts $s \in S$ to the point that represents $\overline{D}_{0,s} \subset \overline{X}_{0,s}$. Thus by Lemma \ref{valuative_criterion_section}, there is a relative Mumford divisors $\overline{D}_0$ on $\overline{X}_0/S$ whose base change is $\overline{D}_{T,0}$ for any $g \colon T \to S$ as above. Let $\overline{\Delta}_0 = \sum c_D \overline{D}_0$, and $\Delta_0 = \overline{\Delta}_0|_{X_0}$. Then 
    \begin{equation*}
        \pi_0 \colon (X_0, \Delta_0) \to S
    \end{equation*}
    is a locally stable family of log Fano cone singularities since its base change to any DVR is so, by \cite[Def-Thm.\ 4.7]{Kol_fam}. The fiber $(X_{0,s}, \Delta_{0,s})$ is K-semistable (with the polarization $\xi_0 \colon M \hookrightarrow \mathbb{R}$) since it is the degeneration of $(X, \Delta)$ induced by $v^{\mathrm{m}}_s$. 
\end{proof}

\begin{cor} \label{representability}
    Let $S$ be a reduced scheme, and $\pi \colon (X, \Delta) \to S$ with $x \in X(S)$ be a locally stable family of klt singularities. Then there exists a locally closed stratification $\bigsqcup_{i} S_{i} \to S$ satisfying the following: if $g \colon T \to S$ is a morphism where $T$ is a semi-normal scheme, then the base change $(X_T, \Delta_T) \to T$ with $x_T \in X_T(T)$ admits a degeneration to a locally stable family of K-semistable log Fano cone singularities if and only if $g$ factors through some $S^{\mathit{sn}}_i \to S$, where $S^{\mathit{sn}}_i$ is the semi-normalization of $S_i$. 
\end{cor}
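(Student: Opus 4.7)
The plan is to stratify $S$ by the value of $\widehat{\mathrm{vol}}$ and then apply Corollary~\ref{K-semistable_degen_over_semi-normal} on each stratum. By the constructibility \cite{Xu_qm} and lower semi-continuity \cite{BL_lsc} of $\widehat{\mathrm{vol}}$, the function $V \colon s \mapsto \widehat{\mathrm{vol}}(x_s; X_s, \Delta_s)$ takes only finitely many values locally on $S$, and each level set $S_i \coloneqq V^{-1}(v_i)$ is locally closed. I would endow each $S_i$ with its reduced induced scheme structure and write $S_i^{\mathit{sn}}$ for its semi-normalization.

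For the ``if'' direction, assume $g \colon T \to S$ factors as $T \to S_i^{\mathit{sn}} \to S$ for some $i$. The base change of $\pi$ to the semi-normal scheme $S_i^{\mathit{sn}}$ is a locally stable family of klt singularities with constant local volume $v_i$, so Corollary~\ref{K-semistable_degen_over_semi-normal} supplies a locally stable family of K-semistable log Fano cone singularities degenerating it over $S_i^{\mathit{sn}}$. Pulling this family back along $T \to S_i^{\mathit{sn}}$ produces the required equivariant degeneration over $T$.

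For the ``only if'' direction, suppose $(X_T, \Delta_T) \to T$ admits an equivariant degeneration to a locally stable family of K-semistable log Fano cone singularities $(X_{T,0}, \Delta_{T,0}) \to T$. The uniqueness part of the Stable Degeneration Conjecture forces $(X_{T,0,t}, \Delta_{T,0,t})$ to be the K-semistable degeneration of $x_t \in (X_t, \Delta_t)$ induced by the minimizer of the normalized volume, hence
\begin{equation*}
    V(g(t)) = \widehat{\mathrm{vol}}(x_t; X_t, \Delta_t) = \widehat{\mathrm{vol}}(x_{T,0,t}; X_{T,0,t}, \Delta_{T,0,t})
\end{equation*}
for every $t \in T$. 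The family $(X_{T,0}, \Delta_{T,0}) \to T$ carries a fixed torus with a fixed Reeb vector $\xi$ whose valuation $\mathrm{wt}_\xi$ minimizes the normalized volume on each fiber, so the right-hand side equals $\widehat{\mathrm{vol}}_{X_{T,0,t}, \Delta_{T,0,t}}(\mathrm{wt}_\xi)$, a quantity determined by the torus-character data and hence locally constant on $T$. Thus $V \circ g$ is locally constant; restricting to a connected component of $T$ places the set-theoretic image of $g$ in a single $S_i$, and since $T$ and $S_i$ are reduced this upgrades to a scheme-theoretic factorization $g \colon T \to S_i$. Finally, the projection $(T \times_{S_i} S_i^{\mathit{sn}})_{\mathrm{red}} \to T$ is finite and bijective between reduced schemes, so semi-normality of $T$ forces it to be an isomorphism, yielding the desired lift $T \to S_i^{\mathit{sn}}$.

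The step I expect to need the most care is the local constancy of $\widehat{\mathrm{vol}}$ in a locally stable family of K-semistable log Fano cone singularities. Once one fixes the convention that such a family comes with a global torus action and Reeb vector $\xi$, this reduces to the fact that $A_{X_{T,0,t},\Delta_{T,0,t}}(\mathrm{wt}_\xi)$ and $\mathrm{vol}(\mathrm{wt}_\xi)$ are determined by the character data on the coordinate ring and on the log canonical class, both of which are locally constant in the family.
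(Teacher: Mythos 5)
Your proposal is correct and takes essentially the same approach as the paper: stratify $S$ by level sets of $s \mapsto \widehat{\mathrm{vol}}(x_s; X_s, \Delta_s)$ (constructible and lower semi-continuous, hence a locally closed stratification) and apply Corollary~\ref{K-semistable_degen_over_semi-normal} on each stratum. The paper's own proof is quite terse and simply asserts the ``only if'' direction (``if the family degenerates to a family of K-semistable log Fano cone singularities, then the function is locally constant''); you supply the justification via the torus-character/Reeb vector argument and also spell out the lift from a factorization through $S_i$ to one through $S_i^{\mathit{sn}}$, both of which the paper leaves implicit.
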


Here we say a locally stable family of klt singularities admits a degeneration to a locally stable family of K-semistable log Fano cone singularities if the conclusions of Corollary \ref{K-semistable_degen_over_semi-normal} hold. 

\begin{proof}
    Suppose $\pi \colon (X, \Delta) \to S$ with $x \in X(S)$ degenerates to a locally stable family $(X_{0}, \Delta_{0}) \to S$ of K-semistable log Fano cone singularities, then 
    \begin{equation*}
        \widehat{\mathrm{vol}}(x_s; X_s, \Delta_s) = \widehat{\mathrm{vol}}(x_{0,s}; X_{0,s}, \Delta_{0,s})
    \end{equation*}
    for all $s \in S$ by \cite[Lem.\ 4.10]{LX_stability}, where $x_0 \in X_0(S)$ is the vertex. Hence $s \mapsto \widehat{\mathrm{vol}}(x_{s}; X_{s}, \Delta_{s})$ is locally constant on $S$ by Lemma \ref{family_of_log_Fano_cone_constant_vol}. The converse holds when $S$ is semi-normal by Corollary \ref{K-semistable_degen_over_semi-normal}. 
    
    In general, the function 
    \begin{equation*}
        s \mapsto \widehat{\mathrm{vol}}(x_{s}; X_{s}, \Delta_{s})
    \end{equation*}
    is constructible on $S$ by \cite[Thm.\ 1.3]{Xu_qm}, and lower semi-continuous on $S$ by \cite[Thm.\ 1.1]{BL_lsc}, and its formation commutes with arbitrary base change since the local volume is preserved by field extensions. Then it suffices to take each $S_i \subset S$ to be a level set of this function, which is a locally closed subscheme with the reduced scheme structure. 
\end{proof}

\subsection{Example: Unibranch plane curves} Let $\Bbbk$ be an algebraically closed field of characteristic zero. Let $C \subset \mathbb{A}^2 = \mathop{\mathrm{Spec}} \Bbbk[x, y]$ be a reduced plane curve, i.e., a reduced closed subscheme of pure dimension one. Assume $C$ is unibranch at the origin $0 \in \mathbb{A}^2$ (in particular, $0 \in C$). We are interested in the local volume 
\begin{equation*}
    \widehat{\mathrm{vol}}(0; \mathbb{A}^2, \lambda C)
\end{equation*}
for coefficients $0 \leq \lambda < \mathrm{lct}_0(\mathbb{A}^2; C)$. 

By \cite[Thm.\ 1.1]{LX_stability}, we need to find a quasi-monomial valuation $v \in \mathrm{Val}^{\mathrm{qm}}_{\mathbb{A}^2, 0}$ inducing a degeneration of $0 \in (\mathbb{A}^2, \lambda C)$ to a K-semistable log Fano cone singularity. We first consider the problem for analytic germs of curves. 

\begin{exmp} \label{example_analytic_curve_germ}
     Let $\widehat{C} \subset \widehat{\mathbb{A}}^2 = \mathop{\mathrm{Spec}} \Bbbk[\![x, y]\!]$ be an integral closed subscheme of dimension $1$. As in \cite{Zar_curve}, we may assume that $\widehat{C}$ is given by a parametrization 
     \begin{equation*}
         x = t^a, \quad y = \varphi(t) = t^b + \sum_{i > b} c_i t^i \in \Bbbk[\![t]\!]
     \end{equation*}
     such that $b > a > 1$ and $a \nmid b$. More precisely, $\widehat{C}$ is defined by the kernel of the map 
     \begin{equation*}
         \Bbbk[\![x,y]\!] \to \Bbbk[\![t]\!], \quad (x, y) \mapsto (t^a, \varphi(t)). 
     \end{equation*}
     We claim that 
     \begin{equation*}
         \mathrm{lct}(\widehat{C}) \coloneqq \mathrm{lct}(\widehat{\mathbb{A}}^2; \widehat{C}) = \frac{1}{a} + \frac{1}{b}. 
     \end{equation*}
     The function $f \in \Bbbk[\![x,y]\!]$ defining $\widehat{C}$ can be written as $f = \prod_{j=0}^{a-1} (y - \varphi(\zeta^jx^{1/a}))$, where $\zeta$ is a primitive $a$-th root of unity. Then from the Newton polygon of $f$ we get $\mathrm{lct}(\widehat{C}) \leq \frac{1}{a} + \frac{1}{b}$, by \cite[Thm.\ 6.40]{KSC_rational}. In the following, we will show that $f$ degenerates to $f_0 = (y^{a/d} - x^{b/d})^d$, where $d = \mathrm{gcd}(a, b)$. Hence, if $\widehat{C}_0$ is defined by $y^{a/d} - x^{b/d}$, then 
     \begin{equation*}
         \mathrm{lct}(\widehat{C}) \geq \mathrm{lct}(d \widehat{C}_0) = \frac{1}{d}\left( \frac{1}{a/d} + \frac{1}{b/d} \right) = \frac{1}{a} + \frac{1}{b}
     \end{equation*}
     by the lower semi-continuity of lct and \cite[Prop.\ 6.39]{KSC_rational}. 

     Let $v_{\xi} \in \mathrm{Val}_{\widehat{\mathbb{A}}^2}$ denote the monomial valuation given by weight $\xi = (\mu, \nu) \in \mathbb{R}^2_{>0}$. Then 
     \begin{equation*}
         \mathrm{gr}_{v_{\alpha}}(\Bbbk[\![x,y]\!]) = \Bbbk[X, Y], 
     \end{equation*}
     where we denote the initial terms of $x, y \in \Bbbk[\![x,y]\!]$ by $X, Y \in \Bbbk[X, Y]$, respectively, and the ring $\Bbbk[X, Y]$ is graded with $\deg(X) = \mu$ and $\deg(Y) = \nu$. We will show that $v_{\xi}$ induce K-semistable degenerations of $(\widehat{\mathbb{A}}^2, \lambda \widehat{C})$, for an appropriate weight $\xi$ depending on the coefficient $\lambda$. There are two cases: 

     (1). Let $d = \mathrm{gcd}(a, b)$, $a_0 = a/d$, $b_0 = b/d$, and $\xi = (a_0, b_0)$. Then the initial term of $f$ is 
     \begin{equation*}
         f_0 = \prod_{j=0}^{a-1} (Y - \zeta^{bj}X^{b/a}) = (Y^{a_0} - X^{b_0})^d
     \end{equation*}
     where $\zeta \in \Bbbk$ is a primitive $a$-th root of unity. Let $C_0 \subset \mathbb{A}^2$ be the curve defined by $Y^{a_0} - X^{b_0}$. Then $v_{\xi}$ induces a $\mathbb{G}_{\mathrm{m}}$-equivariant degeneration of $(\widehat{\mathbb{A}}^2, \lambda \widehat{C})$ to $(\mathbb{A}^2, \lambda d C_0)$, where $\mathbb{G}_{\mathrm{m}}$ acts on $\mathbb{A}^2$ with weight $\xi$, such that $C_0 \subset \mathbb{A}^2$ is invariant. In this case, the polarized log Fano cone singularity $(\mathbb{A}^2, \lambda d C_0; \xi)$ is an (orbifold) affine cone over the log Fano pair 
     \begin{equation*}
         (E, \Delta_E) = \left( \mathbb{P}^1, \left( 1 - \frac{1}{a_0} \right) \{0\} + \left( 1 - \frac{1}{b_0} \right) \{\infty\} + \lambda d \{1\} \right)
     \end{equation*}
     see \cite[Ex.\ 1.7]{Li_tangent}. By Fujita--Li's valuative criterion for K-stability (see \cite[Thm.\ 4.13]{Xu_K-Stability_Book}), it is easy to see that $(E, \Delta_E)$ is K-semistable if and only if $\lambda d \geq \frac{1}{a_0} - \frac{1}{b_0}$, that is, 
     \begin{equation*}
         \lambda \geq \frac{1}{a} - \frac{1}{b}. 
     \end{equation*}
     Thus $(\mathbb{A}^2, \lambda d C_0; \xi)$ is K-semistable if and only if $\lambda \geq \frac{1}{a} - \frac{1}{b}$. We can also compute 
     \begin{equation*}
         \widehat{\mathrm{vol}}_{\widehat{\mathbb{A}}^2, \lambda \widehat{C}}(v_{\xi}) = ab\left( \frac{1}{a} + \frac{1}{b} - \lambda \right)^2. 
     \end{equation*}

     (2). Suppose $0 \leq \lambda < \frac{1}{a} - \frac{1}{b}$. Let $\xi = (\mu, 1)$ where 
     \begin{equation*}
         \mu = 1 - \lambda a > a/b
     \end{equation*}
     Then $v_{\xi}$ induces a degeneration of $(\widehat{\mathbb{A}}^2, \lambda \widehat{C})$ to $(\mathbb{A}^2, \lambda aL; \xi)$, where $L \subset \mathbb{A}^2$ is the line $Y = 0$. The pair $(\mathbb{A}^2, \lambda aL)$ is toric, and it is easy to see the toric valuation of weight $\xi = (\mu, 1)$ minimizes the normalized volume among all toric valuations. Hence $(\mathbb{A}^2, \lambda aL; \xi)$ is K-semistable by \cite[Thm.\ 3.5, Prop.\ 4.21]{LX_stability}. In this case, we have 
     \begin{equation*}
         \widehat{\mathrm{vol}}_{\widehat{\mathbb{A}}^2, \lambda \widehat{C}}(v_{\xi}) = 4(1 - \lambda a). 
     \end{equation*}
\end{exmp}

Note that the above results only depend on the pair of integers $(a, b)$. They are the first two terms of the \emph{Puiseux characteristic} of the germ $\widehat{C} \subset \widehat{\mathbb{A}}^2$ (see \cite[Def.\ II.3.2]{Zar_curve}), and are independent of the choice of analytic local coordinates. 

\begin{lem} \label{local_vol_of_unibranch_curve}
    Let $C \subset \mathbb{A}^2$ be a reduced plane curve through the origin $0 \in \mathbb{A}^2$. Suppose $C$ is unibranch and singular at $0$, with the Puiseux characteristic $(a, b, \ldots)$. Then $\mathrm{lct}_0(\mathbb{A}^2; C) = \frac{1}{a} + \frac{1}{b}$, and the following hold: 
    \begin{enumerate}[label=\emph{(\arabic*)}, nosep]
        \item If $\frac{1}{a} - \frac{1}{b} \leq \lambda < \frac{1}{a} + \frac{1}{b}$, then 
        \begin{equation*}
            \widehat{\mathrm{vol}}(0; \mathbb{A}^2, \lambda C) = ab\left( \frac{1}{a} + \frac{1}{b} - \lambda \right)^2
        \end{equation*}
        and the K-semistable degeneration of $0 \in (\mathbb{A}^2, \lambda C)$ is $(\mathbb{A}^2, \lambda d C_0; \xi)$, where $d = \mathrm{gcd}(a, b)$ and $C_0$ is the curve $Y^{a/d} - X^{b/d} = 0$, with the polarization $\xi = (a/d, b/d)$; 
        \item if $0 \leq \lambda < \frac{1}{a} - \frac{1}{b}$, then 
        \begin{equation*}
            \widehat{\mathrm{vol}}(0; \mathbb{A}^2, \lambda C) = 4(1 - \lambda a)
        \end{equation*}
        and the K-semistable degeneration of $0 \in (\mathbb{A}^2, \lambda C)$ is $(\mathbb{A}^2, \lambda a L; \xi)$, where $L$ is the line $Y = 0$, with the polarization $\xi = (1 - \lambda a, 1)$
    \end{enumerate}
    If $C$ is smooth at $0$, then \emph{(2)} holds for all $0 \leq \lambda < 1$ with $a = 1$. 
\end{lem}

\begin{proof}
    Let $\widehat{C} \subset \widehat{\mathbb{A}}^2 = \mathop{\mathrm{Spec}} \Bbbk[\![x, y]\!]$ denote the completion at $0$. By Example \ref{example_analytic_curve_germ}, we have 
    \begin{equation*}
        \mathrm{lct}_0(\mathbb{A}^2, C) = \mathrm{lct}(\widehat{\mathbb{A}}^2, \widehat{C}) = \frac{1}{a} + \frac{1}{b}, 
    \end{equation*}
    since a pair is klt at a point if and only if its completion at that point is klt (see \cite[\S 2.16]{Kollar_singularity}). There is a quasi-monomial valuation $\hat{v} \in \mathrm{Val}_{\widehat{\mathbb{A}}^2, 0}$ inducing a degeneration of $0 \in (\widehat{\mathbb{A}}^2, \lambda \widehat{C})$ to a K-semistable log Fano cone singularity $(\mathbb{A}^2, \Delta_0; \xi)$ as in Example \ref{example_analytic_curve_germ}. Let $v$ denote the restriction of $\hat{v}$ on $\Bbbk[x, y] \subset \Bbbk[\![x, y]\!]$. Then $v$ is a quasi-monomial valuation centered at $0 \in \mathbb{A}^2$ by \cite[Lem.\ 3.10]{JM}, and the degeneration of $0 \in (\mathbb{A}^2, \lambda C)$ induced by $v$ is also $(\mathbb{A}^2, \Delta_0; \xi)$. 
    
    Hence $v$ is the minimizer of the normalized volume of $0 \in (\mathbb{A}^2, \lambda C)$ by \cite[Thm.\ 1.3]{LX_stability}. That is, 
    \begin{equation*}
        \widehat{\mathrm{vol}}(0; \mathbb{A}^2, \lambda C) = \widehat{\mathrm{vol}}_{\mathbb{A}^2, \lambda C}(v) = \widehat{\mathrm{vol}}_{\widehat{\mathbb{A}}^2, \lambda \widehat{C}}(\hat{v}) = \left\{ \begin{array}{ll}
            ab\left( \frac{1}{a} + \frac{1}{b} - \lambda \right)^2 & \text{if}\ \frac{1}{a} - \frac{1}{b} \leq \lambda < \frac{1}{a} + \frac{1}{b} \\
            4(1 - \lambda a) & \text{if}\ 0 \leq \lambda < \frac{1}{a} - \frac{1}{b}
        \end{array} \right.
    \end{equation*}
    where the two cases $\frac{1}{a} - \frac{1}{b} \leq \lambda < \frac{1}{a} + \frac{1}{b}$ and $0 \leq \lambda < \frac{1}{a} - \frac{1}{b}$ correspond to the two cases in Example \ref{example_analytic_curve_germ}, and the explicit form of the K-semistable degeneration $(\mathbb{A}^2, \Delta_0; \xi)$ is given there. 
    
    If $C$ is smooth at $0$, then we can choose analytic local coordinates such that $\widehat{C}$ is the line $y = 0$. Thus the second case gives the K-semistable degeneration for all $0 \leq \lambda < 1$ with $a = 1$. 
\end{proof}

We say two plane curves $C, C' \subset \mathbb{A}^2$ that are unibranch at $0 \in \mathbb{A}^2$ are \emph{equisingular at $0$} if they have the same Puiseux characteristics. See \cite{Zar_equisingular_I} and \cite{Zar_equisingular_III} for other equivalent definitions. In particular, if $\Bbbk = \mathbb{C}$, then $C$ and $C'$ are equisingular at $0$ if and only if they have the same topological type (see \cite[Def.\ I.1.2]{Zar_curve}). 

\begin{cor} \label{family_of_equisingular_curve_branch}
    Let $S$ be a reduced connected scheme of finite type over an algebraically closed field $\Bbbk$ of characteristic $0$. Let $C \subset \mathbb{A}^2_S$ be a closed subscheme that is flat of pure relative dimension $1$ over $S$. Let $0_S \subset \mathbb{A}^2_S$ denote the origin. Suppose $C_s \subset \mathbb{A}^2_{\Bbbk}$ is a reduced plane curve that is unibranch at $0$ for all $s \in S(\Bbbk)$, and $C_s, C_{s'}$ are equisingular at $0$ for all $s, s' \in S(\Bbbk)$. Then the following hold: 
    \begin{enumerate}[label=\emph{(\arabic*)}, nosep]
        \item The function $s \mapsto \mathrm{lct}_{0_s}(\mathbb{A}^2_s; C_s)$ is constant on $S$. 
        \item Suppose $0 \leq \lambda < \mathrm{lct}_{0_s}(\mathbb{A}^2_s; C_s)$ for all $s \in S$. Then $(\mathbb{A}^2_S, \lambda C) \to S$ is a locally stable family of klt singularities at $0_S$, and the function 
        \begin{equation*}
            s \mapsto \widehat{\mathrm{vol}}(0_s; \mathbb{A}^2_s, \lambda C_s)
        \end{equation*}
        is constant on $S$. 
        \item Assume $S$ is semi-normal. Then $(\mathbb{A}^2_S, \lambda C) \to S$ admits a degeneration to a locally stable family of K-semistable log Fano cone singularities $(X_0, \Delta_0) \to S$. Moreover, all fibers of $(X_0, \Delta_0) \to S$ over $\Bbbk$-points of $S$ are isomorphic. 
    \end{enumerate}
\end{cor}

\begin{proof}
    (1). Since all $C_s$ are equisingular at $0$ for $s \in S(\Bbbk)$, they have the same Puiseux characteristics $(a, b, \ldots)$. By Lemma \ref{local_vol_of_unibranch_curve}, the function $s \mapsto \mathrm{lct}_{0_s}(\mathbb{A}^2_s; C_s)$ is constant for $s \in S(\Bbbk)$, of value $\frac{1}{a} + \frac{1}{b}$. Since $S$ is of finite type over $\Bbbk$, the set of $\Bbbk$-points $S(\Bbbk)$ is dense in $S$, and every non-empty closed subset of $S$ contains a $\Bbbk$-point. Thus, by the lower semi-continuity of lct, $s \mapsto \mathrm{lct}_{0_s}(\mathbb{A}^2_s; C_s)$ is constant on $S$. 

    (2). Let $0 \leq \lambda < \frac{1}{a} + \frac{1}{b}$. Note that $\mathbb{A}^2_S \to S$ is a smooth family, and $C \subset \mathbb{A}^2_S$ is a relative Cartier divisor over $S$. Thus $(\mathbb{A}^2_S, \lambda C) \to S$ is a locally stable family of klt singularities at $0_S$ since every fiber $(\mathbb{A}^2_s, \lambda C_s)$ is klt at $0_s$ by (1), and $K_{\mathbb{A}^2_S/S} + \lambda C$ is $\mathbb{R}$-Cartier (see \cite[Def.\ 4.7]{Kol_fam}). The function 
    \begin{equation*}
        s \mapsto \widehat{\mathrm{vol}}(0_s; \mathbb{A}^2_s, \lambda C_s)
    \end{equation*}
    is lower semi-continuous by \cite[Thm.\ 1.1]{BL_lsc}, and is constant on $S(\Bbbk)$ with the value given by Lemma \ref{local_vol_of_unibranch_curve}. Hence it is a constant function on $S$. 

    (3). The existence of the K-semistable degeneration $(X_0, \Delta_0) \to S$ follows from Corollary \ref{K-semistable_degen_over_semi-normal}, since the family $(\mathbb{A}^2_S, \lambda C) \to S$ has constant local volume at $0_S$. Moreover, the fiber $(X_{0, s}, \Delta_{0, s})$ at $s \in S(\Bbbk)$ is the K-semistable degeneration of $0 \in (\mathbb{A}^2_{\Bbbk}, \lambda C_s)$. They are all isomorphic by Lemma \ref{local_vol_of_unibranch_curve}. 
\end{proof}

\appendix

\section{A valuative criterion for sections}

\begin{lem} \label{valuative_criterion_section}
    Let $S$ be a locally Noetherian semi-normal scheme, and $\pi \colon Z \to S$ be a morphism that is separated and of finite type. Suppose $|\sigma| \colon |S| \to |Z|$ is a map between underlying sets that is a section of the map $|\pi| \colon |Z| \to |S|$, such that for every morphism $g \colon T = \mathop{\mathrm{Spec}}(A) \to S$, where $A$ is a DVR, there exists a morphism $f \colon T \to Z$ such that $\pi \circ f = g$ and $|f| = |\sigma| \circ |g|$. Then there is a section $\sigma \colon S \to Z$ of $\pi$ whose underlying map between sets is $|\sigma|$. 

    Moreover, if $S$ is locally essentially of finite type over a field $k$, then it suffices to consider $A$ that are also essentially of finite type over $k$. 
\end{lem}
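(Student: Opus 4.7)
The plan is to construct a closed subscheme $W \subset Z$ such that $\pi|_W \colon W \to S$ is a finite birational morphism inducing isomorphisms on residue fields, and then invoke the semi-normality of $S$ to conclude that $\pi|_W$ is an isomorphism; the desired section $\sigma$ will then be $(\pi|_W)^{-1}$ composed with $W \hookrightarrow Z$. The problem is local on $S$, so I may assume $S = \mathop{\mathrm{Spec}}(R)$ is affine and Noetherian.

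First, I would verify two preliminary properties of the set map $|\sigma|$. For each $s \in S$, applying the hypothesis to the DVR $A = \kappa(s)[\![t]\!]$ with the morphism $g \colon \mathop{\mathrm{Spec}}(A) \to S$ induced by $R \twoheadrightarrow \kappa(s) \hookrightarrow A$ produces a local ring map $\mathscr{O}_{Z, |\sigma|(s)} \to A$ over $\mathscr{O}_{S, s} \to A$; this yields a retraction of the residue field extension $\kappa(s) \to \kappa(|\sigma|(s))$, forcing it to be an isomorphism. Secondly, for any specialization $s \leadsto s'$ in $S$, there exist a DVR $A$ and a morphism $g \colon \mathop{\mathrm{Spec}}(A) \to S$ realizing this specialization, and the hypothesis applied to $g$ yields $|\sigma|(s) \leadsto |\sigma|(s')$ in $Z$. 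Consequently, if $\eta_1, \ldots, \eta_k$ are the generic points of $S$ and $V_j \coloneqq \overline{\{|\sigma|(\eta_j)\}} \subset Z$, then $W \coloneqq \bigcup_j V_j$ (with the reduced induced structure) coincides with the reduced closure of $|\sigma|(|S|)$.

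The central step is to show that $\pi|_W$ has singleton fibers, i.e.\ $W_s = \{|\sigma|(s)\}$ for every $s \in S$. Any $w \in W_s$ lies in some $V_j$, hence $s \in \overline{\{\eta_j\}}$. When $s = \eta_j$ is generic, the fiber $V_j \cap \pi^{-1}(\eta_j)$ is the generic fiber of the birational morphism $V_j \to \overline{\{\eta_j\}}$ (birational because $\kappa(|\sigma|(\eta_j)) = \kappa(\eta_j)$ by the first step), so it equals $\{|\sigma|(\eta_j)\}$. When $s$ is a proper specialization of $\eta_j$, suppose $w \ne |\sigma|(s)$; pick a DVR $A$ dominating $\mathscr{O}_{V_j, w}$. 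The tautological morphism $f' \colon \mathop{\mathrm{Spec}}(A) \to V_j \hookrightarrow Z$ sends the closed point to $w$, while applying the hypothesis to $g = \pi \circ f'$ yields $f \colon \mathop{\mathrm{Spec}}(A) \to Z$ sending the closed point to $|\sigma|(s)$. The generic fibers of $f$ and $f'$ both send $\mathop{\mathrm{Spec}}(\mathop{\mathrm{Frac}} A)$ to $|\sigma|(\eta_j)$ and extend the same field map $\kappa(\eta_j) \to \mathop{\mathrm{Frac}}(A)$ determined by $g$; since $\kappa(|\sigma|(\eta_j)) = \kappa(\eta_j)$, they coincide, and separatedness of $Z \to S$ forces $f = f'$, contradicting $|\sigma|(s) \ne w$. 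I expect this step, which delicately combines the triviality of residue fields with the uniqueness clause of the valuative criterion of separatedness, to be the main technical obstacle.

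Properness of $\pi|_W$ then follows from the valuative criterion: any compatible diagram with DVR $A$ has $\mathop{\mathrm{Spec}}(\mathop{\mathrm{Frac}} A) \to W$ landing in the singleton fiber $W_{g(\eta_A)} = \{|\sigma|(g(\eta_A))\}$, and the hypothesis applied to $g$ produces a lift $\mathop{\mathrm{Spec}}(A) \to Z$ which factors through the reduced subscheme $W$ and matches the given generic fiber by triviality of residue fields. Thus $\pi|_W$ is proper and quasi-finite, hence finite. Over each generic point $\eta$ of $S$, the reduced fiber $W_\eta$ consists of a single point with residue field $\kappa(\eta)$, hence $W_\eta = \mathop{\mathrm{Spec}}(\kappa(\eta))$, so $\pi|_W$ is birational. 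By the characterization of semi-normality, a finite surjective birational morphism to a semi-normal scheme inducing isomorphisms on residue fields is an isomorphism, yielding the section $\sigma$. For the ``moreover'' clause, every DVR used above can be taken essentially of finite type over $k$ when $S$ is (in particular, $\kappa(s)[\![t]\!]$ can be replaced by $\kappa(s)[t]_{(t)}$, and DVRs realizing specializations or dominating local rings of finite-type schemes admit essentially of finite type choices), and the valuative criterion of properness for morphisms between Noetherian schemes essentially of finite type over $k$ may be tested using such DVRs.
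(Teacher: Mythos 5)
Your proof is correct and follows essentially the same route as the paper's. Both arguments hinge on the same three ideas: (a) $\kappa(s) \xrightarrow{\sim} \kappa(|\sigma|(s))$, established by hitting $s$ with a DVR concentrated at $s$; (b) given two morphisms $\mathop{\mathrm{Spec}}(A) \to Z$ over the same $g$ whose generic points land at the same $\kappa(\eta)$-point, the isomorphism on residue fields pins down the generic fiber and the valuative criterion of separatedness forces equality — you use this to show $W_s$ is a singleton, the paper uses it verbatim to show $|\sigma|(|S|)$ is stable under specialization, and these are the same statement packaged differently (both then give that the reduced closed subscheme $W=S'$ is the set-theoretic image); (c) the resulting reduced finite/proper, bijective, residue-field-preserving morphism to the semi-normal base is an isomorphism. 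The paper is slightly more economical in presentation (it shows $|S'|$ is closed by showing it is stable under specialization and equals $\bigcup_i\overline{\{\eta_i'\}}$, then cites the valuative criterion for properness once), whereas you separate out ``singleton fibers'' and then properness; this is a cosmetic reorganization, not a different method.
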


\begin{proof}
    We may assume $S$ is affine. If $T$ is the spectrum of a DVR, we write $\zeta \in T$ for the generic point, and $t \in T$ for the closed point. For $s \in S$ with residue field $\kappa = \kappa(s)$, let $A$ be a DVR with residue field $\kappa$, say, $A = \kappa[\varpi]_{(\varpi)}$, and consider the map 
    \begin{equation*}
        T = \mathop{\mathrm{Spec}}(A) \to \mathop{\mathrm{Spec}}(\kappa) \xrightarrow{s} S. 
    \end{equation*}
    Then we conclude that $|\sigma|(s) \in |Z_s|$ is a $\kappa$-point of $Z_s$. In particular, it is a closed point of $|Z_s|$. 

    Let $|S'| = |\sigma|(|S|) \subset |Z|$ be the image of $|\sigma|$. Note that $|\sigma| \colon |S| \to |S'|$ is a bijection, and the restriction of $|\pi|$ is the inverse. Suppose $x' \in |S'|$ specializes to $y' \in |Z|$, and $y' \neq x'$. Let $x = \pi(x')$ and $y = \pi(y')$. Since $x' = |\sigma|(x)$ is a closed point in $Z_x$, we have $y \neq x$. Then there exists a DVR $A$ with fraction field $K = \kappa(x)$ and a morphism $f' \colon T = \mathop{\mathrm{Spec}}(A) \to Z$ such that $f'(\zeta) = x'$, and $f'(t) = y'$ (see \cite[Tag 00PH]{stacks-project}), and we may assume that $A$ is essentially of finite type over $k$ if $S$ is so. By assumption, there exists a morphism $f \colon T \to Z$ such that $\pi \circ f = \pi \circ f'$, $f(\zeta) = |\sigma|(x) = x'$, and $f(t) = |\sigma|(y)$. Since $T$ has function field $K = \kappa(x)$, we must have $f|_{\mathop{\mathrm{Spec}}(K)} = f'|_{\mathop{\mathrm{Spec}}(K)}$. Since $\pi$ is separated, we conclude that $f = f'$ by the valuative criterion. Thus $y' = f(t) = |\sigma|(y) \in |S'|$. So $|S'|$ is stable under specialization. 
    
    Let $\eta_1, \ldots, \eta_n \in S$ be the generic points, and $\eta_i' = |\sigma|(\eta_i) \in |S'|$. Then $\bigcup_{i=1}^{n} \overline{\{\eta_i'\}} \subset |S'|$. Conversely, if $x' \in |S'|$, then $x = \pi(x') \in S$ generalizes to some $\eta_i$. Let $A$ be a DVR and $g \colon T = \mathop{\mathrm{Spec}}(A) \to S$ be a morphism such that $g(\zeta) = \eta_i$ and $g(t) = x$. By assumption, there is a lifting $f \colon T \to Z$ of $g$ such that $f(\zeta) = |\sigma|(\eta_i) = \eta_i'$ and $f(t) = |\sigma|(x) = x'$. Hence $x' \in \overline{\{\eta_i'\}}$. Thus we conclude that $\bigcup_{i=1}^{n} \overline{\{\eta_i'\}} = |S'|$. Hence $|S'| \subset |Z|$ is closed. Let $S' \subset Z$ be the corresponding reduced closed subscheme. 

    By the valuative criterion, $\pi|_{S'} \colon S' \to S$ is proper. Moreover, it is bijective and induces isomorphisms on residue fields. Since $S$ is semi-normal, we conclude that $\pi|_{S'} \colon S' \to S$ is an isomorphism. Therefore we get a section $\sigma \colon S \simeq S' \hookrightarrow Z$. 
\end{proof}

\printbibliography

\end{document}